\newcommand{\al}{\alpha}
\newcommand{\SSP}{\mathscr{P}}
\newcommand{\Rect}{\mathsf{Rect}}
\newcommand{\Slope}{\mathsf{Slope}}
\newcommand{\Diag}{\mathsf{Diag}}
\newcommand{\Cyl}{\mathsf{Cyl}}
\DeclareMathOperator{\per}{perimeter}
\newcommand{\VV}{\mathscr{V}}
\newcommand{\II}{\mathscr{I}}
\newcommand{\RR}{\mathscr{R}}
\newcommand{\vv}{\mathbf{v}}
\newcommand{\up}{{\mathrm{up}}}
\newcommand{\down}{{\mathrm{down}}}
\renewcommand{\ss}{{\zeta}}
\newcommand{\sss}{{\ss}}
\newcommand{\rr}{{\rho}}
\newcommand{\JJ}{{\lambda}}
\newcommand{\MM}{{M}}
\newcommand{\tail}{{c}}
\newcommand{\LBRW}{L_{n}^{\mathrm{BRW}}}
\newcommand{\dmn}{\llbracket 0,n\rrbracket^2}
\newcommand{\dmp}{\llbracket0,n-1\rrbracket^2}
\newcommand{\BB}{\mathscr{B}}
\newcommand{\norm}[1]{\left\lVert#1\right\rVert}
\DeclarePairedDelimiter\ceil{\lceil}{\rceil}
\DeclarePairedDelimiter\floor{\lfloor}{\rfloor}
\newcommand{\lb}{\llbracket}
\newcommand{\rb}{\rrbracket}
\newcommand{\wh}{\widehat}
\newcommand{\wt}{\widetilde}
\newcommand{\R}{\mathbb{R}}  
\newcommand{\Z}{\mathbb{Z}}
\newcommand{\N}{\mathbb{N}}
\newcommand{\e}{\varepsilon}
\renewcommand{\d}{\delta}
\renewcommand{\o}{\omega}
\DeclareMathOperator{\polylog}{polylog}
\newcommand{\ls}{\lesssim}
\newcommand{\gs}{\gtrsim}
\newcommand{\E}{\mathbb{E}}
\renewcommand{\P}{\mathbb{P}}
\DeclareMathOperator{\Var}{Var}
\newcommand{\1}{\mathbf{1}}
\newcommand{\Ber}{\mathrm{Bernoulli}}
\newcommand{\Bin}{\mathrm{Binomial}}
\newcommand{\Poi}{\mathrm{Poisson}}
\newcommand{\msf}{\mathsf}
\newcommand{\mrm}{\mathrm}
\newcommand{\cE}{\mathcal{E}}
\newcommand{\cF}{\mathcal{F}}
\newcommand{\cL}{\mathcal{L}}
\newcommand{\sA}{\mathsf{A}}
\newcommand{\sC}{\mathsf{C}}
\newcommand{\sP}{\mathsf{P}}
\numberwithin{equation}{section}
\theoremstyle{plain}
\newtheorem{theorem}{Theorem}[section]
\newtheorem{proposition}[theorem]{Proposition}
\newtheorem{lemma}[theorem]{Lemma}
\newtheorem{maintheorem}{Theorem}
\theoremstyle{remark}
\newtheorem{definition}[theorem]{Definition}
\newtheorem{remark}[theorem]{Remark}
\newtheorem{claim}{Claim}
\title{Last passage percolation in hierarchical environments}
\author{Shirshendu Ganguly \and Victor Ginsburg \and Kyeongsik Nam}
\address{Shirshendu Ganguly\\
Department of Statistics,
University of California, Berkeley, CA, USA} 
\email{sganguly@berkeley.edu}
\address{Victor Ginsburg \orcidlink{0000-0001-9399-6748}\\
Department of Mathematics, University of California, Berkeley, CA, USA}
\email{victor@math.berkeley.edu}
\address{Kyeongsik Nam\\
Department of Mathematical Sciences, KAIST, South Korea}
\email{ksnam@kaist.ac.kr}
\begin{document}

\begin{abstract}
    Last passage percolation (LPP) is a model of a directed metric and a zero-temperature polymer where the main observable is a directed path (termed as the geodesic) evolving in a random environment accruing as energy the sum of the random weights along itself.
    When the environment distribution has light tails and a fast decay of correlation, the random fluctuations of LPP are predicted to be explained by the celebrated Kardar--Parisi--Zhang (KPZ) universality theory.
    However, the KPZ theory is not expected to apply for many natural environments, particularly “critical” ones. A characteristic feature of the latter is a hierarchical structure where each level in the hierarchy resembles white noise at a different scale, often leading to logarithmic correlations.

    In this article, we initiate a novel study of LPP in such hierarchical environments by investigating two particularly interesting examples.
    The first is an independent and identically distributed (i.i.d.) environment but with a power-law distribution with an inverse quadratic tail decay which is conjectured to be the critical point for the validity of the KPZ scaling relation.
    The second is the Branching Random Walk which is a hierarchical approximation of the two-dimensional Gaussian Free Field, the universal scaling limit of various two-dimensional critical models.
    The second example may be viewed as a directed, high-temperature (weak coupling) version of Liouville Quantum Gravity, a model of random geometry driven by the exponential of a logarithmically correlated field.
   
    Due to the underlying fractal structure, LPP in such environments is expected to exhibit logarithmic correction terms with novel critical exponents. 
    While some discussions about such critical models appear in the non-rigorous literature, precise predictions about exponents seem to be missing, in part due to the difficulty of simulating these models at large enough scales.
    
    Developing a framework based on multi-scale analysis, we obtain bounds on such exponents and prove almost optimal concentration results in all dimensions for both models, in the process elucidating similarities and distinctions between them.
    Finally, as a byproduct of our analysis we answer a long-standing question of Martin \cite{MarLinearGrowthGreedy2002} concerning necessary and sufficient conditions for the linear growth of the geodesic energy in i.i.d. random environments.
\end{abstract}

\maketitle

\setcounter{tocdepth}{1}
\tableofcontents

\section{Introduction}\label{intro}
One of the central aims of the last several decades of research in probability
theory and statistical physics
has been to understand interface growth in disordered media (e.g. the growth of
crystals, forest fires, or bacteria colonies).
It is widely predicted that the random fluctuations of such growing interfaces
do not depend on the microscopic details of any particular model, but rather 
stem from an interplay between just a few universal mechanisms. 
This motivates the study of \emph{universality classes} of growth models, where
examples in a given universality class share a common fluctuation theory. Models of random geometry form an important class of examples where boundaries of metric balls play the role of random interfaces.
This paper is focused on one such canonical model of random geometry known as 
\emph{last passage percolation} (LPP). 

\begin{definition}[Last passage percolation]\label{def:lpp}
    Let $(X(v))_{v\in \Z^2}$ be a collection of random variables indexed by the vertices of the lattice $\Z^2$.
    We refer to the individual variables $X(v)$ as \emph{weights}.
    For $n\ge1$, let $L_n$ be the \emph{last passage time} from $(0,0)$ to $(n,n)$:
\begin{equation}\label{LPPdef}
        L_n \coloneqq \max_{\pi} \sum_{v\in\pi} X(v),
\end{equation}
    where the maximum is taken over up-right directed lattice paths that start
    at $(0,0)$ and end at $(n,n)$.
    A \emph{geodesic} is a path achieving the above maximum.
    We write $\Gamma_n$ to denote such a maximizer (one may fix a deterministic 
    rule to break ties in the case there are multiple geodesics).
    
    Note that the definition of last passage time naturally extends to any pair of ordered points (in the coordinate-wise ordering) $u\preceq v \in \Z^2$ by defining $L(u;v)$ as in \eqref{LPPdef} but with the maximum over up-right directed paths from $u$ to $v$ (so $L_n=L((0,0);(n,n))$).
    Thus LPP can be thought of as a model of a random metric space where the last passage time is the distance between points 
    (strictly speaking, it is not a metric as it satisfies the reverse triangle inequality on account of the maximization in \eqref{LPPdef}).
    \footnote{The related model of \emph{first passage percolation} is where the weights are positive and one minimizes the passage time over all paths (including non-directed paths), thereby obtaining a genuine metric space.}
\end{definition}

\begin{figure}[hb]
    \centering
    \begin{subfigure}[c]{0.4\textwidth}
            \includegraphics[width=\linewidth]{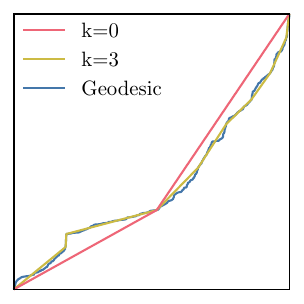}
    \end{subfigure}\hfill
    \begin{subfigure}[c]{0.55\textwidth}
            \includegraphics[width=\linewidth]{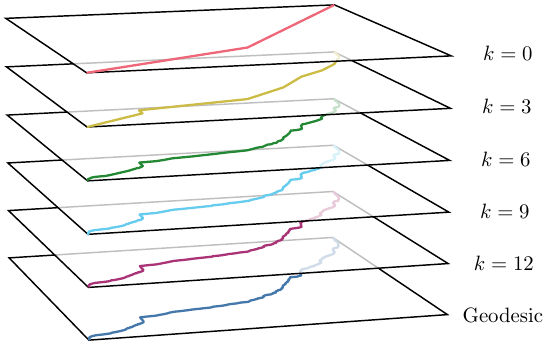}
    \end{subfigure}
    \caption{
        A simulation of the directed geodesic from $(0,0)$ to $(2^{15},2^{15})$
        in an environment of i.i.d. weights with power-law tails of exponent $2$.
        The left figure depicts the geodesic (blue) with two of its
        ``skeletons'' superimposed on top.
        The $k$\textsuperscript{th} skeleton is obtained from the geodesic by linearly
        interpolating between the weights that exceed $2^{15-k}$. 
        The right figure depicts the same situation as the left, but with more
        skeletons and with each path plotted on its own copy of the plane.
        (The skeletons corresponding to $k=6,9,12$ are visually quite similar to the geodesic, and their inclusion in the left figure would obscure the geodesic.)
    }\label{fig:skeletons}
\end{figure}

\noindent
Note that we have not yet specified the law or the correlation structure of the random field $(X(v))_{v\in \Z^2}$. While the focus in this article will be primarily on a class of fields exhibiting certain hierarchical multi-scale structures, we begin by reviewing how the behavior is expected to depend on the law of the field.

\subsection*{Kardar--Parisi--Zhang (KPZ) universality class} 
When the weights are sufficiently light-tailed and exhibit a fast decay of correlation (the simplest case being when the variables are independent and identically distributed), it is predicted that LPP belongs to the KPZ universality class \cite{KPZDynamicScalingGrowing1986}, exhibiting characteristic fluctuation behavior
\begin{align*}
    L_n-\E[L_n] \approx n^\chi,
    \qquad
    \max_{(x,y)\in\Gamma_n} |x-y|\approx n^\xi,
\end{align*}
with the universal exponents $\chi=1/3$ and $\xi=2/3$ satisfying the \emph{KPZ scaling relation}:
\begin{equation*}
    \chi=2\xi-1.
\end{equation*}
The quantity $\max_{(x,y)\in\Gamma_n} |x-y|$ measures the transversal
fluctuations of the LPP geodesic away from the Euclidean geodesic, the latter
being simply the diagonal line segment with endpoints $(0,0),(n,n)$.
Without delving into details, let us simply mention that the KPZ relation $\chi=2\xi-1$ stems from a characteristic competition between the fluctuation of the passage times
and the curvature exhibited by the \emph{limit shape}, i.e., the function 
$g$ on $(0,\infty)^2$
defined by 
\begin{align}\label{001}
    g(x,y)\coloneqq\limsup_{n\to\infty} \frac{L((0,0);(\floor{nx}, \floor{ny}))}{n},
\end{align}
where $\floor{\cdot}$ is the floor function.
By Kingman's subadditive ergodic theorem \cite{KinErgodicTheorySubadditive1968}, $g$ is a deterministic function which can be shown to be finite and concave for sufficiently light-tailed weights (e.g. \cite{MarLimitingShapeDirected2004}).
Thus by such concavity considerations, a transversal fluctuation of $n^{\xi}$ of the geodesic entails a loss of $n^{2\xi-1}$ in the last passage time which must compete with the $n^{\chi}$ order of fluctuations.
In \cite{ChaUniversalRelationScaling2013} Chatterjee established the KPZ relation in the setting of first passage percolation under strong assumptions on the existence of the exponents.
A more recent paper \cite{BSSRotationallyInvariantFirst2023} proves an unconditional related statement for rotationally invariant models.
We encourage the reader to consult the surveys
\cite{QuaIntroductionKPZ2011,
BGLecturesIntegrableProbability2015,
CorKardarParisiZhangUniversality2016,
ADH50YearsFirstpassage2017,
ComDirectedPolymersRandom2017,
GanRandomMetricGeometries2022,
ZygDirectedPolymersRandom2024}
and the references therein for more background on
the many facets of KPZ universality.
\\

\noindent
Staying with i.i.d. environments, the behavior is predicted to change as the variables become more heavy-tailed with power-law tails, as occasional large-valued weights are expected to be more pivotal in determining the last passage time.
That the heavy-tailed weights exhibit non-KPZ behavior  was first observed by Zhang \cite{ZhaGrowthAnomalyIts1990} and subsequently studied in several physics papers \cite{KruKineticRougheningExceptional1991,AFScalingSurfaceFluctuations1991,BKWSurfaceGrowthPowerLaw1991,LSSurfaceGrowthPowerlaw1992,BBPExtremeValueProblems2007,GLBRGroundstateStatisticsDirected2015} (see also the references therein).
The ``Flory argument'' heuristic allows one to derive fluctuation exponents for such heavy-tailed LPP.
To describe this we assume there exist constants $C,\al>0$ such that
\footnote{Here and in the sequel we use $\sim$ to denote asymptotic equivalence as $t\to\infty$, i.e. $A\sim B$ if $\lim_{t\to\infty}\frac{A}{B}=1$.}
$\P(X(v)>t)\sim Ct^{-\al}$.
As before, we postulate the existence of exponents $\chi=\chi(\al)$ and $\xi=\xi(\al)$ such that the random fluctuations of $L_n$ are of order $n^\chi$, and such that the maximum geodesic transversal fluctuation is of order $n^\xi$.
By definition of $\xi$, the geodesic $\Gamma_n$ typically lies within a cylinder $K$ of radius $\approx n^\xi$ centered around the diagonal $y=x$.
The Flory heuristic predicts that the largest-valued weights inside $K$ dictate the fluctuations and thus these large weights must compete with the parabolic loss incurred by the transversal fluctuations.
It is straightforward to see that (the interested reader might also want to refer to \cite{LLRExtremesRelatedProperties1983})
\begin{equation*}
    \max_{v\in K}X(v)\approx |K|^{\frac{1}{\al}} \approx n^{\frac{1+\xi}{\al}}.
\end{equation*}
This suggests that $\chi=\frac{1+\xi}{\al}$.
On the other hand, a transversal fluctuation of order $n^\xi$ incurs a parabolic loss of order $n^{2\xi -1}$,
leading to the relation
\[
    \frac{1+\xi}{\al}=2\xi-1.
\]
Rearranging, we obtain 
\begin{align}\label{eq:alpha-exponents-1}
    \chi = \frac{3}{2\al-1},\qquad\quad
    \xi = \frac{1+\al}{2\al-1}.
\end{align}
Setting $\al=5$  recovers the KPZ exponents $\chi=1/3$ and $\xi=2/3$.
While a rigorous analysis of any LPP model in this regime remains wide open,
on the basis of the above reasoning and substantial numerical evidence it is predicted in the physics literature that LPP belongs to the KPZ universality class for $\al>5$.
For $\al\in(2,5)$ the exponents $\chi,\xi$ in \eqref{eq:alpha-exponents-1} differ from their KPZ counterparts, with each $\al\in(2,5)$ corresponding to a distinct universality class.
Further, it is known that the limit shape exists if and only if $\al>2$ and hence the above heuristic is only valid in this regime (see Section \ref{intro:finite-second-moment} for an in-depth discussion of the finiteness of the limit shape which will also play an important role in this paper). 
For $\al \in (0,2)$, the largest weights dominate not only the fluctuations but also the leading order of the last passage time.
In this case the geodesic has a transversal fluctuation of order $n$ with last passage time of order $n^{2/\al}$ and hence
\begin{equation*}
    \chi = \frac{2}{\al},
    \qquad\quad \xi = 1.
\end{equation*}
This regime was treated rigorously by Hambly and Martin in their pioneering work \cite{HMHeavyTailsLastpassage2007}, and in fact they showed that the rescaled last passage time
$n^{-2/\al} L_n$
converges in distribution to an explicit non-degenerate random variable, and similarly for the rescaled geodesic $n^{-1}\Gamma_n$.
We encourage the reader to consult \cite{HMHeavyTailsLastpassage2007} for precise statements and for many other interesting results.

Turning next to the transition locations $\al=2$ and $\al=5$, 
it has been widely observed that ``critical'' statistical physics models are scale-invariant and exhibit fractal behavior, i.e. they look the same at all scales.
This manifests in extra correction terms, often logarithmic in nature, in the behavior of various observables.
In physics parlance, a change in scaling relations often coincides with the marginality of an associated renormalization group flow, leading to subtle changes in the universality behavior.
For example, consider the critical $d$-dimensional Ising model, whose observables exhibit power-law scaling behavior governed by universal critical exponents that depend on $d\ge 1$.
However, in the upper critical dimension $d=4$ (i.e. the model exhibits mean field behavior for $d>4$ and non-mean field behavior for $d<4$), the scaling is not purely power-law but rather carries additional polylogarithmic factors \cite{ADMarginalTrivialityScaling2021}.
In the context of LPP, since $\al=2$ and $\al=5$ mark transitions in the mechanisms governing fluctuations ($\al=2$ is the boundary for the validity of the KPZ scaling relation, and $\al=5$ is the boundary for the validity of the Flory heuristic that the largest weights dictate fluctuations), one may expect the LPP model in these regimes to exhibit power-law scaling with additional polylogarithmic factors.
Another recent line of work investigating similar random optimization problems where logarithmic factors appear in certain ``critical'' settings is \cite{DEHPMinimalSurfacesRandom2025,DEPMinimalSurfacesStrongly2025,OPWMinimizingCurvesBrownian2026}.
Whether or not the findings of this paper and the above articles can be connected to the broader renormalization group story remains a very interesting question.
To the best of our knowledge, predictions about the critical behavior and associated exponents in LPP are missing even from the non-rigorous literature (however see \cite{LSSurfaceGrowthPowerlaw1992,LSFractalsSurfaceGrowth1992,LSExactScalingSurface1993} for studies of related heavy-tailed growth models at criticality).
Note that the Flory argument presented above is only suited for predicting the exponents governing the leading-order polynomial fluctuations and is too coarse to probe polylogarithmic corrections at the critical points.
Similarly, although the predicted fluctuation exponents have been numerically verified in all regimes of $\al$
(see \cite{GLBRGroundstateStatisticsDirected2015} and the references therein),
a numerical study focusing on polylogarithmic corrections at criticality would be difficult given current computational resources.
\\

\noindent
In this article we build a framework to initiate the study of LPP driven by noise fields that exhibit a hierarchical multi-scale structure characteristic of critical models. We will soon see how the $\al=2$ heavy-tailed model falls in this class. Another well-known class of random fields exhibiting such a hierarchical structure is that of logarithmically correlated Gaussian fields, which includes, for instance, the two-dimensional Gaussian free field (a more elaborate discussion on such objects is presented in the next subsection and Section \ref{sec:brw0}). Part of this program is inspired by the recent explosion of activity in the study of models of Liouville quantum gravity (LQG) and pre-limiting random planar maps.
The main model in such settings is a first passage percolation (FPP) driven by the exponential of the Gaussian free field
\cite{GwyRandomSurfacesLiouville2020,DDGIntroductionLiouvilleQuantum2023,BPGaussianFreeField2025} 
which is expected to model a ``random'' Riemannian surface by quantum field theoretic considerations.
In this vein, LPP on a logarithmically correlated field may be viewed as a \emph{directed, high-temperature (weak coupling)} version of LQG (by linearizing the exponential).
While both FPP and LPP in i.i.d. light-tailed environments are expected to belong to the KPZ universality class, they may differ significantly when the underlying noise is correlated.
This is partly because in the i.i.d. light-tailed setting, even though FPP geodesics are not constrained to be directed, they still proceed roughly linearly, as winding around causes path lengths to typically increase (see \cite[Proposition 5.8]{KesAspectsFirstPassage1986}).
In contrast, for correlated environments, owing to clustering behavior of atypical weights, the non-directed geodesic may wander a lot in search of favorable regions causing its fractal dimension to be strictly greater than $1$. 
For instance, such a result for the non-directed LQG geodesic appears in \cite[Theorem 1.1]{FGRoughnessGeodesicsLiouville2024},
whereas in directed LQG (at least in the prelimit) the geodesic is forced to move in a Lipschitz fashion due to directedness.
This discrepancy also manifests in the related model of fractal percolation on the plane, where it is known that there is a regime where standard percolation occurs, but directed percolation never occurs (see \cite{CCDConnectivityPropertiesMandelbrots1988,ChaAbsenceDirectedFractal1995}).
\\

We next present a more detailed discussion on the structure of the random noise fields that will be considered in this work before stating the main results.

\subsubsection{Multi-scale environments}\label{intro:critical-points}
The following heuristic calculation gives some indication of self-similarity in the $\al=2$ heavy-tailed LPP model.
Suppose $n=2^m$ for some $m\in\N$, and 
partition $[0,n]^2\cap\Z^2$ into $2^k\times 2^k$ boxes for some $k\le m$. 
In each such box, the largest weights are typically of order $2^k$ and there are only $O(1)$ many of them.
Thus the entire noise field may be viewed approximately as a superimposition of $\log_2 n$ many independent Poisson point processes, one per dyadic scale, where the $k\textsuperscript{th}$ process has intensity $\frac{1}{2^{2k}}$ and each of its points has weight $2^{k}$.
The expert reader might at this point already see a resemblance with the Gaussian free field (GFF).
For the purposes of this discussion we will just consider the branching random walk (BRW) which may be a viewed as a coarsening of the GFF (a brief discussion about the GFF is presented in Section \ref{sec:brw0}).
In BRW the noise field is a sum of $\log n$ many independent fields where in the $k\textsuperscript{th}$ field, considering a decomposition into $2^k \times 2^k$ boxes as above, all vertices in a box are assigned the \emph{same} standard Gaussian value and the variables across boxes are independent.
Note that in the $\al=2$ heavy-tailed case, each $2^k\times 2^k$ box contains \emph{on average} a \emph{single} weight of size $2^k$, whereas in the BRW case \emph{every} vertex in a given $2^k\times 2^k$ box is associated to the \emph{same} variable of size $O(1)$.

It is not too difficult to see that, in either model, the last passage time \emph{per scale} is $O(n)$,  leading to an overall upper  bound of $O(n\log n)$ for the last passage time (a brief argument to this effect is presented in Section \ref{iop}).
Thus one of the main motivations of this paper is to understand the true polylogarithmic correction terms to the leading order of the last passage time in such hierarchical settings, as well as the fluctuation theory of the last passage time.
The geometric counterpart of this focuses on understanding whether the geodesic, restricted to the weights it collects from any given scale, looks similar to the geodesic for the LPP model that uses only weights of that scale (see Figure \ref{fig:skeletons}, for instance).
\begin{figure}[hb]
    \centering
    \begin{subfigure}[t]{0.45\textwidth}
        \centering
        \includegraphics[width=\linewidth]{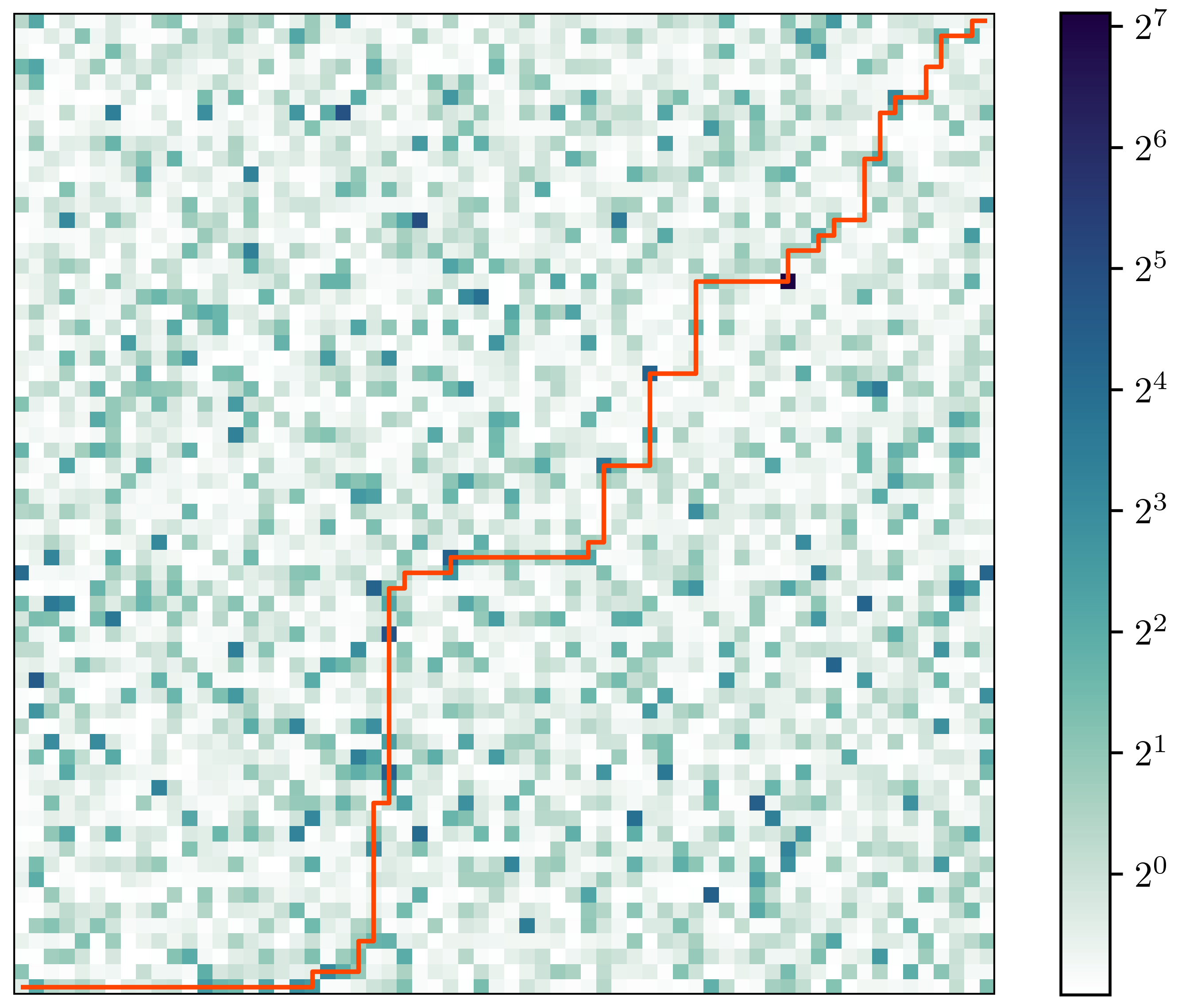}
        \caption{
            The geodesic (red) from $(0,0)$ to $(2^6, 2^6)$ is superimposed on a heatmap portraying the underlying i.i.d. weights.
        }
    \end{subfigure}
    \hfill
    \begin{subfigure}[t]{0.5\textwidth}
        \centering
        \includegraphics[width=\linewidth]{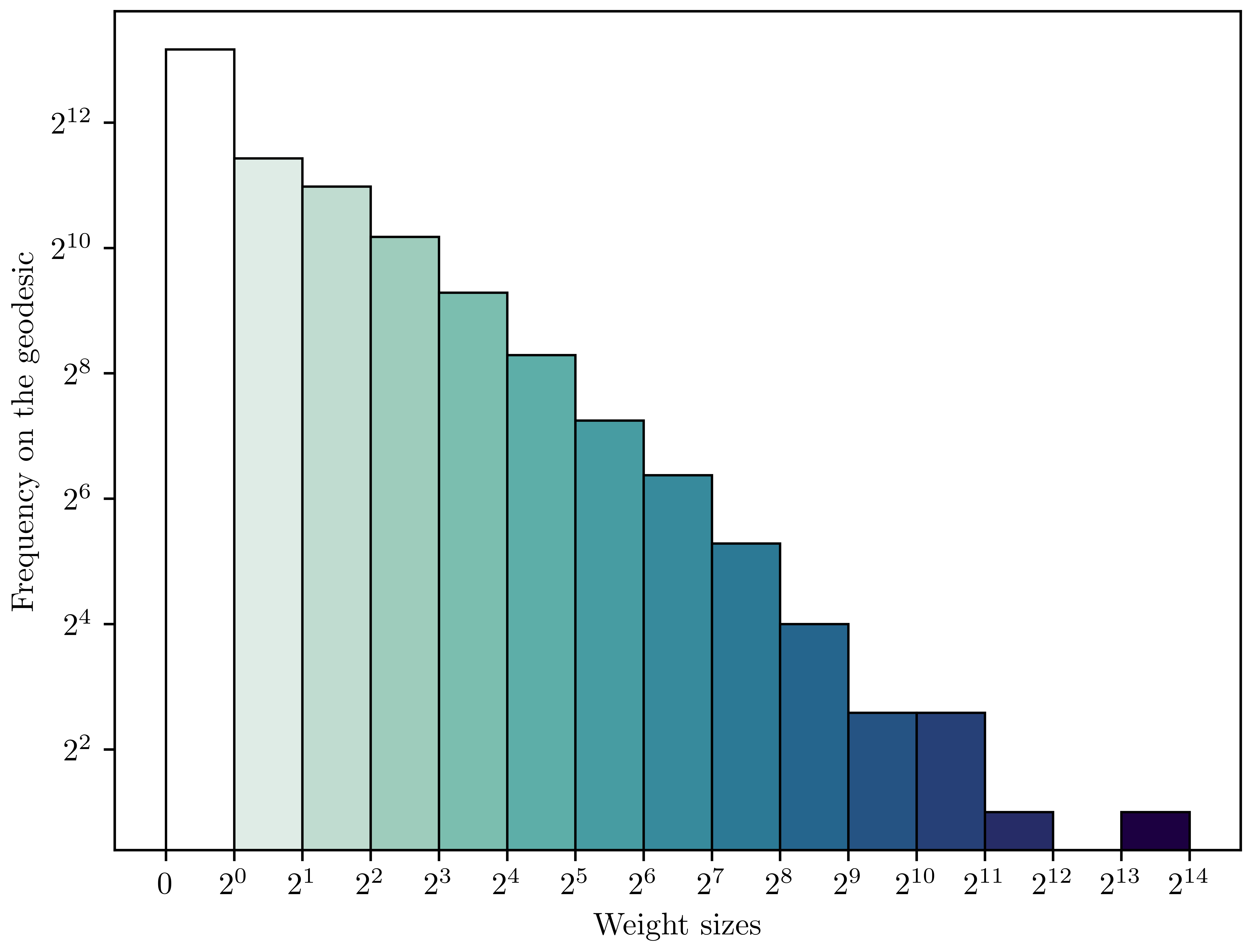}
        \caption{
            Log-log histogram of the weights on the geodesic from $(0,0)$ to $(2^{13}, 2^{13})$.
            The approximate linearity of the profile reflects the self-similarity of the critical LPP model.
        }
        \label{fig:iid-histogram}
    \end{subfigure}
    \caption{Simulations of the directed geodesic in an environment of i.i.d. weights with power-law tails of exponent $\al=2$.}
    \label{fig:heatmap-iid}
\end{figure}

We now proceed to our main results.
Our first result is an explicit lower bound for the logarithmic correction exponent for the last passage time in an environment of i.i.d. weights with power-law tails of exponent $\al=2$.
A subsequent result states an analogous bound for the last passage time on the branching random walk.
We will also present concentration results.
Further, a question of Martin about the dependence of the finiteness of the limit shape on the tails of the weights will be answered in the negative. 
Finally all our results have natural analogues in higher dimensions.

\subsection{Main results}
\begin{maintheorem}[Logarithmic correction lower bound for $\al=2$]\label{thm:main}
    Let $X$ be a non-negative random variable with a power-law tail of exponent $\al=2$, i.e. there exists $C>0$ such that as $t\to\infty$,
    \[
        \P(X>t)\sim Ct^{-2}.
    \]
    Consider LPP where the weights are i.i.d. copies of $X$.
    Then there exists $c>0$ such that for all sufficiently large $n$,
    \begin{align*}
        \P\left(
            L_n \ge c\,\frac{n(\log n)^{3/4}}{\log\log n}
        \right)
        \ge 1-e^{-(\log n)^{97}}.
    \end{align*}
\end{maintheorem}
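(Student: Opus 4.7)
The plan is to explicitly construct a high-probability up-right path from $(0,0)$ to $(n,n)$ realizing the claimed lower bound by hierarchically collecting contributions across many dyadic scales. I would fix a set of active scales $k_1 < k_2 < \cdots < k_K$ inside $\{1,\dots,\lfloor \log_2 n\rfloor\}$ with $K \asymp (\log n)^{3/4}/\log\log n$, and at each scale $k_i$ tile $\lb 0,n\rb^2$ into axis-aligned boxes of side $2^{k_i}$. Since the maximum of $4^{k_i}$ i.i.d.\ Pareto($2$) weights concentrates at $2^{k_i}$, a typical scale-$k_i$ box contains a weight of that order; after a union bound, one may assume that at every chosen scale a definite fraction of boxes are \emph{good}, each containing a weight of size $\geq \delta 2^{k_i}$ inside a prescribed central sub-region where it can be safely incorporated into a hierarchical path.

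The path is then constructed top-down. One starts with a backbone at the coarsest active scale $k_K$ that connects one good box per slab along the diagonal, routing through the anchor weight in each. Within each segment of this backbone, one recursively subdivides into scale-$k_{K-1}$ boxes and locally deforms the path to visit the anchor weight of a good box at the next finer scale; iterate down to $k_1$. The transversal cost of a deformation at scale $k_i$ is $O(2^{k_i})$, which is absorbed by the ambient scale-$k_{i+1}$ region and therefore does not disrupt the coarser hierarchy. At each scale $k_i$ this path visits $\sim n/2^{k_i}$ anchors, each contributing a weight of order $2^{k_i}$, yielding $\Theta(n)$ per scale; summing over the $K$ active scales produces the total $\Theta(nK) = \Theta\!\left(n(\log n)^{3/4}/\log\log n\right)$.

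For the concentration tail, I would arrange that the per-box ``good'' event at each scale fails with probability at most $e^{-(\log n)^{c'}}$ for some $c'$ sufficiently larger than $97$, which is achievable by only demanding weights of a fixed constant fraction of the typical extreme. A union bound over the $O(n^2)$ boxes per scale and the $K = \polylog(n)$ active scales then yields the claimed $1-e^{-(\log n)^{97}}$ bound.

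The main obstacle is the coordination of the recursive construction. Each gap $\Delta_i = k_{i+1}-k_i$ must be large enough to guarantee that (i) scale-$k_i$ good boxes exist densely within each scale-$k_{i+1}$ region in a way that is robust to conditioning on the coarser backbone, and (ii) the path deformations remain jointly compatible with all coarser choices. The balancing of these competing constraints---large per-scale gap to enforce independence and geometric compatibility versus maximizing the number of scales to increase the cumulative weight---is precisely what dictates the specific exponent $3/4$ and the $1/\log\log n$ factor. I expect the most delicate technical step to be quantifying how the existence of a good scale-$k_i$ anchor can be made insensitive to the shape of the recursively built path at all coarser scales, which is ultimately what pins down the optimal choice of $K$.
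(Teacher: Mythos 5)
Your high-level framework -- a hierarchical top-down construction of a directed path collecting anchor weights from many dyadic scales, together with a union bound to absorb failures -- is the right kind of argument, and it aligns in spirit with the paper's multi-scale approach. But the quantitative bookkeeping is off in a way that hides the one idea that actually produces the exponent $3/4$, so as written the proof does not go through.

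The central problem is the claim that the path collects ``$\Theta(n)$ per scale'' for $K\asymp (\log n)^{3/4}/\log\log n$ scales. To get $\Theta(n)$ from scale $k_i$ you must visit $\sim n/2^{k_i}$ anchors each of weight $\Theta(2^{k_i})$, which requires the anchor to lie in a cylinder around the local diagonal whose area is a \emph{constant} fraction of the box (otherwise, for $\alpha=2$ tails, the probability that the sub-region sees a weight $\gtrsim 2^{k_i}$ degenerates). A constant relative cylinder width means the slopes between consecutive anchors deviate from $1$ by a constant. But then the passage time you can collect at all finer scales is multiplied, per level, by a factor $\sum_j \sqrt{(x_{j+1}-x_j)(y_{j+1}-y_j)}/n$ which is $1-\Theta(1)$ away from $1$, so the yield decays geometrically and after $O(1)$ levels the construction is dead. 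You cannot simultaneously have $\Theta(n)$ per scale \emph{and} $\polylog(n)$ usable scales; the two claims contradict the very tradeoff you flag at the end of your proposal. If one carries your framework through carefully with a tunable anchor threshold $\delta\, 2^{k_i}$, the central sub-region must have relative area $\sim\delta^2$, each anchor carries weight $\sim\delta 2^{k_i}$, the per-scale yield is only $\delta n$, and the slope deviations per scale are $\sim\delta^2$. The naive bound $\sum_j\sqrt{x_jy_j}\ge(1-\delta^2)n$ then caps you at $K\sim\delta^{-2}$ scales, giving a total of $\delta n\cdot\delta^{-2}=n/\delta\le n(\log n)^{1/2}$ -- the exponent $1/2$, not $3/4$.

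What is missing is the paper's key cancellation lemma (Lemma 4.9): if the segment slopes are all within a $(1+\delta)$ multiplicative band of the ambient slope \emph{and} the horizontal/vertical increments are constrained to sum to the ambient side lengths, then $\sum_j\sqrt{x_jy_j}\ge(1-\delta^2)n$ rather than $(1-\delta)n$ -- a quadratic rather than linear loss. This upgrades the number of usable iterations from $\sim\delta^{-2}$ to $\sim\delta^{-4}$, and optimizing $\delta^{-4}\sim\log n$ forces $\delta\sim(\log n)^{-1/4}$, whence $K\sim\log n/\log\log n$ scales each yielding $\sim n/(\log n)^{1/4}$, for a total of $n(\log n)^{3/4}/\log\log n$. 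Without this quadratic improvement your construction cannot break the $(\log n)^{1/2}$ barrier. Relatedly, the exponent is not determined by the scale separation $\Delta_i=k_{i+1}-k_i$ (in the paper this separation is $\Theta(\log\log n)$ for a purely technical concentration reason and only contributes the $\log\log n$ denominator); it is determined by the cylinder width and the cancellation. I would also push back on the assertion that deformations at scale $k_i$ are harmlessly ``absorbed by the ambient scale-$k_{i+1}$ region'': the danger is the reverse direction -- the coarse backbone's accumulated slope deviations constrain how many anchors are accessible at finer scales, and this must be quantified precisely, which is exactly what the paper's a priori slope bounds and the cancellation lemma do.
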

The exponent $97$ appearing in Theorem \ref{thm:main} holds no particular significance; our proof implies a lower bound of $\smash{1-e^{-(\log n)^C}}$ for any large constant $C>0$, provided $c>0$ is modified appropriately.\\

As already indicated, an $O(n\log n)$ upper bound is not difficult to prove (a brief argument will be provided in Section \ref{iop}).  Thus  Theorem \ref{thm:main} leaves open the intriguing problem of identifying the true growth rate of $L_n$.\\ 

Despite the true growth rate remaining unknown, the next  result asserts that $L_n$ is concentrated.
\begin{maintheorem}[Concentration for $\al=2$]\label{thm:conc}
    In the setting of Theorem \ref{thm:main}, there exist $C,t_0>0$ such that for all $t\ge t_0$ and all $n$,
    \[
        \P\left(\left|\frac{L_n-\E[L_n]}{n}\right| > t\right) \le \frac{C}{t^{4/3}}.
    \]
    Note that by Theorem \ref{thm:main}, there exists $c>0$ such that
    \[
        \E[L_n] \ge c\,\frac{n(\log n)^{3/4}}{\log\log n}
    \]
    for all sufficiently large $n$ and hence $L_n$ is concentrated around its mean. 
\end{maintheorem}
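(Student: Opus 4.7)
The strategy is a truncation argument. Fix $t \ge t_0$ (with $t_0$ large, to be chosen), set $M \coloneqq tn$, and define the truncated weights $\tilde X_v \coloneqq X_v \wedge M$, with corresponding last passage time $\tilde L_n$. Let $A \coloneqq \{\max_{v \in \dmn} X_v \le M\}$ be the ``no huge weight'' event. Three ingredients drive the argument.

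First, by the $\al = 2$ tail assumption and a union bound, $\P(A^c) \le (n+1)^2 \cdot C M^{-2} = O(t^{-2})$, and $L_n = \tilde L_n$ on $A$. Second, in general $0 \le L_n - \tilde L_n \le \sum_v (X_v - M)_+$; taking expectations and using $\E(X-M)_+ \sim C/M$, we get $0 \le \E L_n - \E \tilde L_n \le C n^2 / M = Cn/t \le tn/4$ for $t \ge t_0$ large. Third, one establishes a Kesten--Talagrand-type variance bound $\Var(\tilde L_n) \le C_0\, n\, \E \tilde X^2$; since $\E \tilde X^2 = \int_0^M 2u\, \P(X>u)\, du \sim 2 C \log M$ for the truncated $\al=2$ weight, this yields $\Var(\tilde L_n) \le C_1 n \log(tn)$. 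Chebyshev at scale $tn/2$ then produces $\P(|\tilde L_n - \E \tilde L_n| > tn/2) \le 4 C_1 \log(tn)/(t^2 n)$, and since $\log(tn)/n \le \log t + 1/e$ uniformly in $n \ge 1$, the right-hand side is at most $C_2(1+\log t)/t^2 \le C_3/t^{4/3}$ for $t \ge t_0$.

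Assembling the three ingredients via the triangle inequality (absorbing the $Cn/t$ mean shift into the threshold) gives $\P(|L_n - \E L_n| > tn) \le \P(A^c) + \P(|\tilde L_n - \E \tilde L_n| > tn/2) = O(t^{-4/3})$, as required for both tails.

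\emph{Main obstacle.} The crux is the variance bound for the truncated LPP. A plain Efron--Stein inequality only delivers
\[
\Var(\tilde L_n) \;\le\; \E\Bigl[\textstyle\sum_{v \in \Gamma} \tilde X_v^2\Bigr] + (2n+1) \E \tilde X^2,
\]
and the sum-of-squares term admits only the crude bound $M \cdot \E \tilde L_n$. Because $\E \tilde L_n$ itself grows super-linearly in $n$ with polylogarithmic corrections (cf.\ \cref{thm:main} and the $O(n \log n)$ upper bound mentioned in the introduction), this direct route injects an unwanted polylog-in-$n$ factor that destroys the uniformity in $n$ required by the Chebyshev estimate. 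Upgrading to the desired variance bound calls for the refined Kesten-type martingale argument, in which each weight's contribution to the variance is controlled by its own deviation $\Var(\tilde X_v)$ rather than by the range $M$; this is possible because the geodesic has deterministic length $2n+1$, so only that many martingale increments are ``active'', and each active increment has $L^2$-norm governed by $\Var(\tilde X_v)$ via a pivotal-vertex / greedy-path argument in the spirit of Kesten's FPP concentration proof.
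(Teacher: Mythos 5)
Your overall skeleton (truncate at level $tn$, control $\P(L_n\ne\wt L_n)$ and the mean shift, then apply Chebyshev to the truncated passage time) is the same as the paper's, and your first two ingredients are fine. The fatal problem is the third: the claimed Kesten--Talagrand-type bound $\Var(\wt L_n)\le C_0\,n\,\E[\wt X^2]\asymp n\log(tn)$ is not just unproven but false for this model. Kesten's martingale argument is tied to the \emph{minimization} structure of FPP: there one can decouple an edge weight from the event that the edge lies on the geodesic (e.g.\ by comparing with the geodesic computed with that weight set to its most favorable value), because lowering a weight only attracts the geodesic. In LPP the correlation runs the other way --- the geodesic is \emph{attracted} to large weights --- so $\E\bigl[\wt X(v)^2\1_{v\in\Gamma}\bigr]$ cannot be replaced by $\E[\wt X^2]\,\P(v\in\Gamma)$, and the Efron--Stein sum $\E\bigl[\sum_{v\in\Gamma}\wt X(v)^2\bigr]$ is genuinely of order $n^2$ rather than $n\log(tn)$: already the single largest weight in $\dmn$ is of order $n$ (below your truncation level), so even $\max_\pi\sum_{v\in\pi}\wt X(v)^2\gtrsim n^2$. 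More decisively, your variance claim would force concentration of $\wt L_n$ at scale $\sqrt{n\log(tn)}$, which contradicts the order-$n$ fluctuations this model actually has: by \eqref{eq:lower-via-max} and \cref{prop:fluc-lower-bound} (see also the $\Theta(n^2)$ variance in \eqref{eq:brw-variance} for the BRW analogue), deviations of order $n$ occur with probability at least inverse-polylogarithmic in $n$, and these are driven by weights of size $\Theta(n)$ that survive truncation at $tn$. So no ``pivotal-vertex / greedy-path'' refinement can rescue the step; the obstacle you flagged is the theorem's actual difficulty, not a technicality.

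The paper's proof accepts a much weaker variance bound and compensates by optimizing the truncation level. It truncates at $K^2n$ with $K$ a free parameter, applies Efron--Stein to get $\Var(\wt L_n)\le\E\bigl[\max_\pi\sum_{v\in\pi}\wt X(v)^2\bigr]$, and bounds this expected LPP value with squared weights via Martin's general estimate $\E[\max_\pi\sum_{v\in\pi}Y(v)]\lesssim n\int_0^\infty\P(Y>s)^{1/2}\,\mrm{d}s$, giving $\Var(\wt L_n)\lesssim K^2n^2$ (note this is quadratic, not linear, in $n$). Chebyshev then yields deviation probability $\lesssim t^{-2}$ at scale $tKn$, while $\P(L_n\ne\wt L_n)\lesssim K^{-4}$ and the mean shift is $\lesssim n/K^2$; choosing $K=t^{1/2}$ and rescaling $t^{3/2}\mapsto t$ produces exactly the exponent $4/3$. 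If you replace your false variance bound by this $K^2n^2$ bound but keep your rigid choice $M=tn$, you only get a tail of order $t^{-1}$, which is why the decoupling of the truncation level from the deviation scale is essential.
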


A few remarks are in order.

\begin{remark}\label{rem:noconc}
    As mentioned above, \cite{HMHeavyTailsLastpassage2007} showed that for $\al\in(0,2)$, the rescaled last passage time $n^{-2/\al}L_n$ converges in distribution to a non-degenerate random variable in particular implying no concentration.
    This is a manifestation of the fact that for $\al\in(0,2)$, the fluctuation theory of the heavy-tailed LPP model is governed by the fluctuations of the $O(1)$ many largest weights.
    In contrast, Theorems \ref{thm:main} and \ref{thm:conc} indicate that at the critical point $\al=2$ the LPP fluctuation theory depends non-trivially on the fluctuations of weights of all scales, 
    resulting in a concentrated last passage time that grows strictly faster than the passage time in any given scale.
\end{remark}

\begin{remark}\label{lower1234}
    While the above result proves concentration at scale $n$, we expect that the true fluctuation scale is in fact smaller than $n$ by a polylogarithmic factor.
    A heuristic explanation for this is that at the critical point $\al=2$, we expect the geodesic transversal fluctuations to be smaller than $n$ by a polylogarithmic factor.
    Then, assuming the validity of the reasoning around \eqref{eq:alpha-exponents-1}, the last passage time fluctuations should be dictated by the largest weight in a strip around the diagonal of width $\frac{n}{\polylog n}$ which is typically $\frac{n}{\polylog n}$.
    In fact, later in Section \ref{sec:conc} we state and prove fluctuation lower bounds consistent with this (see Proposition \ref{prop:fluc-lower-bound} and the discussion preceding it).
    Namely, the fluctuation is at least $n$ with probability at least $\frac{1}{\polylog n}$.
\end{remark}

We next present our counterpart results for LPP on the branching random walk.

\subsubsection{Last passage percolation driven by logarithmically correlated Gaussian fields}\label{sec:brw0}
Recall that the 2D discrete Gaussian free field (GFF) on $\lb{0,n}\rb^2$, say with Dirichlet boundary conditions, is a random two-dimensional generalization of Brownian motion with the covariance structure given by the Green's function of the 2D simple random walk killed on hitting the boundary.
\footnote{For real numbers $x\le y$ we write $\lb x,y\rb\coloneqq[x,y]\cap\Z$.}
The Green's function in two dimensions decays logarithmically, making the GFF 
logarithmically correlated
(see e.g. \cite{BPGaussianFreeField2025} for background on the GFF). 
Beyond being a canonical directed model of geometry in a log-correlated environment, the model of LPP on the GFF is expected to be the scaling limit of natural optimization problems over paths on random surfaces arising from dimer models, lozenge tilings, etc., whose fluctuations are known to converge to the GFF
(see e.g. \cite{KenDominosGaussianFree2001,KenHeightFluctuationsHoneycomb2008,PetAsymptoticsUniformlyRandom2015},
and see \cite{GorLecturesRandomLozenge2021} for a much more comprehensive overview of the topic and the relevant literature).
Further, another intriguing perspective arises in the context of the Directed Landscape.
The latter is a universal scaling limit of LPP models in the KPZ universality class constructed in \cite{DOVDirectedLandscape2022}.
A key ingredient in its construction is a model of LPP driven by the parabolic Airy line ensemble \cite{CHBrownianGibbsProperty2014}.
The latter is the edge scaling limit of Dyson Brownian motion which describes the evolution of the eigenvalues of a random Gaussian Hermitian matrix as its entries perform independent Brownian motions.
While the edge of Dyson Brownian motion, the part that is relevant for the Directed Landscape, has Tracy--Widom fluctuations \cite{TWLevelspacingDistributionsAiry1994}, it is known that the bulk eigenvalues exhibit a log-correlated structure with fluctuations converging to the GFF (see e.g. \cite{BorCLTSpectraSubmatrices2010a}).
Thus LPP on the GFF admits possible connections to a putative bulk version of the Directed Landscape. (This was mentioned in a talk by Amol Aggarwal at the ``KPZ meets KPZ'' workshop at the Fields Institute in March 2024.)

While the GFF is a canonical example of a planar log-correlated field, in this article, for simplicity, we will restrict our attention to the simpler Gaussian branching random walk (BRW).
The BRW already captures many of the properties of the GFF, including, most importantly, a version of log-correlation. Further, the BRW is exactly hierarchical, which will be clear from the precise definition presented in the next paragraph, making its analysis simpler.

To define the BRW we need the following notation.
Let $n=2^m$ for some $m\in\N$.
For $k\in\lb 0,\log_2n\rb $, we denote by $\BB^{(k)}$ the partition of $\dmp$ into boxes of size $\smash{\frac{n}{2^k}\times \frac{n}{2^k}}$.
We associate to each box $B\in\bigcup_{k=0}^{\log_2n}\BB^{(k)}$ an independent standard Gaussian random variable $\xi_B$.
The BRW is the centered Gaussian field $Y=(Y(v))_{v\in\dmp}$
given by 
\[
    Y(v) \coloneqq \sum_{k=0}^{\log_2n} \xi_{B^{(k)}(v)},
\]
where $B^{(k)}(v)$ denotes the unique box in $\BB^{(k)}$ containing $v$.
We denote by $\LBRW$ the last passage time from $(0,0)$ to $(n-1,n-1)$ with respect to $(Y(v))_{v\in\dmp}$.

\begin{maintheorem}[Logarithmic correction lower bound for BRW]\label{thm:brw}
    There exist $c,c'>0$ such that for all sufficiently large $n$, 
    \begin{align}\label{eq:brw-hp}
        \P\left(\LBRW \ge c\,\frac{n(\log n)^{1/2}}{\log\log n}\right)\ge 
        1-e^{-c'\frac{\log n}{(\log\log n)^2}}
    \end{align}
    and
    \begin{align}\label{eq:brw-mean}
        \E[\LBRW] \ge c\,\frac{n(\log n)^{1/2}}{\log\log n}.
    \end{align}
\end{maintheorem}

Note that interestingly the above bound is weaker than the one for the $\al=2$ heavy-tailed case (Theorem \ref{thm:main}). We will comment on this later.\\

The next result establishes concentration. Again, interestingly distinct from the heavy-tailed $\al=2$ case, here we obtain \emph{matching} upper and lower bounds. 
\begin{maintheorem}[Concentration and fluctuations for BRW]\label{thm:conc-brw}
    As $n\to\infty$,
    \begin{align}\label{eq:brw-variance}
        \Var(\LBRW) = \Theta(n^2).
    \end{align}
    Moreover, there exist $c, t_0>0$ such that for all $t\ge t_0$ and all $n$, 
    \begin{align}\label{eq:brw-fluctuations}
        e^{- t^2}\le
        \P\left(\left|\frac{\LBRW - \E[\LBRW]}{n}\right| > t\right)
        \le e^{-c t^2},
    \end{align}
    and hence the fluctuations of $\LBRW$ around its mean are of order $\Theta(n)$.
    Note that this combined with \eqref{eq:brw-mean} implies that $\LBRW$ is concentrated around its mean.
\end{maintheorem}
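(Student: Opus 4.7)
The plan rests on one clean structural observation arising from the coarsest scale of the hierarchy. The scale-$0$ Gaussian $\xi_0 := \xi_{B^{(0)}}$ is attached to the entire box $B^{(0)}=\dmp$, and hence contributes to $Y(v)$ for every vertex $v$. Since every up-right lattice path from $(0,0)$ to $(n-1,n-1)$ has exactly $2n-1$ vertices, this yields the deterministic identity
\[
    \LBRW = (2n-1)\,\xi_0 + L'',
\]
where $L''$ depends only on the Gaussians at scales $k\ge1$ and is therefore independent of $\xi_0$. This single observation immediately gives the variance lower bound $\Var(\LBRW) \ge (2n-1)^2 \ge n^2/2$ required in \eqref{eq:brw-variance}.

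For the matching upper bound I view $\LBRW$ as a function of the Gaussian vector $\xi=(\xi_B)_B$ and apply the Gaussian Poincar\'e inequality. At any point where the argmax $\Gamma^*$ is unique, the partial derivative in direction $\xi_B$ equals $|\Gamma^*\cap B|$. The crucial geometric estimate is that for \emph{any} monotone path $\pi$ in $\dmp$,
\[
    \sum_{k=0}^{\log_2 n}\sum_{B\in\BB^{(k)}}|\pi\cap B|^2
    \le \sum_{k=0}^{\log_2 n}\Bigl(\max_{B\in\BB^{(k)}}|\pi\cap B|\Bigr)\cdot|\pi|
    \le \sum_{k=0}^{\log_2 n}\frac{2n}{2^k}\cdot(2n-1)
    \le Cn^2,
\]
using that at scale $k$ no box of side $n/2^k$ can contain more than $2n/2^k$ vertices of a monotone path. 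Thus $\|\nabla\LBRW\|_2^2\le Cn^2$ almost surely, Gaussian Poincar\'e yields $\Var(\LBRW)\le Cn^2$ (completing \eqref{eq:brw-variance}), and Chebyshev gives the upper tail bound in \eqref{eq:brw-fluctuations}.

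For the lower tail bound, the decomposition gives $\Var(L'') = \Var(\LBRW)-(2n-1)^2 \le Cn^2$, so Chebyshev produces a constant $K$ with $\P(|L''-\E L''|\le Kn)\ge 1/2$. Setting $r\approx (t+K)/2$ so that $(2n-1)r-Kn\ge tn$, the independence of $\xi_0$ and $L''$ yields
\[
    \P(\LBRW-\E\LBRW\ge tn)
    \ge \P(\xi_0\ge r)\cdot \P(L''-\E L''\ge -Kn)
    \ge \frac{c}{t}\,e^{-(t+K)^2/8},
\]
which exceeds $e^{-t^2}$ for $t$ large enough; the matching left tail follows by symmetry of $\xi_0$. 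The concentration of $\LBRW$ around $\E[\LBRW]$ then follows from the upper tail estimate combined with the lower bound $\E[\LBRW]\ge cn(\log n)^{1/2}/\log\log n$ supplied by \cref{thm:brw}.

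I do not anticipate a serious obstacle: the only nontrivial input is the path-geometry bound on $\sum_{k,B}|\pi\cap B|^2$, which is elementary. What makes the BRW case simpler than the heavy-tailed setting of \cref{thm:main,thm:conc} is precisely that the single coarsest-scale Gaussian already isolates fluctuations on the correct order $n$, yielding both the variance lower bound and the Gaussian lower tail essentially for free.
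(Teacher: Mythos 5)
Your proof is correct and the lower bound arguments (variance and lower tail) use exactly the same decomposition $\LBRW=(2n-1)\xi_0+L''$ that the paper uses, with $\xi_0$ the coarsest-scale Gaussian and $L''$ its independent complement. The interesting difference is in the variance upper bound. The paper also uses a ``resample-and-compare'' argument, but implemented via Efron--Stein: for each scale $k$ it groups all boxes in $\BB^{(k)}$ whose corners lie on a common antidiagonal, resamples that whole block at once, bounds the resulting change in $\LBRW$ by $\frac{2n}{2^k}\max_i(\xi_{B_{j,i}}-\xi_{B_{j,i}}')$ using directedness (which caps the total number of geodesic steps across the block), and then invokes a crude bound on the second moment of the maximum of $O(2^k)$ i.i.d.\ Gaussians to pick up an extra factor $k$, still summable against $2^{-k}$. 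You instead go through the Gaussian Poincar\'e inequality, reading off the gradient of the max-of-linear-functionals as $|\Gamma^*\cap B|$ and bounding $\sum_{k}\sum_{B\in\BB^{(k)}}|\pi\cap B|^2 \le \sum_k (2n/2^k)(2n-1) \le Cn^2$ uniformly over monotone paths $\pi$. Both deliver $\Var(\LBRW)=O(n^2)$; yours is a bit shorter and avoids the auxiliary max-of-Gaussians lemma, at the price of leaning on the Gaussian structure of the environment (a smooth-function inequality for the Gaussian measure), whereas Efron--Stein works for any independent resampling and thus mirrors the technique the paper also uses in the heavy-tailed case (\cref{thm:conc}). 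One small point: the threshold in your lower tail ($r\approx(t+K)/2$) differs from the paper's ($C+t$) only cosmetically; both yield a bound of the form $ce^{-c't^2}\ge e^{-t^2}$ for large $t$.
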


The above results raise several fundamental questions about the sharp logarithmic correction factors as well as fluctuation theories  for LPP on BRW, GFF and other log-correlated fields. It is also of interest to see if comparison arguments in the spirit of Kahane's convexity inequality (e.g. \cite[Theorem 3.11]{LTProbabilityBanachSpaces1991}) may allow one to transfer results about LPP for branching random walk (or modified versions thereof) to the GFF.  We leave a closer investigation of such questions to future work.\\

\begin{figure}[ht]
    \centering
    \begin{subfigure}[h]{0.45\textwidth}
        \centering
        \includegraphics[width=\linewidth]{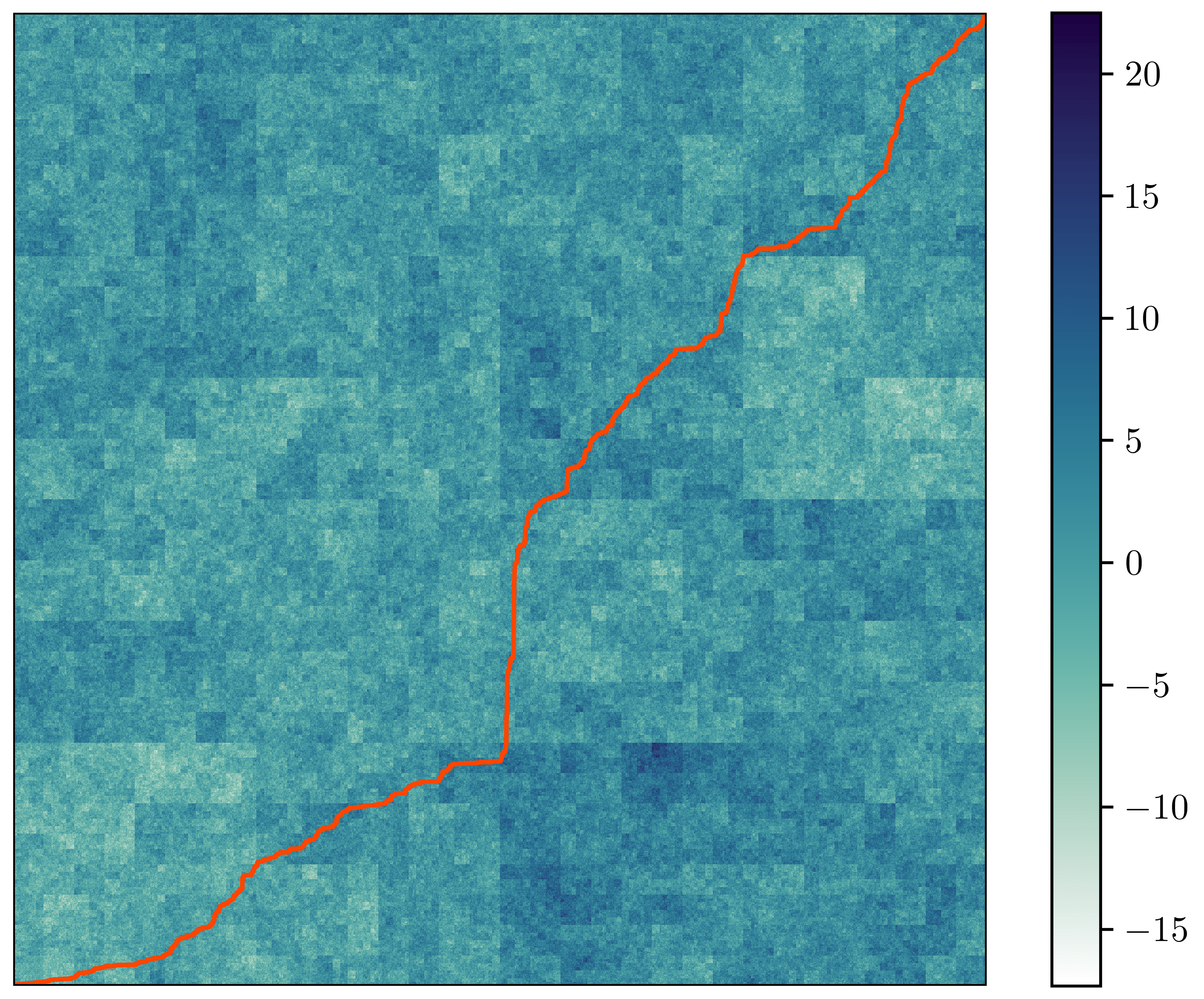}
        \caption{The geodesic (red) is superimposed on a heatmap portraying the         underlying branching random walk.}
    \end{subfigure}
    \hfill
    \begin{subfigure}[h]{0.5\textwidth}
        \centering
        \includegraphics[width=\linewidth]{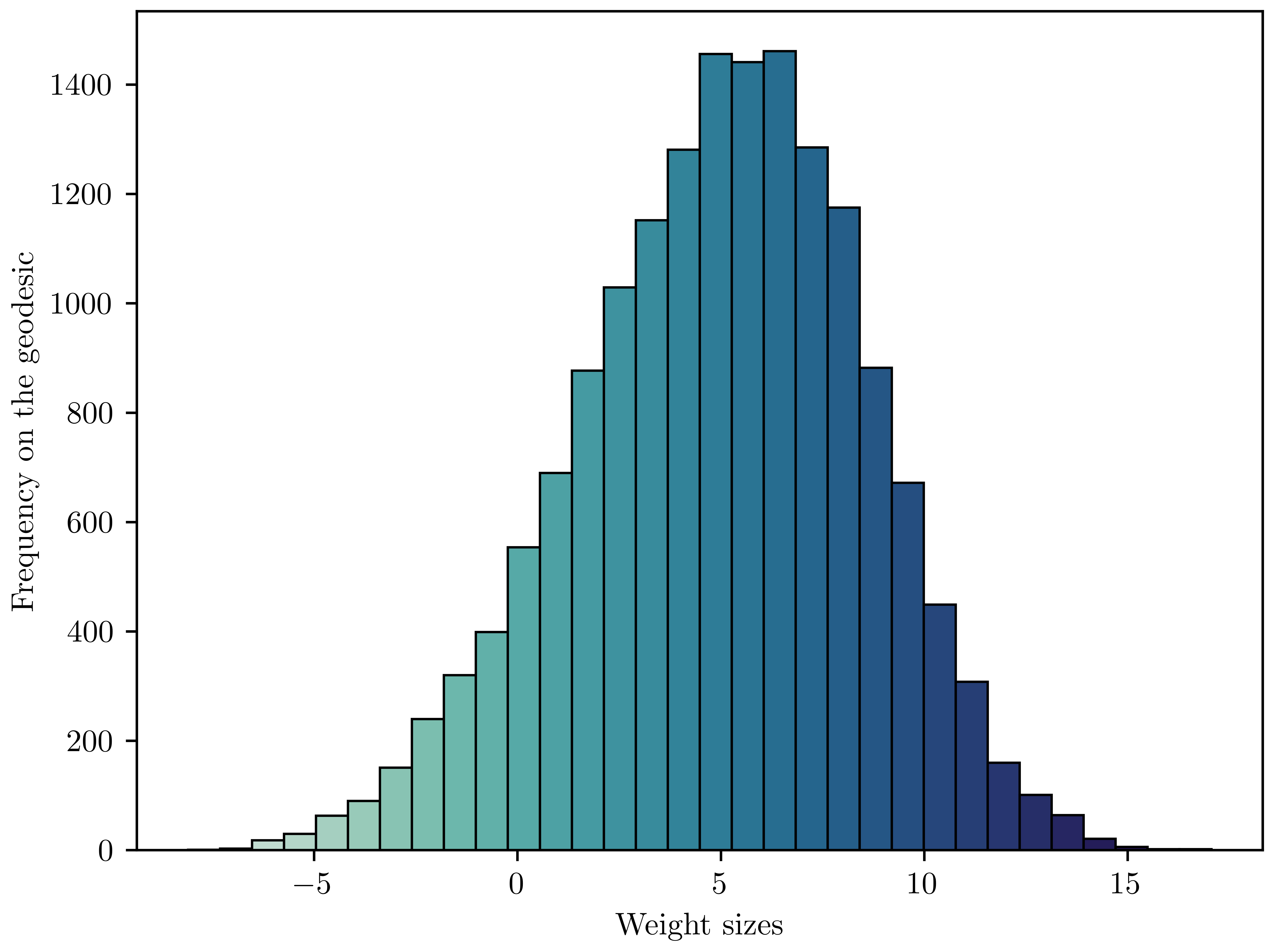}
        \caption{
            Histogram of the weights on the geodesic pictured on the left.
            Notice that the geodesic contains many negative weights.
        }
    \end{subfigure}
    \caption{
        A simulation of the directed geodesic from $(0,0)$ to
        $(2^{13}, 2^{13})$ on the branching random walk.
    }
    \label{fig:heatmap-brw}
\end{figure}

Our argument for Theorem \ref{thm:main} with minor modifications also allows us to construct a distribution with finite second moment and superlinear last passage time, thereby answering a question of Martin \cite{MarLinearGrowthGreedy2002}. Before stating the formal result we briefly review the pertinent literature on last passage limit shapes.

\subsubsection{A distribution with finite second moment and superlinear last passage time}\label{intro:finite-second-moment}
In the present discussion we only consider LPP models where the weights are i.i.d. copies of some non-negative random variable $X$. 
Recall that the \emph{limit shape} is the
deterministic function $g:(0,\infty)^2 \to [0,\infty]$ that measures the linear
growth rate of the last passage time (see \eqref{001} for a precise definition).
It is an important problem to identify the vertex weight distributions for which the corresponding LPP models satisfy $g<\infty$.
For instance, as explained in the discussion following \eqref{001}, the finiteness of $g$ is linked to the validity of the KPZ scaling relation $\chi=2\xi-1$.
This line of research began in the related setting of first passage percolation (FPP).
The problem of characterizing the weight distributions for which the FPP analogue of $g$ is finite 
was studied---and solved---in a number of groundbreaking works dating back several decades, see \cite{HWFirstPassagePercolationSubadditive1965,
    KinErgodicTheorySubadditive1968,
    WieWeakMomentConditions1980,
    CDLimitTheoremsPercolation1981,
    KesAspectsFirstPassage1986} and the references therein.
We will not discuss FPP further, but we encourage the interested reader to consult the excellent survey \cite{ADH50YearsFirstpassage2017}.

Unlike in FPP, 
there is no known characterization of the weight distributions for which the corresponding LPP model satisfies $g<\infty$.
However, significant progress towards a characterization was 
made in a series of works from the 1990s and 2000s.
The first result in this direction was obtained by 
Glynn and Whitt \cite{GWDeparturesManyQueues1991}, who showed that $g<\infty$ whenever $X$ has an exponential tail.
Subsequently, Cox, Gandolfi, Griffin, and Kesten \cite{CGGKGreedyLatticeAnimals1993}
(see also \cite{GKGreedyLatticeAnimals1994})
proved that $g<\infty$ under the weaker assumption that
\[
    \E\Bigl[X^2\max\{\log X,\, 0\}^{2+\epsilon}\Bigr] < \infty
    \qquad\text{for some } \epsilon>0.
\]
Finally, the state of the art was established by Martin \cite{MarLinearGrowthGreedy2002,MarLimitingShapeDirected2004}
(see also the survey \cite{MarLastpassagePercolationGeneral2006}),
who showed that $g<\infty$ under the assumption that
\begin{equation}\label{eq:martin_hypothesis}
    \int_0^\infty \P(X>t)^{1/2}\,\mrm{d}t < \infty.
\end{equation}
Note that \eqref{eq:martin_hypothesis} is only slightly stronger than $\E[X^2]<\infty$.
Moreover, \cite{CGGKGreedyLatticeAnimals1993}
showed that $g<\infty$ implies $\E[X^2]<\infty$, a result which, to our knowledge, 
is the strongest known necessary condition for $g<\infty$.
In view of the small discrepancy between the sufficient condition \eqref{eq:martin_hypothesis} and the necessary condition $\E[X^2]<\infty$, 
it is natural to ask if one of the two conditions is actually equivalent to $g<\infty$.
In fact, Martin asked the same question in \cite[Section 9]{MarLinearGrowthGreedy2002}.
While this question has not seen progress since \cite{MarLinearGrowthGreedy2002}, the next result, a quick corollary of the proof of Theorem \ref{thm:main}, shows that $\E[X^2]<\infty$ is not sufficient for $g<\infty$.

\begin{maintheorem}\label{thm:secondmoment}
    Let $X$ be a non-negative random variable such that $\P(X>t) \sim \frac{1}{t^2(\log t)^\beta}$ for some $\beta \in (1,\frac32)$.  
    Clearly $X$ has finite second moment.
    Moreover, for LPP where the weights are i.i.d. copies of $X$, 
    \[
        \frac{L_n}{n}\to \infty \quad\text{almost surely}.
    \]
\end{maintheorem}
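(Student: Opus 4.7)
The proof proceeds as a direct adaptation of the argument used for \cref{thm:main}. The tail hypothesis $\P(X>t)\sim t^{-2}(\log t)^{-\beta}$ modifies the density of large weights at each dyadic scale only by a polylogarithmic factor: in a $2^k\times 2^k$ box, the typical largest weight is of order $2^k/k^{\beta/2}$, compared with $2^k$ in the $\beta=0$ case. Consequently, the multi-scale path construction underlying \cref{thm:main} produces, at each scale $k$, a contribution of order $n/k^{\beta/2}$ rather than the $n$ contribution in that theorem.

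Summing over scales and carrying out the combinatorial optimization exactly as in \cref{thm:main} should yield, for some $c>0$ and all sufficiently large $n$, a bound of the form
\[
\P\left(L_n\ge c\,\frac{n(\log n)^{3/4-\beta/2}}{\log\log n}\right)\ge 1-e^{-(\log n)^{97}}.
\]
The exponent $3/4-\beta/2$ is the natural degradation of the exponent $3/4$ from \cref{thm:main} induced by the per-scale factor $k^{-\beta/2}$, and the threshold $\beta<3/2$ in the statement corresponds exactly to requiring this exponent to be positive, i.e. to requiring the lower bound to be strictly superlinear in $n$.

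To pass from high probability to almost sure convergence, apply Borel--Cantelli along the dyadic subsequence $n=2^m$: the probabilities $e^{-(\log 2^m)^{97}}$ are summable in $m$, so $L_{2^m}/2^m\to\infty$ almost surely. For $2^m\le n<2^{m+1}$, any up-right path from $(0,0)$ to $(2^m,2^m)$ extends (by adding non-negative weights) to an up-right path from $(0,0)$ to $(n,n)$, so $L_n\ge L_{2^m}$, giving $L_n/n\ge \frac12\,L_{2^m}/2^m$ and therefore $L_n/n\to\infty$ almost surely.

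The main obstacle is verifying that the polylogarithmic correction $(\log t)^{-\beta}$ propagates correctly through the various concentration estimates and Poisson-type approximations used throughout the proof of \cref{thm:main}. Since the correction only introduces an extra $k^{-\beta/2}$ factor at each scale, and the structural parts of the argument (multi-scale skeleton, navigation between scales, transversal fluctuation control) are unaffected, the adaptation should be essentially routine; nevertheless, a careful bookkeeping is required to confirm that the resulting per-scale contributions combine into a well-concentrated lower bound and that the condition $\beta<3/2$ arises exactly at the threshold where this combination ceases to grow with $n$.
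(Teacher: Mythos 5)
Your proposal matches the paper's own argument: the paper redefines ``scale $k$'' to mean $X(v)\in\bigl(\tfrac{n}{2^k(\log\frac{n}{2^k})^{\beta/2}},\,\tfrac{n}{2^{k-1}(\log\frac{n}{2^k})^{\beta/2}}\bigr]$, checks that the tail bound $\P(v\text{ of scale }k)\gtrsim 2^{2k}/n^2$ (the only input to the multi-scale construction) still holds, and obtains exactly the lower bound $c\,n(\log n)^{3/4-\beta/2}/\log\log n$ with probability $1-e^{-(\log n)^{97}}$, with $\beta<\tfrac32$ being precisely the condition that this exponent is positive. Your Borel--Cantelli step (even without the dyadic reduction, since $e^{-(\log n)^{97}}$ is summable over all $n$) then gives the almost sure statement, so the approach is essentially identical to the paper's.
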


\begin{remark} As will be apparent from the proof (see Section \ref{sec:secondmoment}), that the power of log in Theorem \ref{thm:main} is strictly greater than $1/2$  is crucial to deduce the above result. 
\end{remark}

Our methods and results also admit natural generalizations to LPP in higher dimensions.
Precise statements are deferred to  Section \ref{sec:higher-dimensions}.\\

We conclude this introduction by briefly reviewing the study of positive-temperature versions of LPP which naturally raises counterpart open problems in hierarchical settings.

\subsection*{Directed polymers in random environments}
In this model the weights are rescaled by an inverse temperature parameter $\beta_n\approx n^{-\gamma}$ for some $\gamma\ge0$.
The analogue of the last passage time is the \emph{free energy} which is the logarithm of the \emph{partition function}.
The latter is the average over all directed paths $\pi$ of its Gibbs weight $e^{\beta_n \sum_{v\in\pi}X(v)}$.
Thus choosing a small $\beta_n$ reduces the impact of the largest weights on the overall fluctuations of the polymer free energy.
In the seminal work \cite{AKQIntermediateDisorderRegime2014}, 
for i.i.d. weights with a finite exponential moment,
the authors established a scaling limit of the partition function, when $\beta_n = n^{-1/4}$, to the solution to the multiplicative  stochastic heat equation whose logarithm is the Cole--Hopf solution to the KPZ equation
(the authors also conjectured that the same result holds as long as the weights have finite sixth moment, which was later proved in \cite{DZHighTemperatureLimits2016}).
The corresponding continuum polymer measure was constructed in \cite{AKQContinuumDirectedRandom2014}.
Building a parallel theory for log-correlated or hierarchical fields seems to be a novel research direction that will be explored in forthcoming work of the first two authors \cite{GG}.
\\

Not unrelated to this paper, there have also been interesting developments in the study of directed polymers in heavy-tailed environments.
While we encourage the reader to
consult the recent survey \cite{ZygDirectedPolymersRandom2024} for a more 
comprehensive review, we list a few lines of work here. 
In \cite{DZHighTemperatureLimits2016}, Dey and Zygouras 
extended the zero-temperature Flory argument (presented earlier in this introduction) to derive conjectural fluctuation exponents 
$(\chi,\xi)$ for every pair $(\al,\gamma)$.
\cite{DZHighTemperatureLimits2016} also rigorously verified their predictions in
the regime where $\al>\frac12$ and where $\gamma$ is tuned so that the polymer 
is diffusive (i.e. $\xi=\frac12$).
Auffinger and Louidor \cite{ALDirectedPolymersRandom2011} treated the regime where $\al\in(0,2)$ and where $\gamma$ is tuned so that the polymer is fully delocalized (i.e. $\xi=1$), thus generalizing many of the results of \cite{HMHeavyTailsLastpassage2007} to positive temperature.
Berger and Torri \cite{BTDirectedPolymersHeavytail2019} later established the full phase diagram for $\al\in(0,2)$, and actually discovered more phase transitions depending on logarithmic corrections to the inverse temperature $\beta_n$ (see also \cite{BTWNondirectedPolymersHeavytail2022} for analogous results for non-directed polymers).
In a different direction, Berger and Lacoin \cite{BLScalingLimitDirected2021,BLContinuumDirectedPolymer2022} considered heavy-tailed environments for $\al\in(0,2)$, constructing continuum polymer models and proving convergence of pre-limiting discrete models (see also \cite{BCLStochasticHeatEquation2023} for a study of the multiplicative stochastic heat equation driven by L\'evy noise).

We could only find two comments in the rigorous literature regarding the heavy-tailed polymer with 
$\al=2$. The first is  \cite[Remark 2.3]{BLScalingLimitDirected2021} presenting a discussion on the subtlety of identifying an intermediate disorder scaling limit for $\al=2$. The second is \cite[Remark 1]{DZHighTemperatureLimits2016} where it was asserted that for $\al=2$ and $\xi=\frac12$, the polymer free energy is of order  {$\beta_n n$} and exhibits 
Gaussian fluctuations of order $n^{-1/2}$ for a particular choice of $\beta_n$ which up to a logarithmic factor scales like $n^{-3/4}$.
The growth rate of the free energy along with an application of Markov's inequality implies that under the polymer measure the energy of a typical path is $O(n)$, in contrast to the lower bound for the LPP geodesic in Theorem \ref{thm:main}.

\subsection*{Organization}
In Section \ref{iop} we outline our arguments.
In Section \ref{sec:preliminaries} we record notation and preliminary lemmas.
In Section \ref{sec:superlinear1} we prove Theorem \ref{thm:main}.
In Section \ref{sec:secondmoment} we prove Theorem \ref{thm:secondmoment}.
In Section \ref{sec:conc} we prove Theorem \ref{thm:conc} and a counterpart result in Proposition \ref{prop:fluc-lower-bound}.
In Section \ref{sec:brw} we prove Theorems \ref{thm:brw} and \ref{thm:conc-brw}.
In Section \ref{sec:higher-dimensions} we present the higher-dimensional analogues of our results.

\subsection*{Acknowledgements}
SG thanks James Martin for telling him about the question answered by Theorem \ref{thm:secondmoment} and Ewain Gwynne for discussions on the broad topic of models of directed metrics driven by the Gaussian free field. 
SG and VG also thank Kaihao Jing for helpful discussions.
We are grateful to the anonymous referees whose meticulous feedback improved the presentation.

SG was partially supported by NSF Career grant 1945172.
VG was supported by the NSF Graduate Research Fellowship Program under Grant No. DGE-2146752.
KN was supported by Samsung Science and Technology Foundation under Project Number SSTF-BA2202-02.
Part of this work was completed during KN's visit to UC Berkeley in the summer of 2024.
\\

In the next section we outline the key ideas underlying our arguments. 

\section{Proof ideas}\label{iop}
While Theorems \ref{thm:main} and \ref{thm:brw} are about lower bounds, we begin with a quick discussion on the straightforward upper bounds. 
For the BRW an $O(n\log n)$ upper bound is trivial since it is well known that the maximum of the BRW is $O(\log n)$ (see e.g. \cite{ZeiBranchingRandomWalks2016}), so we focus on the heavy-tailed $\al=2$ case. 
The heuristic in Section \ref{intro:critical-points} can be adapted to prove that $L_n = O(n\log n)$ with high probability. 
As mentioned, for i.i.d. $\al=2$ heavy-tailed weights, the largest weight in the box $\dmn$ is of order $n$.
It follows that with high probability the last passage time can be decomposed as:
\begin{align*}
    L_n &= \sum_{k=0}^{\Theta(\log n)}\sum_{v\in\Gamma_n}X(v)\1_{\frac{n}{2^{k}}<X(v)\le \frac{n}{2^{k-1}}}
    + \sum_{v\in\Gamma_n}X(v)\1_{0\le X(v) \le 1}\\
    &\le \sum_{k=0}^{\Theta(\log n)}\max_{\pi}\sum_{v\in\pi}X(v)\1_{\frac{n}{2^{k}}<X(v)\le \frac{n}{2^{k-1}}}
    + \max_\pi \sum_{v\in\pi}X(v)\1_{0\le X(v) \le 1},
\end{align*}
where all the maxima are over up-right directed lattice paths from $(0,0)$ to $(n,n)$.
We denote by $L^{(k)}_n$ the $k$\textsuperscript{th} summand above (including the term with weights $X(v)\1_{0\le X(v)\le 1}$).
By the independence of the weights, the distribution of $L^{(k)}_n$ is approximately that of $\smash{\frac{n}{2^k}}\cdot\cL_n^{(k)}$, where $\cL_n^{(k)}$ denotes the last passage time from $(0,0)$ to $(n,n)$ on a Poisson point process of intensity $\smash[b]{\frac{2^{2k}}{n^2}}$.
By the homogeneity of the Poisson point process, $\cL_n^{(k)}$ is equal in distribution to the last passage time from $(0,0)$ to $(2^k, 2^k)$ on a Poisson point process of intensity $1$ and hence the expectation of $\cL^{(k)}_n$ is $O(2^k)$.  
This added over $\log n$ many scales bounds the expectation of $L_n$ by $O(n\log n)$.
To get tail bounds one only has to work a bit harder.
It is well known (see e.g. \cite{TalNewLookIndependence1996,ADHammersleysInteractingParticle1995}) that there exist $C,t_0>0$ such that for all $n,k\ge 1$ and $t\ge t_0$,
\begin{equation*}
    \P\bigl(\cL^{(k)}_n > t 2^k\bigr)
    \le e^{-C t 2^k}
\end{equation*}
(see also \cite{DZIncreasingSubsequencesIID1999,SepLargeDeviationsIncreasing1998} for sharp large deviations estimates).
It follows by a union bound that
(we adopt the conventions that all logarithms have base $2$, and that $c,c',c''$ denote positive constants which may decrease from one line to the next)
\begin{equation}\label{upper1234}
    \begin{split}
        \P\left(
        \sum_{k=0}^{\Theta(\log n)}
        L^{(k)}_n > c n\log n
    \right)
    &\le \sum_{k=0}^{\ceil{ \log\log n}-1}
    \P\left(
        L^{(k)}_n > \frac{c' n \log n}{2^k}
    \right)
    + \sum_{k=\ceil{\log\log n}}^{\Theta(\log n)}
    \P\bigl(
        L^{(k)}_n > c''n
    \bigr)\\
    &\approx \sum_{k=0}^{\ceil{\log\log n}-1}
    \P\bigl(
        \cL^{(k)}_n > c'\log n
    \bigr)
    + \sum_{k=\ceil{\log\log n}}^{\Theta(\log n)}
    \P\bigl(
        \cL^{(k)}_n > c''2^k
    \bigr)\\
    &\le \ceil{\log\log n}e^{-c'\log n}
    + e^{-c''\log n},
    \end{split}
\end{equation}
and therefore $L_n = O(n\log n)$ with high probability.

We now move on to the main ideas in the proof of Theorem \ref{thm:main}. For simplicity, in the present discussion we will work with the proxy Poisson superimposition model already alluded to above. That is, there are $\log n$ many independent Poisson point processes, where the $k\textsuperscript{th}$ one, describing the points at scale $k$, i.e., having weight $\frac{n}{2^k}$, has intensity $\frac{2^{2k}}{n^2}$. 
Note that the typical separation between points at the $k\textsuperscript{th}$ {scale} is of order $\frac{n}{2^k}$.

Our basic approach is a multi-scale construction where we build a path with large weight (henceforth called a \emph{heavy path}) scale by scale.
That is, we will inductively specify the points from the $k\textsuperscript{th}$ scale through which the heavy path passes.
It is instructive to view the restriction of a path to points coming from scales $\le k$ as the ``skeleton'' of the path at scale $k$, see Figure \ref{fig:skeletons}.

To explain the basic idea it suffices to consider points of just two consecutive scales given by point processes $\omega$ and $\wt{\omega}$ on $[0,n]^2$. We will take $\omega$ to be a Poisson process of intensity $1$ with all its points having weight $1$ and $\wt{\omega}$ to be a Poisson process of intensity $4$ whose points each have weight $1/2$. We will thus think of the noise as having two levels, Level 1 and Level 2 corresponding to $\omega$ and $\wt{\omega}$ respectively.

It is well known that for a Poisson process of intensity $1$ on a rectangular box of side lengths $x$ and $y$, the last passage time between the bottom-left and top-right corners is comparable to $\sqrt{xy}$ (see e.g. \cite{ADHammersleysInteractingParticle1995}).
This leads to a tradeoff that we must address in our construction: attempting to maximize the number of points from $\omega$ causes the corresponding skeleton to zig-zag a lot, leading to a loss in the weight accumulated from $\wt \omega$.
To see this, consider a sequence {$\SSP$} of ordered points (also viewed as a directed path)
\begin{equation}\label{skeleton1}
    (0,0)=(x_0,y_0) \preceq (x_1,y_1) \preceq \ldots \preceq (x_m,y_m) \preceq (x_{m+1},y_{m+1})=(n,n),
\end{equation}
where $(x_1,y_1),\dots,(x_m,y_m)\in\omega$.
Thus {$\mathscr{P}$} is a candidate for the skeleton at Level 1 of the heavy path.
Next suppose we want to add points in $\wt\omega$ to {$\SSP$}.
To maintain directedness, these added points must satisfy
\[
    (x_i,y_i)\preceq (\wt x_{i,1},\wt y_{i,1}) \preceq \ldots  \preceq (\wt x_{i,m_i}, \wt y_{i,m_i}) \preceq (x_{i+1}, y_{i+1})
\]
for all $i\in \lb 0, m\rb$, where the points $(\wt x_{i,j}, \wt y_{i,j})$ belong to $\wt\omega$.

From the preceding discussion it seems that, given {$\SSP$}, the maximum passage time collected from $\wt\omega$ should be comparable to 
\begin{equation}\label{energy}
 \frac{1}{2} \cdot 2\cdot \sum_{i=0}^m\sqrt{(x_{i+1}-x_{i})(y_{i+1}-y_{i})}
\end{equation}
(although the pre-factors cancel each other out, we kept them to emphasize that the factor $\frac12$ comes from the weight of the points in $\wt\omega$ and the factor $2$ from the fact that $\wt\omega$ has intensity $4$, not $1$).

Now, since $\sum_{i=0}^m (x_{i+1}-x_{i})=\sum_{i=0}^m (y_{i+1}-y_{i})=n$, the sum in \eqref{energy} is at most $n$ with equality achieved only if $x_i=y_i$ for all $i$.
This implies that unless the points in {$\SSP$} literally land on the diagonal, the constraint enforced by {$\SSP$} 
leads to some loss in the passage time contributed by $\wt\omega$.
Of course, the points in {$\SSP$} are random, belonging to the point process $\omega$, and will never lie on the diagonal.
This leads to a tradeoff between the number of points collected from $\omega$ and from $\wt\omega$, or more generally (as in the proof of Theorem \ref{thm:main}) between the number of points collected from a given scale and from all the subsequent scales. 

A strategy in light of this is to demand that {$\SSP$} does not venture too far from the diagonal, and more importantly that the linear segments $(x_i,y_i) \to (x_{i+1},y_{i+1})$ have slopes close to one (note that this needs to happen at \emph{all scales}).
This leads to the intriguing question of identifying the optimal threshold for ``closeness'' that ensures that the passage time under such a constraint is large enough while simultaneously producing slopes that allow subsequent scales to contribute.

The following algebra is instructive.
Suppose we impose the slope bounds
\begin{equation}\label{constraint}
    1-\d
    \le \frac{y_{i+1}-y_i}{x_{i+1}-x_i}
    \le 1+\d
\end{equation}
for some $\d=o(1)$ and all $i\in\lb 0,m\rb$.
Applying the worst-case lower bound for every $i$ would then yield
\begin{equation}\label{thin}
    \begin{split}
        \sum_{i=0}^m \sqrt{(x_{i+1}-x_i)(y_{i+1}-y_i)}
        = \sum_{i=0}^m (x_{i+1}-x_i)\sqrt{\frac{y_{i+1}-y_i}{x_{i+1}-x_i}}
        &\ge \sqrt{1-\d}\sum_{i=0}^m (x_{i+1}-x_i)\\
        &\sim \left(1-\frac{\d}{2}\right)n.
    \end{split}
\end{equation}
Thus in one step a loss of a factor of $(1-\frac{\delta}{2})$ is incurred and hence after $k$ many iterations the loss is $(1-\frac{\delta}{2})^k$.
Since we would ideally want to perform $\log n$ many iterations, as there are $\log n$ many Poisson layers superimposed on each other, this suggests that we may take $\delta=\frac{1}{\log n}$.
This leads to the task of estimating the maximum passage time coming from $\omega$ (which is simply $m$ in \eqref{skeleton1}) subject to the constraint \eqref{constraint}, as a function of $\delta$.
To do this we consider a cylinder around the diagonal (i.e. the line $y=x$ joining $(0,0)$ and $(n,n)$) of width $\sqrt{\delta}$ (see Figure \ref{fig:iop}).
Split it into smaller cylinders of length $\frac{1}{\sqrt \delta}$, so there are $n\sqrt{\delta}$ many of them.
\begin{figure}[th]
    \centering
    \includegraphics[width=0.35\textwidth]{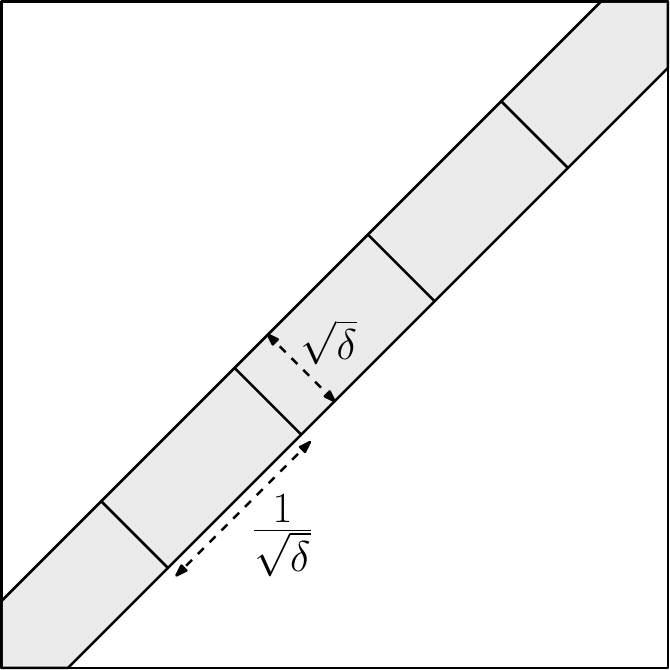}
    \caption{Depicted are $[0,n]^2$ (black square) and the cylinder (shaded gray) around the diagonal of width $\sqrt{\delta}$, divided into smaller cylinders of length $\frac{1}{\sqrt{\d}}$.}\label{fig:iop}
\end{figure}
There are two reasons for taking the smaller cylinders to have side lengths $\sqrt{\delta}$ and $\frac{1}{\sqrt \delta}$.
The first is that for two typical points in consecutive such cylinders, the slope of the line segment joining them satisfies \eqref{constraint}.
The second is that $\omega$, being a Poisson process of intensity $1$, typically has a point in a constant fraction of such cylinders due to their volumes being $1$.
Now forming {$\SSP$} by selecting one point from each cylinder leads to an overall passage time of order ${n}{\sqrt \delta}=\frac{n}{\sqrt{\log n}}$ 
collected from $\omega$. (Note that in our application, the Poisson processes have intensities of the form $\frac{2^{2k}}{n^2}$ and hence are much sparser than $\omega$ and $\wt\omega$.
To extend the above calculations to that setting the cylinders have to be dilated by a factor of $\frac{n}{2^k}$.)
Instead of collecting the maximum passage time of order $n$ per scale as in the proof of the {$O(n\log n)$} upper bound, the above strategy picks up only $\frac{n}{\sqrt{\log n}}$ per scale.
Repeating for $\log n$ many scales leads to a lower bound of $n \sqrt{\log n}$.
Note that this falls short of the $n(\log n)^{3/4}$ bound claimed by Theorem \ref{thm:main}. It turns out that the $n \sqrt{\log n}$ bound would also be insufficient to prove Theorem \ref{thm:secondmoment}.

We remedy this by observing that \eqref{thin} may be improved by 
leveraging \emph{cancellations} between the slopes of the various line segments. Namely, by accounting for the constraints $\sum_{i=0}^m(x_{i+1}-x_i)=\sum_{i=0}^m(y_{i+1}-y_i)=n$, we show that
\begin{equation}\label{thin1}
    \sum_{i=0}^m \sqrt{(x_{i+1}-x_i)(y_{i+1}-y_i)}
    \ge \bigl(1-\d^2\bigr)n.
\end{equation}
This allows us to choose $\delta=\frac{1}{\sqrt{\log n}}$ and still perform $\log n$ many iterations, thereby obtaining a contribution of $\frac{n}{(\log n)^{1/4}}$ per scale for an overall passage time of ${n}{(\log n)^{3/4}}$.

It is worth noting that the path we construct has transversal fluctuations of order $\frac{n}{(\log n)^{1/4}}$ (ignoring lower-order corrections), which in light of Remark \ref{lower1234} raises the intriguing question of identifying the true geodesic transversal fluctuations.

On a final technical note, in the actual implementation of the above strategy, we do not add points from \emph{every} scale to our path, but instead work with a sequence of scales that have a separation of order $\log\log n$.
This is needed to ensure that between each consecutive pair of points in the skeleton at a given scale, there is room for enough cylinders to yield useful concentration estimates.\\

The argument for the BRW setting (Theorem \ref{thm:brw}) is related but more complicated.
This is because while both noise environments are hierarchical, the BRW has long-range correlations on account of the same values being shared across multiple vertices, whereas in the heavy-tailed model the vertex weights are i.i.d..
Thus, in the BRW, the path must spend a lot of time in a given box to pick up a large amount of weight, unlike in the heavy-tailed setting where as long as the path passes through a large weight, its passage time becomes large regardless of its trajectory away from that weight.
Another restriction of a similar nature is that various candidates for the heavy path at a given scale must be well-separated, again due to the long-range correlations, to gain access to the larger-valued weights arising from fluctuations.
Both these features reduce the choices for candidate heavy paths, leading to the weaker bound in Theorem \ref{thm:brw} compared to Theorem \ref{thm:main}.

To keep the exposition light, we postpone further discussions on the proof of Theorem \ref{thm:brw} to Section \ref{sec:brw} beyond mentioning that in the multi-scale construction, unlike the heavy-tailed case, the choice of the separation of scales is dictated by an optimization problem.
The reasoning for the latter will be apparent from the discussion presented in Section \ref{sec:brw}.

Multi-scale analysis of geodesics in log-correlated environments was developed in the context of Liouville quantum gravity and the prelimiting Liouville FPP in 
\cite{DGFirstPassagePercolation2017,DGUpperBoundsLiouville2019,DDLiouvilleFirstpassagePercolation2019,DDDFTightnessLiouvilleFirst2020}.
There are two main differences between the above settings and ours.
The first is that these, being models of FPP, involve an optimization over \emph{all} paths, whereas we restrict to directed paths.
As already alluded, while for light-tailed i.i.d. environments both FPP and LPP are expected to be in the KPZ universality class, in log-correlated 
settings significant differences are expected to emerge.
Secondly, in models of Liouville quantum gravity the weights are given by the exponential of the underlying log-correlated field, in particular making them positive, whereas in our setting negative weights must be accounted for.
Very recently, a multi-scale path construction involving elements similar to the construction presented in this paper appeared in a paper of Ding, Gwynne, and Zhuang \cite{DGZPercolationThickPoints2026} on connectivity properties of thick point sets of high-dimensional log-correlated fields with applications to high-dimensional Liouville FPP.
A more in-depth discussion on the potential counterpart results for LPP in high-dimensional log-correlated environments
will be presented later in Remark \ref{rem:dgz}.
\\

The concentration result stated in Theorem \ref{thm:conc} is a consequence of the Efron--Stein inequality (recorded below as Proposition \ref{efron-stein}) and a dyadic scale argument adapted from \cite{MarLinearGrowthGreedy2002}.
A similar strategy can be applied in the BRW setting to establish Theorem \ref{thm:conc-brw} with a sub-optimal upper bound in \eqref{eq:brw-fluctuations}; this was done in the previous arXiv version of this paper.
Here we instead prove Theorem \ref{thm:conc-brw} by leveraging the Gaussianity of the BRW via the Borell--TIS inequality (a special case of which is recorded in Proposition \ref{borell-TIS}, see also \cite[Theorem 2.1.1]{ATRandomFieldsGeometry2007}).
\\

We end this discussion by briefly mentioning that the above theme of maximizing passage times under certain slope constraints enforced on the geometry of the maximizing path as encountered in \eqref{constraint} has previously appeared in work of Berger and Torri \cite{BTEntropycontrolledLastPassagePercolation2019,BTHammersleysLastPassagePercolation2021}.

\section{Preliminaries}\label{sec:preliminaries}

In this section we record our notation and collect some elementary geometric facts which will be in use throughout many of the arguments. 

\subsection{General notation}\label{sec:notation}

Throughout the paper we denote by $C,C',c,c'$, etc., finite positive constants
that do not depend on any parameters except possibly the joint law of the vertex weights of the LPP model under consideration.
Uppercase constants $C$ may increase from one line to the next,
and lowercase constants $c$ may decrease from one line to the next.

We adopt the usual Landau asymptotic notation.
For quantities $A,B\in\R$ depending on some parameter (usually $n\in\N$), we 
write $A=O(B)$ if $|A|\le CB$ for some constant $C>0$ as in the previous paragraph.
We write $A=\Theta(B)$ if $A=O(B)$ and $B=O(A)$.
We write $A=o(B)$ if $\frac{A}{B}\to 0$ as the limit is taken in some parameter
(usually $n\to\infty$), and $A\sim B$ if $A=B(1+o(1))$, i.e. if $\frac{A}{B}\to 1$.
We will sometimes also write $A\ls B$ or $B\gs A$ if $A=O(B)$, 
and $A\ll B$ or $B\gg A$ if $A=o(B)$.

For a measurable set $A\subset\R^2$, we denote by $|A|$ the area (Lebesgue measure) of $A$.
However, if $A$ is understood to be a finite set (e.g. if $A=B\cap\Z^2$ for some bounded $B\subset \R^2$), then we denote by $|A|$ the cardinality of $A$.
In our applications this should not cause confusion.
For real numbers $x\le y$ we write $\lb x,y\rb\coloneqq[x,y]\cap\Z$.

We take all logarithms to have base $2$.

\subsection{Plane geometry}
For $a=(a_1,a_2),b=(b_1,b_2)\in\R^2$, we write $a\preceq b$ if $a$ is weakly less than $b$ coordinate-wise, and $a\prec b$ if $a$ is strictly less than $b$ coordinate-wise. 
\begin{definition}[Rectangles,  slopes, and cylinders]\label{def:geometry}
We introduce the following notation:
\begin{enumerate}[label=(\roman*)]
    \item For $a,b\in\R^2$ with $a\prec b$, we denote by $\Rect(a,b)$ the rectangle whose bottom-left and top-right corners are $a$ and $b$ 
    respectively.\\

    \noindent In what follows we fix $a\prec b$ and write $R \coloneqq \Rect(a,b)$.\\

    \item We denote by $\Diag(R)$ the diagonal of $R$ between the bottom-left and top-right corners of $R$.
    In other words, $\Diag(R)$ is the line segment with endpoints $a$ and $b$.
    \item We denote by $\Slope(R)$ the slope of $\Diag(R)$.
    \item We will often abuse notation by writing $\Slope(a,b) \coloneqq \Slope(\Rect(a,b))$, 
    so that $\Slope(a,b)$ equals the slope of the line segment with endpoints $a$ and $b$:
    \[
        \Slope(a,b) = \frac{b_2-a_2}{b_1-a_1}.
    \]
    \item 
    For $r\in(0,1)$ we define
    \begin{equation*}
        \Cyl_r(R) \coloneqq \bigl\{ (x,y)\in R : |y-\Slope(R)x| \le r(b_2-a_2) \bigr\},
    \end{equation*}
    see Figure \ref{fig:geometry}.
    A straightforward calculation shows that  
    \footnote{For instance, $\Cyl_r(R)$ can be expressed as the disjoint union of a parallelogram of area $2r(1-r)|R|$ and two right triangles of area $\frac12r^2|R|$.}
    \[
        |\Cyl_r(R)| = (2-r)r|R|.
    \] 
    In particular, since $r\in(0,1)$ we get
    \begin{align}\label{eq:area_cyl}
        |\Cyl_r(R)|\ge r|R|.
    \end{align}
\end{enumerate}
\end{definition}

\begin{figure}[ht]
    \centering
    \includegraphics[width=0.2\linewidth]{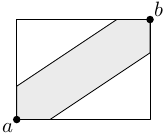}
    \caption{The black rectangle is $\Rect(a,b)$. The shaded gray region is the cylinder $\Cyl_r(\Rect(a,b))$, for some $r\in(0,1)$.}\label{fig:geometry}
\end{figure}

The following lemma asserts a lower bound for the number of lattice points within a cylinder in terms of the area of that cylinder.
This will allow us to work almost exclusively with the area in the proof of Theorem \ref{thm:main}, thereby significantly simplifying our notation.

\begin{lemma}[Number of lattice points compared to area]\label{lem:lattice_count}
    Fix a rectangle $R=\Rect(a,b)$ and $r\in(0,1)$.
    Suppose that
    $\Cyl_r(R)\cap\Z^2$
    is nonempty, which for instance holds whenever
    \begin{equation}\label{08508}
        r\cdot \min\{b_1-a_1,\ b_2-a_2\} \ge 10.
    \end{equation}
    Then
    \begin{align}\label{130}
        \bigl|\Cyl_r(R)\cap\Z^2\bigr|
         \ge \bigl|\Cyl_r(R)\bigr| - 100\max\{b_1-a_1,\ b_2-a_2\}.
    \end{align}
\end{lemma}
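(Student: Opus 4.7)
The approach is to reduce the lattice count to the area via a column-by-column decomposition, exploiting the convexity of $\Cyl_r(R)$ to keep the discrepancy between the two small.

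For each integer $i\in\lb\ceil{a_1},\floor{b_1}\rb$, let $L_i$ denote the length of the vertical slice $\Cyl_r(R)\cap(\{i\}\times\R)$ and $N_i$ denote the number of lattice points on that slice. Since $\Cyl_r(R)$ is the intersection of the rectangle $R$ with the strip $\{|y-\Slope(R)x|\le r(b_2-a_2)\}$, it is convex, so each slice is either empty or a closed interval $[l_i,u_i]$ of length $L_i$. A quick case analysis establishes the column bound $N_i\ge L_i-1$: if $L_i\ge 1$ then the slice contains an integer and $N_i=\floor{u_i}-\ceil{l_i}+1\ge (u_i-1)-(l_i+1)+1=L_i-1$; if $L_i<1$ the inequality is trivial since $N_i\ge 0>L_i-1$.

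Summing over $i$ gives $|\Cyl_r(R)\cap\Z^2|\ge \sum_i L_i-(b_1-a_1+1)$. The next step is to compare $\sum_i L_i$ to $\int_{a_1}^{b_1} L(x)\,\mrm{d}x=|\Cyl_r(R)|$, where $L(x)\coloneqq|\Cyl_r(R)\cap(\{x\}\times\R)|$. By convexity, $L$ is a non-negative concave (hence unimodal) function supported in a subinterval of $[a_1,b_1]$, and its maximum is bounded by the vertical thickness of the strip, namely $2r(b_2-a_2)$. A standard Riemann-sum estimate, obtained by splitting the support of $L$ at its mode into two monotone pieces and bounding the telescoping sums on each, yields $\left|\sum_i L_i-\int L\right|\le 2\max L\le 4(b_2-a_2)$. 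Combining, the total error is at most $4(b_2-a_2)+(b_1-a_1+1)$, comfortably below $100\max\{b_1-a_1,\,b_2-a_2\}$, establishing \eqref{130}.

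For the auxiliary assertion that \eqref{08508} implies $\Cyl_r(R)\cap\Z^2\neq\emptyset$, the same slice analysis suffices: under \eqref{08508} the interior of the strip contains integer columns $i$ where $L_i=2r(b_2-a_2)\ge 20$, which by the column bound above automatically contribute lattice points. The only mildly delicate step in the whole argument is the Riemann-sum comparison, which requires accounting for the two monotone pieces of the unimodal $L$; every other step is elementary.
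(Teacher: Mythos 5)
Your proof is correct and takes a genuinely different route from the paper's. The paper proves the lemma by applying Pick's theorem to the inscribed convex polygon $\wh{\sC}$ whose vertices are obtained by rounding the six vertices of the hexagon $\sC = \Cyl_r(R)$ to nearby lattice points: Pick gives $|\wh{\sC}\cap\Z^2|\ge|\wh{\sC}|$, and the area deficit $|\sC|-|\wh{\sC}|$ is controlled by $2\cdot\per(\sC)$ via a Fubini argument. Your approach instead slices $\Cyl_r(R)$ into vertical columns, uses the elementary column bound $N_i\ge L_i-1$, and then compares the Riemann sum $\sum_i L_i$ to the integral $\int L=|\Cyl_r(R)|$ via concavity of the slice-length function $L$. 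What your route buys: it never needs to identify the cylinder as a hexagon, estimate its perimeter, or construct a rounded inscribed polygon; it relies only on convexity of $\Cyl_r(R)$ (equivalently concavity of $L$) and a one-dimensional Riemann-sum comparison, which makes it a bit more self-contained. What the paper's route buys: Pick's theorem absorbs the column-by-column bookkeeping into a single classical statement, and the perimeter bound is uniform in the shape without appeal to unimodality.

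One small imprecision in the auxiliary claim: you assert that under \eqref{08508} there are columns with $L_i = 2r(b_2-a_2)\ge 20$. For $r>1/2$ the slice length never attains the full strip width $2r(b_2-a_2)$, and even for $r\le\tfrac12$ only columns in the interior of $R$ do. The correct elementary observation is that the end slices satisfy $L(a_1)=L(b_1)=r(b_2-a_2)\ge 10$, so by concavity $L(x)\ge r(b_2-a_2)\ge 10$ for every $x\in[a_1,b_1]$; since $b_1-a_1\ge 10/r>10$ there is at least one integer column $i$, and $L_i\ge 10\ge 1$ forces that column to contain a lattice point. The main inequality \eqref{130} is unaffected, since there you only use $\max L\le 2r(b_2-a_2)<2(b_2-a_2)$, which is correct.
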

While Lemma \ref{lem:lattice_count} may be proved using elementary arguments, below we present a proof involving the application of a special case of the classical Pick's theorem (e.g. \cite[p. 209]{CoxIntroductionGeometrySecond1989}).
\begin{proposition}[Pick's theorem]\label{prop:pick}
    Let $\sP$ be a  convex polygon whose vertices lie on the lattice $\Z^2$.
    Then we have that
    \[
        \left|\msf{P}\right| = 
        \left|\msf{P}\cap\Z^2\right| - \frac12\bigl|(\partial\msf{P})\cap\Z^2\bigr|-1.
    \]
    In particular,
    \[
        \left|\msf{P}\cap\Z^2\right|
        \ge \left|\msf{P}\right|.
    \]
\end{proposition}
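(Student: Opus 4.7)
The plan is to follow the classical three-step strategy for Pick's theorem: (i) prove an additivity lemma for lattice polygons glued along a shared edge, (ii) verify Pick's formula directly for axis-aligned lattice rectangles and for axis-parallel right triangles, and (iii) bootstrap via additivity and triangulation to obtain the formula for arbitrary lattice triangles and then for arbitrary convex lattice polygons.

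First I would prove the additivity lemma: suppose $\sQ$ is a lattice polygon expressible as $\sQ = \sQ_1 \cup \sQ_2$, where $\sQ_1$ and $\sQ_2$ are lattice polygons with disjoint interiors sharing a common edge $e$. If $e$ contains $k \ge 2$ lattice points (counting its two endpoints), then inclusion-exclusion gives
\[
    \bigl|\sQ \cap \Z^2\bigr| = \bigl|\sQ_1 \cap \Z^2\bigr| + \bigl|\sQ_2 \cap \Z^2\bigr| - k,
\]
while the $k-2$ non-endpoint lattice points of $e$ become interior points of $\sQ$, yielding
\[
    \bigl|\partial\sQ \cap \Z^2\bigr| = \bigl|\partial\sQ_1 \cap \Z^2\bigr| + \bigl|\partial\sQ_2 \cap \Z^2\bigr| - 2k + 2.
\]
Combined with $|\sQ| = |\sQ_1|+|\sQ_2|$, these identities show that the discrepancy $D(P) \coloneqq |P \cap \Z^2| - \tfrac{1}{2}|\partial P \cap \Z^2| - 1 - |P|$ is additive: $D(\sQ) = D(\sQ_1) + D(\sQ_2)$. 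In particular, Pick's formula for any two of $\sQ, \sQ_1, \sQ_2$ forces it for the third.

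Next I would verify Pick's formula directly for base shapes. For an axis-aligned rectangle $\sR = [0,w] \times [0,h]$ with $w,h \in \Z_{>0}$, direct counting gives $|\sR \cap \Z^2| = (w+1)(h+1)$, $|\partial\sR \cap \Z^2| = 2(w+h)$, and $|\sR| = wh$; the formula follows by substitution. Splitting $\sR$ along one of its diagonals produces two congruent right triangles. If the diagonal contains $d \coloneqq \gcd(w,h)+1$ lattice points, then by the symmetry each half-triangle contains $\tfrac{1}{2}\bigl((w+1)(h+1)+d\bigr)$ lattice points and has $w+h+d-1$ boundary lattice points, with area $\tfrac{wh}{2}$; a direct substitution confirms Pick's formula for each.

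For a general lattice triangle $T$, I would enclose $T$ in the smallest axis-aligned lattice rectangle $\sR$ and observe, by a short case analysis on which corners of $\sR$ lie outside $T$, that $\sR \setminus T$ decomposes into at most three axis-parallel right triangles $T_1, T_2, T_3$, each sharing a full edge with $T$ (some may be degenerate and contribute trivially). Iteratively gluing $T_1, T_2, T_3$ onto $T$ along these shared edges reassembles $\sR$, and each gluing is a valid application of the additivity lemma. Since $D(\sR) = 0$ and $D(T_i) = 0$ for each $i$, additivity of $D$ forces $D(T) = 0$, i.e., Pick's formula for $T$. Finally, for a convex lattice polygon $\sP$ with $n \ge 3$ vertices, triangulating by drawing the $n-3$ diagonals from a fixed vertex yields $n-2$ lattice triangles, and repeated application of the additivity lemma yields Pick's formula for $\sP$. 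The ``in particular'' statement is then immediate from $|\partial\sP \cap \Z^2| \ge 0$. The main obstacle is the careful bookkeeping in the additivity lemma, where the endpoints of the shared edge (which remain on $\partial\sQ$) must be distinguished from its non-endpoint lattice points (which become interior to $\sQ$), and similarly verifying that the sequential gluings in the triangle step proceed edge by edge without introducing extraneous overlaps.
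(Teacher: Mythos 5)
You should first be aware that the paper does not prove this proposition at all: Pick's theorem is stated as a classical fact with a pointer to \cite[209]{CoxIntroductionGeometrySecond1989}, and only the consequence $\left|\sP\cap\Z^2\right|\ge\left|\sP\right|$ is invoked, in the proof of \cref{lem:lattice_count}. So there is no in-paper argument to compare against, and supplying the standard self-contained proof is a legitimate (if different) route. Your outline is exactly the classical one: additivity of the discrepancy $D(\cdot)$ under gluing along a common edge, direct verification for axis-aligned rectangles and for right triangles obtained by halving a rectangle along its diagonal, then general triangles via the bounding box and a fan triangulation of the convex polygon. The additivity bookkeeping (the $k$ shared edge points, of which $k-2$ become interior) and both base cases, including the $\gcd(w,h)+1$ diagonal points and the central-symmetry argument, are correct, and the fan triangulation step and the final inequality (drop the nonnegative boundary term) are fine.

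There is, however, one step that is stated too strongly and is false as written: it is not true that for every lattice triangle $T$ the complement $\sR\setminus T$ of $T$ in its bounding box $\sR$ decomposes into at most three axis-parallel right triangles, each sharing a full edge with $T$. Minimality of $\sR$ only forces some vertex of $T$ to sit at a corner of $\sR$; if a second vertex sits at the opposite corner, the third vertex may lie in the interior of $\sR$. For example, $T$ with vertices $(0,0),(4,4),(3,1)$ has bounding box $[0,4]^2$, and the part of the complement below the diagonal edge is the non-convex quadrilateral with vertices $(0,0),(4,0),(4,4),(3,1)$, which cannot be written as a union of at most two axis-parallel right triangles glued to $T$ along full edges. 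The repair is routine and stays within your framework, since you have already verified Pick's formula for rectangles: in this configuration drop the vertical and horizontal segments from the interior vertex, producing two axis-parallel right triangles (whose hypotenuses are the two slanted edges of $T$) and one rectangle, and glue in an order in which each gluing is along a single common edge (for instance, rectangle to one of the right triangles first, then that trapezoid to $T$, then the remaining right triangle, then the half-box above the diagonal); alternatively, split $\sR$ along the diagonal through the two corner vertices and use the identity $D(\sQ)=D(\sQ_1)+D(\sQ_2)$ subtractively to transfer the formula. With that extra case handled, the argument is complete.
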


\begin{proof}[Proof of Lemma \ref{lem:lattice_count}]
    Note that $\sC\coloneqq \Cyl_r(R)$ is a convex hexagon; we label its vertices $v_1,\dots,v_6$.
    Let $\wh\sC$ be the convex hull of the points obtained by rounding each $v_i$ to the nearest lattice point in
    $\sC\cap\Z^2$
    (with ties broken in some arbitrary way).
    Note that $\sC\cap\Z^2$ is non-empty by hypothesis. 
    By convexity, $\wh{\sC}\subset\sC$.
    Moreover, observe that
    \[
        \sup_{a\in\partial\sC}\mrm{dist}(a, \widehat{\sC}) =
        \sup_{a\in\partial\sC}\inf_{b\in \partial\widehat{\sC}}\|a-b\|_1 \le 2,
    \]
    and thus it follows from Fubini's theorem that
    \[
        |\sC|-|\widehat{\sC}| \le 2\cdot \per(\sC).
    \]
    The perimeter is in turn upper bounded by
    \[
        \per(\sC)\le 50\max\{b_1-a_1, \ b_2-a_2\}
    \]
    (the coefficient $50$ is certainly not optimal).
    We deduce that
    \begin{align*}
        \bigl|\sC\cap\Z^2\bigr| \ge \bigl|\widehat{\sC}\cap\Z^2\bigr|
        &\ge |\widehat{\sC}|\\
        &\ge |\sC| - 100\max\{b_1-a_1,\ b_2-a_2\},
    \end{align*}
    where the second inequality holds by Pick's theorem.
    We are done.
\end{proof}

\begin{remark}
    It is not difficult to prove an analogue of \eqref{130} with the inequality reversed
    (for instance by using Pick's theorem in a less crude manner).
    However we do not pursue this here as we only need the lower bound in our applications.
\end{remark}

The next lemma, illustrated in Figure \ref{fig:slopebound}, will be used throughout the proof of Theorem \ref{thm:main}.
Here---and nowhere else in the paper---we violate the notational conventions of Section \ref{sec:notation} by using $c$ to denote a point in $\R^2$.
This local abuse of notation should not cause any confusion.
\begin{lemma}[Worst-case slope bounds]\label{lem:slopebound}
    Fix $a,b,c,d\in\R^2$ with $a\prec b\prec c\prec d$.
    Assume that
    \[
        \Slope(a,b)=\Slope(b,c)=\Slope(c,d)
    \]
    and that
    \begin{align}\label{eq:separation-hyp}
        c_i-b_i \ge \max\{b_i-a_i,\ d_i-c_i\}\quad\text{for } i\in\{1,2\},
    \end{align}
    i.e. the rectangles are similar and the middle one has (weakly) larger sides than the other two. 
    Then for all $r\in(0,1)$, for all $v\in\Cyl_r(\Rect(a,b))$, and for all $w\in \Cyl_r(\Rect(c,d))$, we have that
    \begin{align}\label{eq:slopebound}
        \frac{1}{1+2r} \le \frac{\Slope(v,w)}{\Slope(b,c)} \le 1+2r.
    \end{align}
\end{lemma}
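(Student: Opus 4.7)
The plan is to reduce the assertion to an elementary comparison between two similar linear combinations by parametrizing the cylinders in normalized coordinates. Let $\ell_i$ denote the width (horizontal side length) of the $i$-th rectangle for $i=1,2,3$, and let $s=\Slope(b,c)$ be the common slope, so each rectangle has height $s\ell_i$. Under these labels the hypothesis \eqref{eq:separation-hyp} reduces to the single condition $\max(\ell_1,\ell_3)\le \ell_2$.

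First I would parametrize. Writing $v=a+(p\ell_1,\,qs\ell_1)$ with $(p,q)\in[0,1]^2$, the cylinder condition $v\in\Cyl_r(\Rect(a,b))$ unpacks (after dividing by $s\ell_1$) to $|p-q|\le r$. Likewise $w=c+(p'\ell_3,\,q's\ell_3)$ with $(p',q')\in[0,1]^2$ and $|p'-q'|\le r$. Using $c-a=(\ell_1+\ell_2,\,s(\ell_1+\ell_2))$, a direct computation of $w-v$ then yields
\[
\Slope(v,w)=s\cdot\frac{N}{D},\qquad
N\coloneqq(1-q)\ell_1+\ell_2+q'\ell_3,\quad
D\coloneqq(1-p)\ell_1+\ell_2+p'\ell_3.
\]

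Finally I would estimate $N/D$. Since $N-D=(p-q)\ell_1+(q'-p')\ell_3$, the cylinder constraints and the separation hypothesis give
\[
|N-D|\le r\ell_1+r\ell_3\le 2r\ell_2,
\]
while both $N$ and $D$ are at least $\ell_2$ (the two endpoint parameters lie in $[0,1]$, so $(1-q)\ell_1,\,q'\ell_3,\,(1-p)\ell_1,\,p'\ell_3\ge0$). Hence $|N/D-1|\le 2r$ and symmetrically $|D/N-1|\le 2r$, which immediately yields $\frac{1}{1+2r}\le N/D\le 1+2r$, i.e.\ \eqref{eq:slopebound}.

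The only substantive step is choosing the right parametrization; once the algebraic form of $\Slope(v,w)$ is in hand, the separation hypothesis is exactly what is needed to absorb the perturbations $(p-q)\ell_1$ and $(q'-p')\ell_3$ into the shared ``backbone'' term $\ell_2$ present in both $N$ and $D$, and the rest is a two-line arithmetic estimate.
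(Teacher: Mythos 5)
Your proof is correct. It takes a somewhat different route from the paper's: the paper only sketches an argument, first using the invariance under the rescaling $(x,y)\mapsto(tx,y)$ to reduce to the case where all three rectangles are squares, and then asserting that the claim becomes apparent in the rotated coordinates $(x',y')=(x-y,x+y)$; you instead keep the common slope $s$ general, parametrize each cylinder in normalized coordinates, and bound the ratio $N/D$ directly. The computation is sound: the cylinder condition does reduce to $|p-q|\le r$ (resp.\ $|p'-q'|\le r$), the formula $\Slope(v,w)=s\,N/D$ is correct, and the two-line estimate $|N-D|\le r(\ell_1+\ell_3)\le 2r\ell_2\le 2r\min\{N,D\}$ yields both inequalities in \eqref{eq:slopebound}, with the hypothesis \eqref{eq:separation-hyp} entering exactly where it should. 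Your version has the merit of being fully written out, of making explicit that $D\ge\ell_2>0$ so the slope is well defined, and of showing (by taking $p=1$, $q=1-r$, $p'=0$, $q'=r$, $\ell_1=\ell_2=\ell_3$) that the constant $1+2r$ is sharp, which is what the paper's figure caption asserts via a ``direct calculation''; the paper's coordinate-change sketch is shorter but leaves the quantitative bookkeeping to the reader.
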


\begin{figure}[ht]
    \includegraphics[width=0.45\linewidth]{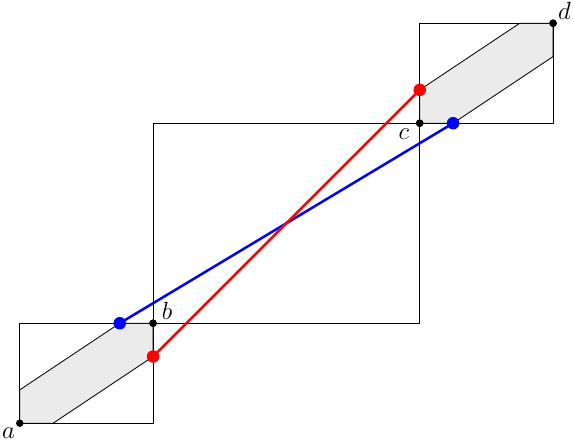}
\caption{
    An illustration of Lemma \ref{lem:slopebound}.
    The rectangles $\Rect(a,b), \Rect(b,c)$ and $\Rect(c,d)$ have identical slopes, and $\Rect(b,c)$ has (weakly) larger side lengths than the other two rectangles.
    Shaded in gray are the cylinders $\Cyl_r(\Rect(a,b))$ and $\Cyl_r(\Rect(c,d))$.
    Among all lines having one endpoint in each cylinder, the line with the largest slope is drawn in red, and the line with the smallest slope is drawn in blue.
    A direct calculation shows that the red line has slope $(1+2r)\Slope(b,c)$ and the blue line has slope $\frac{1}{1+2r}\Slope(b,c)$, which are the upper and lower bounds asserted in Lemma \ref{lem:slopebound}.
}\label{fig:slopebound}
\end{figure}

Lemma \ref{lem:slopebound} is elementary, so for brevity we only sketch a proof.
Since the hypotheses and conclusions of the lemma are invariant under rescaling of the form $(x,y)\mapsto (tx,y)$, it suffices to treat the case where all the slopes are equal to $1$, i.e. the rectangles are squares.
For this case the validity of the lemma becomes apparent upon changing to the coordinate system $(x',y')\coloneqq(x-y,x+y)$.

\section{Critical heavy-tailed last passage percolation}
\label{sec:superlinear1}
From now until the end of Section \ref{sec:conc} we will exclusively consider the $\al=2$ heavy-tailed LPP model.
We state the outcome of our multi-scale construction (see Section \ref{iop}) in the upcoming proposition which immediately implies Theorem \ref{thm:main}.
The rest of this section is devoted to the proof of the proposition, except for Section \ref{sec:secondmoment} which contains the proof of Theorem \ref{thm:secondmoment}.

To precisely state the relevant properties of the multi-scale construction it is convenient to use the following terminology.
\begin{definition}[Scales]\label{def:kscale}
    For $k\ge0$, we say that a vertex $v\in\Z^2$ is of \emph{scale $k$} if
    \[
        \frac{n}{2^k} < X(v) \le \frac{n}{2^{k-1}}.
    \]
\end{definition}
Note that $k=0$ corresponds to the largest values; this indexing convention is informed by the iterative nature of our multi-scale construction where we choose vertices from one scale at a time, beginning with the largest weights and ending with the smallest weights (details to follow).

For future reference we note that there exists
$\tail>0$
such that for all sufficiently large $n$,  all $k\le \frac{1}{10}\log  n$, and all $v\in\Z^2$,
\begin{align}\label{eq:tail}
    \P(v \ \textup{is of scale $k$}) \ge \tail \frac{2^{2k}}{n^2}.
\end{align}

The next proposition asserts that with high probability there exists an up-right directed lattice path from $(0,0)$ to $(n,n)$ that contains many vertices of scale $k$, for many values of $k$.
We write
\begin{align}\label{eq:s}
    \ss  \coloneqq  100 \log  \log n,
    \qquad \MM  \coloneqq  \left\lfloor \frac{\log  n}{10\ss}\right\rfloor,
    \qquad  \JJ  \coloneqq  (\log n)^{1/4}.
\end{align}
The parameter $\ss$ is the separation between the scales we will consider, and $\MM$ is the total number of scales involved in our multi-scale construction (we consider only $\frac{\log n}{10\ss}$ many scales instead of $\frac{\log n}{\ss}$ many for technical reasons that will become clear later).
This separation will be necessary to obtain desirable concentration estimates.
The parameter $\JJ$ will play a similar role as $\frac{1}{\sqrt{\d}}$ from Section \ref{iop}.

\begin{proposition}\label{prop:V}
    For all $n\ge1$ large enough such that $\MM\ge1$, there exist random sets of vertices
    \[
        \VV^{(1)}\subset \VV^{(2)}\subset \cdots\subset\VV^{(\MM)}\subset\dmn
    \]
    with the following properties.
    \begin{enumerate}[label=(\roman*)]
        \item \label{V1} 
        There exists an up-right directed lattice path from $(0,0)$ to $(n,n)$ that passes through every vertex in $\VV^{(\MM)}$.
        Equivalently,  $\VV^{(\MM)}$ is totally ordered with respect to $\prec$.
        \item \label{V2} 
        For all $\ell\in\lb1,\MM\rb$, the set $\VV^{(\ell)}\setminus\VV^{(\ell-1)}$ consists entirely of vertices of scale $\ell\ss$ (see Definition \ref{def:kscale}).
        Here, we write $\VV^{(0)} \coloneqq \bigl\{(0,0), (n,n)\bigr\}$.
        \item \label{V3} There exists $c>0$ such that for all sufficiently large $n$,
        \[
            \P\left(
                \bigl|\VV^{(\ell)}\setminus\VV^{(\ell-1)}\bigr| \ge c\frac{2^{\ell\ss}}{\JJ}\ \textup{for all}\ \ell\in\lb1,\MM\rb
            \right)
            \ge 1-e^{-(\log n)^{97}}.
        \]
    \end{enumerate}
\end{proposition}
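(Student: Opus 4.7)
The plan is to construct the $\VV^{(\ell)}$ inductively, implementing the multi-scale skeleton strategy outlined in \cref{iop}. Set $\VV^{(0)}=\{(0,0),(n,n)\}$. Supposing $\VV^{(\ell-1)}$ has been constructed and is totally ordered, for each consecutive pair $a\prec b$ of points in $\VV^{(\ell-1)}$ I form the rectangle $R=\Rect(a,b)$, subdivide $\Diag(R)$ into $N_\ell$ equal pieces to produce similar sub-rectangles $R_1,\ldots,R_{N_\ell}$ ordered along $\Diag(R)$, and then inspect the narrow cylinder $\Cyl_{r}(R_i)$ with $r:= c_0/\JJ^2$ (for a constant $c_0$ to be tuned). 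Whenever $\Cyl_{r}(R_i)$ contains a scale-$\ell\ss$ vertex, I select one (by a fixed deterministic rule) to include in $\VV^{(\ell)}$, but I only do so for \emph{even} indices $i$; using every other sub-rectangle guarantees that \cref{lem:slopebound} applies to the triples $(R_{2i},R_{2i+1},R_{2i+2})$ at the separation-hypothesis \eqref{eq:separation-hyp}. The parameters are taken to be $N_\ell := 2^{\ss+1}$ for $\ell\ge 2$, and $N_1 := 2^{\ss+1}/\JJ$ at the first step where $R=[0,n]^2$ is not yet of the typical inductive size.

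Under the running induction hypothesis $|\VV^{(\ell-1)}|\gtrsim 2^{(\ell-1)\ss}/\JJ$, each gap rectangle $R$ has side lengths comparable to $n\JJ/2^{(\ell-1)\ss}$, hence each sub-rectangle $R_i$ has side lengths of order $n\JJ/2^{\ell\ss}$, which exceeds $10/r$ for all $\ell\le \MM$, so that \cref{lem:lattice_count} applies. Combining \eqref{eq:area_cyl}, \cref{lem:lattice_count} and the tail estimate \eqref{eq:tail}, the expected number of scale-$\ell\ss$ lattice points in $\Cyl_{r}(R_i)$ is bounded below by a constant multiple of $r\JJ^2 \tail = c_0 \tail$. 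Choosing $c_0$ sufficiently large makes this at least $2$, so each sub-cylinder independently contains a scale-$\ell\ss$ vertex with probability at least $1-e^{-2}>\tfrac12$, the events being independent across distinct sub-cylinders because the underlying vertex weights are i.i.d.

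Property \ref{V2} holds by construction. Property \ref{V1} follows from \cref{lem:slopebound}: it forces the slope between two consecutive selected vertices to lie in $[(1+2r)^{-1}\Slope(R_{2i+1}),(1+2r)\Slope(R_{2i+1})]$, which is strictly positive, giving strict ordering within each gap; across gaps the ordering is automatic from the gap endpoints already being ordered. For property \ref{V3}, the number of selections in a single gap stochastically dominates $\Bin(N_\ell/2, 1/2)$, and by a standard Chernoff bound is at least $\tfrac14 N_\ell\asymp 2^\ss$ with probability $1-e^{-c N_\ell}\ge 1-e^{-c(\log n)^{100}}$; multiplied by the inductive gap count $\gtrsim 2^{(\ell-1)\ss}/\JJ$, this yields the desired $\gtrsim 2^{\ell\ss}/\JJ$ selections at level $\ell$. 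Since the total number of gaps across all $\MM$ levels is trivially bounded by $\MM\cdot 2^{\MM\ss+\ss}\le n^{1/4}$, a union bound produces the stated failure probability $\le n^{1/4}\cdot e^{-c(\log n)^{100}}\le e^{-(\log n)^{97}}$ for $n$ large.

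The main technical obstacle is keeping the inductive size estimate $|R|\asymp n^2\JJ^2/2^{2(\ell-1)\ss}$ intact over all $\MM=\Theta(\log n/\log\log n)$ iterations despite the slope distortion accrued at every level. The per-level worst-case factor from \cref{lem:slopebound} is $(1+2r)^{\pm 1}=(1+O(\JJ^{-2}))$, and iterated $\MM$ times gives a cumulative distortion $\exp\bigl(O(\MM/\JJ^2)\bigr)=\exp\bigl(O(\sqrt{\log n}/\log\log n)\bigr)$. Crucially this is subpolynomial, so even after composing across all levels the side lengths of each level-$\ell$ sub-rectangle remain much larger than $10/r$ and the lower bound \eqref{08508} required by \cref{lem:lattice_count} continues to hold. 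A secondary subtlety is that the deterministic fixed even-indexed subdivision used for selection ensures that gap sizes at level $\ell$ are automatically bounded below by the sub-rectangle side length $\asymp n\JJ/2^{\ell\ss}$, so no pathological small gap can arise that would disrupt the induction.
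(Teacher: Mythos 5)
Your proposal follows the broad skeleton of the paper's argument (inductive skeleton construction, skipping alternate sub-rectangles to enforce the separation hypothesis of \cref{lem:slopebound}, applying \cref{lem:lattice_count} and the tail estimate \eqref{eq:tail}, union-bounding over levels), but there are two related gaps that make the proof not go through as written.

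First, the subdivision count $N_\ell$ you use is fixed at $2^{\ss+1}$, independent of the gap it subdivides, and the justification rests on the claim that ``under the running induction hypothesis $|\VV^{(\ell-1)}|\gtrsim 2^{(\ell-1)\ss}/\JJ$, each gap rectangle $R$ has side lengths comparable to $n\JJ/2^{(\ell-1)\ss}$.'' This inference does not hold: a count lower bound on $|\VV^{(\ell-1)}|$ gives no upper bound on any individual gap, and with positive probability many consecutive cylinders are empty so that some gaps span several sub-rectangles while others span exactly one. Once gaps of varying size appear, a fixed $N_\ell$ over-subdivides the smallest ones: the minimum gap side degrades by a constant factor per level relative to the nominal $\JJ n/2^{\ell\ss}$, and after $\ell$ levels the expected number of scale-$(\ell+1)\ss$ vertices in the associated cylinders decays like $c_0/4^\ell$ --- eventually far below $1$, at which point those cylinders are empty with high probability and the construction stalls. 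The paper avoids this by taking the subdivision count adaptive, $m_i^{(\ell)}=\lfloor\frac{2^{(\ell+1)\ss}}{\JJ n}\sqrt{|R_i^{(\ell)}|}\rfloor$ as in \eqref{3095}, which forces the sub-rectangle area to be uniformly $\ge\JJ^2 n^2/2^{2(\ell+1)\ss}$ (see \eqref{308}) and hence a non-degrading lower bound on gap size (\cref{lem:apriori}\ref{apriori_rect}).

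Second --- and this is the heart of the matter --- your accounting of $|\VV^{(\ell)}\setminus\VV^{(\ell-1)}|$ chains a per-level multiplicative loss: each level you keep only a constant fraction (you say $\tfrac14 N_\ell$) of the sub-rectangles, so the constant $c$ in $|\VV^{(\ell)}\setminus\VV^{(\ell-1)}|\ge c\,2^{\ell\ss}/\JJ$ would decay geometrically in $\ell$, ending up as $c\approx 4^{-\MM}$, which is subpolynomial rather than a constant; the proposition requires a constant $c$ uniform in $\ell$. The paper's proof does not chain per-level estimates at all. Instead it controls the \emph{global} quantity $\sum_i m_i^{(\ell)}\gtrsim\frac{2^{(\ell+1)\ss}}{\JJ n}\sum_i\sqrt{|R_i^{(\ell)}|}$ and shows via \cref{lem:cancellations} that $\sum_i\sqrt{|R_i^{(\ell)}|}\ge(1-\d^2)^\ell n=\Theta(n)$ --- the key point being that the \emph{cancellation} of slope errors improves the naive per-level loss from $(1-\d)$ to $(1-\d^2)$, and only the latter survives $\MM\approx\log n/\log\log n$ iterations with $\d\asymp\rho/\JJ^2\asymp\sqrt{\log\log n/\log n}$. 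Your proposal never invokes \cref{lem:cancellations}; without it there is no way to maintain a constant $c$ with the choice $\JJ=(\log n)^{1/4}$. (Indeed, as \cref{iop} explains, without the cancellation one can only carry through the argument with $\JJ\asymp(\log n)^{1/2}$, giving the weaker $n\sqrt{\log n}$ lower bound, which is moreover insufficient for \cref{thm:secondmoment}.)

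A secondary omission: having revealed the large-weight vertices at all scales $\le\ell\ss$ changes the conditional distribution of the remaining weights, which the paper handles via \cref{lem:most-not-large} (showing $|S_\ell|$ is subpolynomial so the conditioning barely dents cylinder volumes); your proposal asserts independence across cylinders but does not address this conditioning.
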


As indicated above, Theorem \ref{thm:main} follows directly from  Proposition \ref{prop:V}:
\begin{proof}[Proof of Theorem \ref{thm:main}]
By Proposition \ref{prop:V}\ref{V1} and the non-negativity of the weights (Definition \ref{def:lpp}), we have that 
\[
    L_n \ge \sum_{v\in\VV^{(\MM)}} X(v)
    \ge \sum_{\ell=1}^\MM \sum_{v\in\VV^{(\ell)}\setminus\VV^{(\ell-1)}} X(v).
\]
By Proposition \ref{prop:V}\ref{V2}--\ref{V3} and Definition \ref{def:kscale}, the following lower bound holds for all $\ell\in\lb1,\MM\rb$ with 
probability at least $1-e^{-(\log n)^{97}}$:
\[
    \sum_{v\in\VV^{(\ell)}\setminus\VV^{(\ell-1)}} X(v) \ge c\frac{2^{\ell\ss}}{\JJ} \cdot \frac{n}{2^{\ell\ss}} = c\,\frac{n}{\JJ}.
\]
Combining the last two displays yields the lower bound
\[
    L_n 
    \ge \MM\cdot c\,\frac{n}{\JJ}
    \ge c\,\frac{n(\log n)^{3/4}}{\log\log n}.
\]
This completes the proof of Theorem \ref{thm:main}.
\end{proof}

The rest of this section is organized as follows.
In Section \ref{sec:multiscale} we construct the vertex sets $\VV^{(\ell)}$.
In Section \ref{sec:properties} we deduce a number of properties from the definition
of the construction, including Proposition \ref{prop:V}\ref{V1}--\ref{V2}.
In Section \ref{sec:vertex_sets_large} we prove Proposition \ref{prop:V}\ref{V3}, thus completing the proof of Proposition \ref{prop:V}.
Finally, in Section \ref{sec:secondmoment} we show how these arguments can be extended to prove Theorem \ref{thm:secondmoment}.

\subsection{Multi-scale construction}\label{sec:multiscale}
In this section we construct the vertex sets $\VV^{(\ell)}$ of Proposition \ref{prop:V}.
Since our construction is inductive, it will be convenient to define the set
\begin{align}\label{eq:base}
    \VV^{(0)} \coloneqq \bigl\{(0,0), (n,n)\bigr\}
\end{align}
which will serve as the base case for our induction.
We now assume that, for some 
$\ell\in\lb 0, \MM-1 \rb$, 
we have specified the distribution of a random totally ordered set of vertices
\[
    \VV^{(\ell)}=
    \left\{
        \vv^{(\ell)}_1\prec \vv^{(\ell)}_2\prec\dots\prec \vv^{(\ell)}_{|\VV^{(\ell)}|}
    \right\}
\]
(for notational simplicity we regard $\VV^{(0)}$ as a degenerate random set).
Given this data, we will construct $\VV^{(\ell+1)}$.
Before continuing we introduce some notation.
We write
\[
    \RR^{(\ell)} \coloneqq \left\{\Rect\bigl(\vv^{(\ell)}_i,\;\vv^{(\ell)}_{i+1}\bigr) : i\in\lb 1,\, |\VV^{(\ell)}| - 1 \rb
    \right\},
\]
as well as
\[
    R^{(\ell)}_i  \coloneqq  \Rect\bigl(\vv^{(\ell)}_i,\;\vv^{(\ell)}_{i+1}\bigr),\qquad i\in\lb 1,\, |\VV^{(\ell)}| - 1 \rb.
\]
We also define
\begin{equation}\label{eq:rho}
    \rr \coloneqq (\log \log n)^{1/2}.
\end{equation}
We will use $\rr$ to slightly widen the various cylinders encountered in the construction (recall the proof sketch in Section \ref{iop}), thereby simplifying the later analysis.
For these purposes the exact value $\rr=(\log\log n)^{1/2}$ is unimportant, and it can be replaced with any other $1\ll\rr\ls (\log \log n)^{1/2}$ (or even a sufficiently large constant) without affecting any of the upcoming arguments.

We are now ready to construct the set $\VV^{(\ell+1)}$.
Though we have strived to keep the notation light, the multi-scale nature of our
construction would make it difficult to prove Proposition \ref{prop:V} without some
notational overhead.
Before getting into details, let us briefly describe the strategy.
For each rectangle $R^{(\ell)}_i\in\RR^{(\ell)}$, {we consider certain sub-rectangles along the diagonal $\Diag(R^{(\ell)}_i)$.}
We add vertices of scale $(\ell+1)\ss$ from within the sub-rectangles.
To ensure that the slopes between the newly added vertices are still close to $1$, we will further demand that they lie in certain cylinders as indicated in Figure \ref{fig:construction} (recall also the proof sketch in Section \ref{iop}).
\\

We now make this precise.
Fix a rectangle $R^{(\ell)}_i\in\RR^{(\ell)}$.
We describe the sub-rectangles of $R^{(\ell)}_i$ alluded to above now.

We cover $\Diag(R^{(\ell)}_i)$ by rectangles $R^{(\ell)}_{i,j}$ of the same aspect ratio as $R^{(\ell)}_i$ and of area $\sA>0$ (to be specified momentarily), such that the $R^{(\ell)}_{i,j}$ have their bottom-left and top-right corners on $\Diag(R^{(\ell)}_i)$ and such that their interiors are pairwise disjoint 
(see Figure \ref{fig:construction}, Step 1).
Let $m$ be the number of such rectangles.
Then $m$ satisfies 
\[
    \frac{\bigl|R^{(\ell)}_i\bigr|}{m^2} = \sA,\quad \text{or equivalently}\quad m=\frac{1}{\sqrt{\sA}}\sqrt{\bigl|R^{(\ell)}_i\bigr|}.
\]
In view of the proof strategy outlined in Section \ref{iop}, we would like to set $\sA=\JJ^2\frac{n^2}{2^{2(\ell+1)\ss}}$, so that with high probability each cylinder $\Cyl_{\rr/\JJ^2}(R^{(\ell)}_{i,j})$, having area at least 
$\frac{\rr}{\JJ^2}\sA$
(recall \eqref{eq:area_cyl}), will contain a vertex of scale $(\ell+1)\ss$.
However, to avoid divisibility issues we instead work with the least $\sA\ge \JJ^2\frac{n^2}{2^{2(\ell+1)\ss}}$ for which the desired sub-rectangles exist.
\footnote{Note that in principle the area of $R^{(\ell)}_i$ could be so small that its diagonal does not admit such a covering. However, our construction ensures that this never happens (see Lemma \ref{lem:apriori}\ref{apriori_rect}), and we will not discuss this possibility further.}
Equivalently, we set 
\begin{align}\label{3095}
    m = m_i^{(\ell)}
    \coloneqq
    \left\lfloor\frac{2^{(\ell+1)\ss}}{\JJ n}\cdot \sqrt{\Bigl|R^{(\ell)}_i\Bigr|}\right\rfloor.
\end{align}
Thus, each rectangle $R^{(\ell)}_{i,j}$ for $j\in\lb 1,m_i^{(\ell)}\rb$ has the same slope as $R^{(\ell)}_i$,
and has area $\Bigl|R_{i,j}^{(\ell)}\Bigr|=\sA$ satisfying the following lower bound:
\begin{align}\label{308}
    \Bigl|R_{i,j}^{(\ell)}\Bigr|
    \ge
    \JJ^2\frac{n^2}{2^{2(\ell+1)\ss}}.
\end{align}

\begin{figure}[hbtp]
    \centering
    \includegraphics[width=\linewidth]{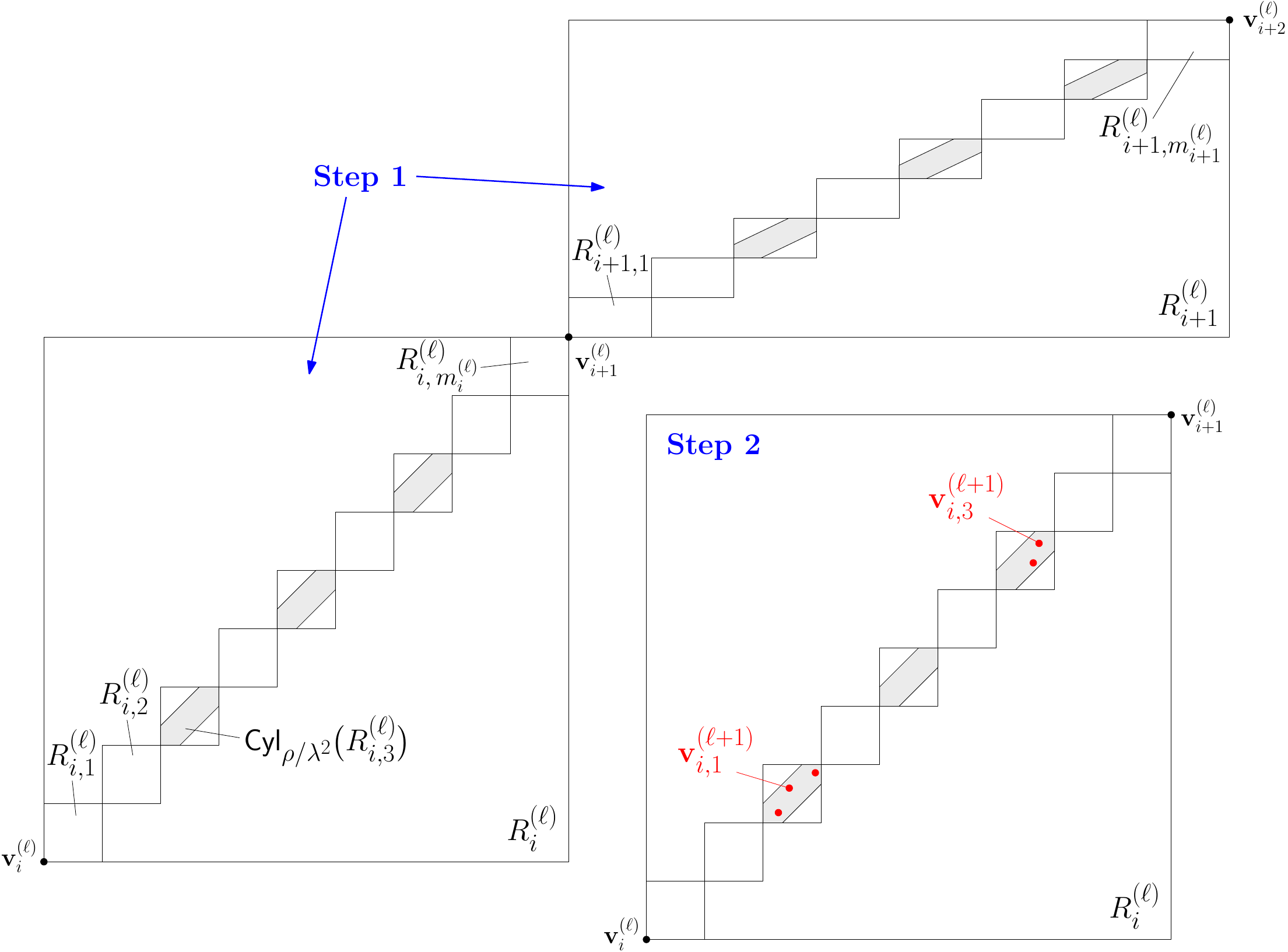}
    \caption{The construction of $\VV^{(\ell+1)}$.\\
            \textbf{Step 1:} Depicted are the rectangles $R^{(\ell)}_i$ and $R^{(\ell)}_{i+1}$.
            By definition, their bottom-left and top-right corners are consecutive vertices in $\VV^{(\ell)}$. 
            The set $\VV^{(\ell+1)}$ is constructed by adding vertices from each such rectangle to $\VV^{(\ell)}$ according to the following procedure.
            For notational simplicity we explain the procedure only for $R^{(\ell)}_i$.
            The construction begins by fixing congruent sub-rectangles $R^{(\ell)}_{i,j}$ along the diagonal of $R^{(\ell)}_i$, each with area $\JJ^2\frac{n^2}{2^{2(\ell+1)\ss}}(1+o(1))$.
            The number of such sub-rectangles is denoted by $m^{(\ell)}_i$ (as drawn, $m^{(\ell)}_i=9$, and $m^{(\ell)}_{i+1}=8$).
            Shaded in gray are the cylinders $\Cyl_{\rr/\JJ^2}(R^{(\ell)}_{i,{2j+1}})$ for $j\in\bigl\lb 1,\,\floor{\frac12 m^{(\ell)}_i}-1\bigr\rb$ 
            (the purpose of omitting every other cylinder will be explained later).\\
            \textbf{Step 2:}
            Note that this step is only depicted for $R^{(\ell)}_i$.
            Each shaded cylinder contains some number of vertices of scale $(\ell+1)\ss$, drawn in red.
            For each cylinder $\Cyl_{\rr/\JJ^2}(R^{(\ell)}_{i,2j+1})$ containing a vertex of scale $(\ell+1)\ss$, we select one such vertex $\vv^{(\ell+1)}_{i,j}$.
            The resulting set of vertices is denoted $\VV^{(\ell+1)}_i$.
            Finally, $\VV^{(\ell+1)}$ is defined to be the union of $\VV^{(\ell)}$ and all the sets $\VV^{(\ell+1)}_i$.
    }\label{fig:construction}
\end{figure}

Given the above, we define the random index set
\begin{equation}\label{301}
    \II_i^{(\ell+1)}  \coloneqq 
    \left\{
        j\in\bigl\lb 1,\,\floor{\tfrac{1}{2} m_i^{(\ell)}}-1\bigr\rb
        :
        \Cyl_{\rr/\JJ^2}\bigl(R^{(\ell)}_{i,2j+1}\bigr)
        \textup{ contains a vertex of scale $(\ell+1)\ss$}
    \right\}
\end{equation}
(see Figure \ref{fig:construction}, Steps 1 and 2).
That is, we are looking at every odd-indexed rectangle whose corresponding cylinder contains a vertex of scale $(\ell+1)\ss$ (excluding the first rectangle, and also excluding the last rectangle when $m^{(\ell)}_i$ is odd).
This choice of index set is to ensure that the slopes between points selected from consecutive rectangles are sufficiently close to $1$, as will be quantified more precisely later on.
For each $j\in \II_i^{(\ell+1)}$, we select one vertex $\vv^{(\ell+1)}_{i,j}$ of scale $(\ell+1)\ss$ from $\Cyl_{\rr /\JJ^2} (R^{(\ell)}_{i,2j+1})$ 
(see Figure \ref{fig:construction}, Step 2).
The precise manner in which the vertices $\vv^{(\ell+1)}_{i,j}$ are selected from the cylinders is irrelevant to the later analysis; we can, for instance, work with any deterministic rule, such as picking the vertex smallest in the lexicographic ordering.
Regardless of the selection procedure, we obtain a collection of vertices
\begin{equation}\label{eq:Vi}
    \VV^{(\ell+1)}_i
     \coloneqq 
    \left\{
        \vv^{(\ell+1)}_{i,j} : j \in \II_i^{(\ell+1)}
    \right\}.
\end{equation}
Notice that $\VV^{(\ell+1)}_i$ is totally ordered: it has been constructed by selecting at most one vertex from each of the rectangles $R^{(\ell)}_{i,j}$, and the latter are ordered by definition (see Figure \ref{fig:construction}, Step 1).

We repeat the above procedure for every rectangle $R^{(\ell)}_i\in\RR^{(\ell)}$, and at last define
\begin{align}\label{eq:def_V}
    \VV^{(\ell+1)} \coloneqq \VV^{(\ell)}\cup\bigcup_{i=1}^{|\RR^{(\ell)}|}\VV^{(\ell+1)}_i.
\end{align}
This ends the construction.

The upcoming Sections \ref{sec:properties} and \ref{sec:vertex_sets_large} are devoted to proving that the above construction indeed satisfies the claims in Proposition \ref{prop:V}.
We begin with some notation to streamline the analysis.
For a given rectangle $R\in\RR^{(\ell)}$ we will need to refer to the unique rectangle in $\RR^{(\ell-1)}$ that contains $R$; the following notation offers a convenient way to do so.
The reader may find it helpful to look at Figure \ref{fig:area} below for an illustration of the objects defined here, and may prefer to skip ahead to Section \ref{sec:properties} until this notation is first used (we will make a disclaimer whenever we use it).

For $R_i^{(\ell)}\in\RR^{(\ell)}$, we denote by $\RR^{(\ell+1)}_i$ the set of rectangles in $\RR^{(\ell+1)}$ that are contained in $R_i^{(\ell)}$.
In symbols,
\begin{align}\label{eq:Ri}
    \RR^{(\ell+1)}_i  \coloneqq  \left\{R\in\RR^{(\ell+1)} : R\subset R_i^{(\ell)}\right\}.
\end{align}
By construction the sets $\RR^{(\ell+1)}_1, \RR^{(\ell+1)}_2,\dots$ are pairwise disjoint, and satisfy
\begin{align}\label{eq:RR-decomp}
    \RR^{(\ell+1)} = \bigcup_{i=1}^{|\RR^{(\ell)}|}
    \RR^{(\ell+1)}_i.
\end{align}
Indeed, $\RR^{(\ell+1)}_i$ is the collection of all rectangles whose bottom-left and top-right corners are 
consecutive vertices in $\VV^{(\ell+1)}_i$, 
together with two more rectangles: one whose bottom-left corner is the bottom-left corner of $R^{(\ell)}_i$ and whose top-right corner is the ``first'' vertex in $\VV^{(\ell+1)}_i$, 
and one whose bottom-left corner is the ``last'' vertex in $\VV^{(\ell+1)}_i$ and whose top-right corner is the top-right corner of $R^{(\ell)}_i$ (see Figures \ref{fig:construction} and \ref{fig:area}).
In symbols,
\begin{equation}\label{eq:Ri2}
    \begin{split}
        \RR^{(\ell+1)}_i =
    &\biggl\{
        \Rect\Bigl(\vv^{(\ell+1)}_{i,j},\; \vv^{(\ell+1)}_{i,j+1}\Bigr)
        : j\in\II^{(\ell+1)}_i\setminus\{\sup\II^{(\ell+1)}_i\}
    \biggr\}\\
    &\qquad\quad\cup
    \biggl\{
        \Rect\Bigl(\vv^{(\ell)}_{i},\; \vv^{(\ell+1)}_{i,1}\Bigr)
    \biggr\}
    \cup
    \biggl\{
        \Rect\Bigl(\vv^{(\ell+1)}_{i,\, \sup\II^{(\ell+1)}_i},\; \vv^{(\ell)}_{i+1}\Bigr)
    \biggr\}.
    \end{split}
\end{equation}

\subsection{Properties of the vertex sets}\label{sec:properties}
We now deduce some properties of the vertex sets $\VV^{(\ell)}$ 
from the construction described in the previous subsection.

First we prove Proposition \ref{prop:V}\ref{V1}--\ref{V2}: 
\begin{proof}[Proof of Proposition \ref{prop:V}\ref{V1}--\ref{V2}]
    Recall that $\VV^{(0)} \coloneqq \bigl\{(0,0),(n,n)\bigr\}$ is totally ordered.
    Also, as discussed above, each set $\VV^{(\ell+1)}_i$ is totally ordered.
    Proposition \ref{prop:V}\ref{V1} now follows by induction along with the observations that 
    \[
        \VV^{(\ell+1)}_i\subset R^{(\ell)}_i = \Rect\bigl(\vv^{(\ell)}_i, \vv^{(\ell)}_{i+1}\bigr),
    \]
    and that each rectangle $R^{(\ell)}_i$ has corners given by consecutive vertices in the totally ordered set $\VV^{(\ell)}$ (the latter is totally ordered by the induction hypothesis).
    
    Every vertex in $\VV^{(\ell+1)}\setminus\VV^{(\ell)}$ is of scale $(\ell+1)\ss$ by construction (see \eqref{301} and \eqref{eq:def_V}), which is the content of
    Proposition \ref{prop:V}\ref{V2}.
\end{proof}

Next we prove the following lemma, which gives a priori bounds for the areas and slopes of the rectangles demarcated by consecutive vertices of any given scale.
\begin{lemma}[A priori estimates]\label{lem:apriori}
    The following is true almost surely.
    For all $\ell\in\lb1,\MM\rb$, every rectangle $R^{(\ell)}\in\RR^{(\ell)}$ enjoys the following two properties:
    \begin{enumerate}[label=(\roman*)]
        \item\label{apriori_rect}
        \emph{(Large area).}
        \[
            \bigl|R^{(\ell)}\bigr| \ge \JJ^2\frac{n^2}{2^{2\ell\ss}}.
        \]
        \item\label{apriori_slope}
        \emph{(Controlled slope).}
        There exists $R^{(\ell-1)} \in \RR^{(\ell-1)}$ such that $R^{(\ell)} \subset R^{(\ell-1)}$ and 
        \begin{equation*}
            \frac{1}{1+2\frac{\rr}{\JJ^2}}
            \le \frac{\Slope(R^{(\ell)})}{\Slope(R^{(\ell-1)})}
            \le 1+2\frac{\rr}{\JJ^2}.
        \end{equation*}

        Thus, iterating yields the bounds
        \begin{align}\label{eq:iteratedslope}
            \left(1+2\frac{\rr}{\JJ^2}\right)^{-\ell} \le 
            \Slope\bigl(R^{(\ell)}\bigr)
            \le \left(1+2\frac{\rr}{\JJ^2}\right)^{\ell}.
        \end{align}
        Note that since $\ell\le \MM \le C \frac{\log n}{\log\log n}$ and $\frac{\rr}{\JJ^2}=\left(\frac{\log\log n}{\log n}\right)^{1/2}$,
        these last bounds are roughly of the form
        $e^{\pm C\left(\frac{\log n}{\log\log n}\right)^{1/2}}$
        and hence are sub-polynomial in $n$ which will be crucial in our later applications.
    \end{enumerate}
\end{lemma}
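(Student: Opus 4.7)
The plan is to establish both claims by induction on $\ell$, exploiting the fact that each is a local assertion relating a single rectangle in $\RR^{(\ell)}$ to its parent in $\RR^{(\ell-1)}$, so the induction step reduces to a one-step geometric verification.

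For part (i), I would argue that every $R\in\RR^{(\ell)}_i$ contains at least one of the sub-rectangles $R^{(\ell-1)}_{i,k}$ used in constructing $\VV^{(\ell)}$. Indeed, if $R=\Rect(\vv^{(\ell)}_{i,j},\vv^{(\ell)}_{i,j'})$ with $j<j'$ consecutive in $\II^{(\ell)}_i$, then the $x$-coordinate of $\vv^{(\ell)}_{i,j}$ is at most the right edge of $R^{(\ell-1)}_{i,2j+1}$ and that of $\vv^{(\ell)}_{i,j'}$ is at least the left edge of $R^{(\ell-1)}_{i,2j'+1}$; since $2j'+1\ge 2j+3$, the rectangle $R$ contains $R^{(\ell-1)}_{i,2j+2}$ in full, and an analogous statement holds for the $y$-axis. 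The two endpoint rectangles of $\RR^{(\ell)}_i$ (those incident to $\vv^{(\ell-1)}_i$ or $\vv^{(\ell-1)}_{i+1}$) actually contain even more sub-rectangles, because by design $\II^{(\ell)}_i\subset\lb 1,\floor{\tfrac12 m^{(\ell-1)}_i}-1\rb$ excludes the two outermost indices. The bound then follows from \eqref{308}, which gives $|R^{(\ell-1)}_{i,k}|\ge\JJ^2 n^2/2^{2\ell\ss}$.

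For part (ii), the containment $R^{(\ell)}\subset R^{(\ell-1)}$ is immediate from the construction. To bound the slope, I would invoke \cref{lem:slopebound} with the three collinear rectangles
\[
    \Rect(a,b)=R^{(\ell-1)}_{i,2j+1},\qquad
    \Rect(b,c)=R^{(\ell-1)}_{i,2j+2}\cup\cdots\cup R^{(\ell-1)}_{i,2j'},\qquad
    \Rect(c,d)=R^{(\ell-1)}_{i,2j'+1}.
\]
All three share $\Slope(R^{(\ell-1)}_i)$ because the sub-rectangles $R^{(\ell-1)}_{i,k}$ tile $\Diag(R^{(\ell-1)}_i)$, so in particular the corners of $\Rect(b,c)$ lie on this diagonal. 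The middle rectangle has sides of length $(2(j'-j)-1)\ge 1$ times those of the outer ones, verifying \eqref{eq:separation-hyp}. Since $\vv^{(\ell)}_{i,j}$ and $\vv^{(\ell)}_{i,j'}$ lie in $\Cyl_{\rr/\JJ^2}$ of the outer rectangles by construction, \cref{lem:slopebound} yields the claimed bound on $\Slope(R^{(\ell)})$ relative to $\Slope(R^{(\ell-1)})$. The endpoint rectangles are handled identically, using that the relevant corners of $R^{(\ell-1)}_i$ lie on $\Diag(R^{(\ell-1)}_i)$ and hence trivially in the appropriate boundary cylinder. The iterated bound \eqref{eq:iteratedslope} follows by composing the single-scale estimate across the $\ell$ levels, and its sub-polynomial nature is read off from $\ell\le\MM\ls\log n/\log\log n$ together with $\rr/\JJ^2=(\log\log n/\log n)^{1/2}$.

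The main technical subtlety is verifying the hypothesis \eqref{eq:separation-hyp} of \cref{lem:slopebound} at the \emph{endpoint} rectangles of each $\RR^{(\ell)}_i$; this is precisely why the index set $\II^{(\ell)}_i$ is defined to start at $j=1$ (excluding $R^{(\ell-1)}_{i,1}$) and to stop at $j=\floor{\tfrac12 m^{(\ell-1)}_i}-1$ (excluding $R^{(\ell-1)}_{i,m^{(\ell-1)}_i}$). The resulting buffer of at least one sub-rectangle on each side of $R^{(\ell-1)}_i$ guarantees the geometric separation needed to invoke \cref{lem:slopebound} uniformly across all of $\RR^{(\ell)}_i$.
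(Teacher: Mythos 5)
Your proposal follows essentially the same route as the paper's proof: the area bound comes from observing that consecutive selected vertices sit in odd-indexed sub-rectangles and hence sandwich a full sub-rectangle of area at least $\JJ^2 n^2/2^{2\ell\ss}$, and the slope bound comes from \cref{lem:slopebound} applied with the block of skipped sub-rectangles as the middle rectangle, exactly as in the paper's induction.

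One caveat on your "main technical subtlety": the claim that truncating $\II^{(\ell)}_i$ at $\floor{\tfrac12 m^{(\ell-1)}_i}-1$ leaves a buffer of at least one sub-rectangle on \emph{each} side is not accurate at the top-right end when $m^{(\ell-1)}_i$ is even, since then the largest admissible odd index is $m^{(\ell-1)}_i-1$, so the cylinder containing the last selected vertex is adjacent to $R^{(\ell-1)}_{i,m^{(\ell-1)}_i}$ and the configuration you describe (outer rectangles $R^{(\ell-1)}_{i,m^{(\ell-1)}_i-1}$ and $R^{(\ell-1)}_{i,m^{(\ell-1)}_i}$) has no middle rectangle, so \eqref{eq:separation-hyp} cannot be invoked verbatim. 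The conclusion still holds precisely because $\vv^{(\ell-1)}_{i+1}$ lies exactly on the diagonal: either check the slope directly (the selected vertex deviates from the diagonal by at most $\tfrac{\rr}{\JJ^2}$ times the sub-rectangle height while its horizontal separation from the corner is at least one sub-rectangle width, giving a ratio in $\bigl[1-\tfrac{\rr}{\JJ^2},\,1+\tfrac{\rr}{\JJ^2}\bigr]$), or apply \cref{lem:slopebound} with a fictitious congruent rectangle placed beyond that corner so that $R^{(\ell-1)}_{i,m^{(\ell-1)}_i}$ itself serves as the middle block; the paper's own proof is equally terse on this edge case, so this is a minor patch rather than a gap in your approach.
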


\begin{proof}
    We first verify the assertions of
    the lemma
    for $\ell=1$.
    Recalling that $\RR^{(0)}=\{R^{(0)}_1\}=\{[0,n]^2\}$,
    we see from \eqref{308} that the rectangles $R^{(0)}_{1,j}$ are \emph{squares} of area
    \[
        \Bigl|R^{(0)}_{1,j}\Bigr|\ge\JJ^2\frac{n^2}{2^{2\ss}}
    \]
    whose bottom-left and top-right corners lie on $\Diag([0,n]^2)$.

    Since we restricted to odd indices in \eqref{301}, it follows that every rectangle $R^{(1)}_i\in\RR^{(1)}$ contains a square $R^{(0)}_{1,j}$ for some $j$ (see Figure \ref{fig:area}).
    In particular,
    \[
        \Bigl|R^{(1)}_i\Bigr| \ge \Bigl|R^{(0)}_{1,j}\Bigr| \ge \JJ^2\frac{n^2}{2^{2\ss}},
    \]
    which is Lemma \ref{lem:apriori}\ref{apriori_rect} for $\ell=1$.
    Similarly, the restriction to odd indices ensures separation between consecutive vertices in $\VV^{(\ell+1)}_i$, thus allowing an application of Lemma \ref{lem:slopebound} (note the condition \eqref{eq:separation-hyp}).
    Hence, since $\Slope(R^{(0)}_1)=\Slope([0,n]^2)=1$, it follows that
    \[
        \frac{1}{1+2\frac{\rr}{\JJ^2}}\le \Slope(R^{(1)}_i)\le 1+2\frac{\rr}{\JJ^2}
    \]
    (see Figure \ref{fig:slope}).
    That $R^{(0)}_1 = [0,n]^2$ contains $R^{(1)}_{1,j}$ is by definition.
    This proves Lemma \ref{lem:apriori}\ref{apriori_slope} for $\ell=1$.

    Suppose now that the conclusions of Lemma \ref{lem:apriori} are true for some $\ell\in\lb1,\MM-1\rb$.
    We will use the argument of the previous paragraph to arrive at the same conclusions for $\ell+1$.
    Fix $R^{(\ell+1)}\in\RR^{(\ell+1)}$.
    By the discussion below \eqref{eq:RR-decomp},
    there exists a unique $i$ such that $R^{(\ell+1)}\subset R^{(\ell)}_i$ 
    (i.e. $R^{(\ell+1)}\in\RR^{(\ell+1)}_i$). 
    Then by \eqref{301}, the bottom-left and top-right corners of $R^{(\ell+1)}$ belong respectively to rectangles $R^{(\ell)}_{i,j_1}$ and $R^{(\ell)}_{i,j_2}$, for some $j_2-j_1\ge 2$ (see Figure \ref{fig:area}).
    In particular, $R^{(\ell+1)}$ contains some $R^{(\ell)}_{i, j}$ and thus
    \begin{align*}\label{398}
        \Bigl|R^{(\ell+1)}\Bigr| \ge \Bigl|R^{(\ell)}_{i,j}\Bigr| 
        \overset{\eqref{308}}{\ge} \JJ^2\frac{n^2}{2^{2(\ell+1)\ss}},
    \end{align*}
    which is just Lemma \ref{lem:apriori}\ref{apriori_rect}.
    By the same reasoning, the corners of the rectangles $R^{(\ell)}_{i,j_1}, R^{(\ell)}_{i,j_2}$ satisfy the hypotheses of Lemma \ref{lem:slopebound}, since by definition $\Slope(R^{(\ell)}_{i,j})=\Slope(R^{(\ell)}_i)$ for all $j$.
    It follows by Lemma \ref{lem:slopebound} that
\begin{equation}\label{relativeslope}
        \frac{1}{1+2\frac{\rr}{\JJ^2}}
        \le \frac{\Slope(R^{(\ell+1)})}{\Slope(R_{i,j}^{(\ell)})}
        \le 1+2\frac{\rr}{\JJ^2}
\end{equation}
    (see Figure \ref{fig:slope}), yielding Lemma \ref{lem:apriori}\ref{apriori_slope}.
\end{proof}

\begin{figure}[ht]
    \centering
    \begin{subfigure}[t]{0.45\linewidth}
        \includegraphics[width=\linewidth]{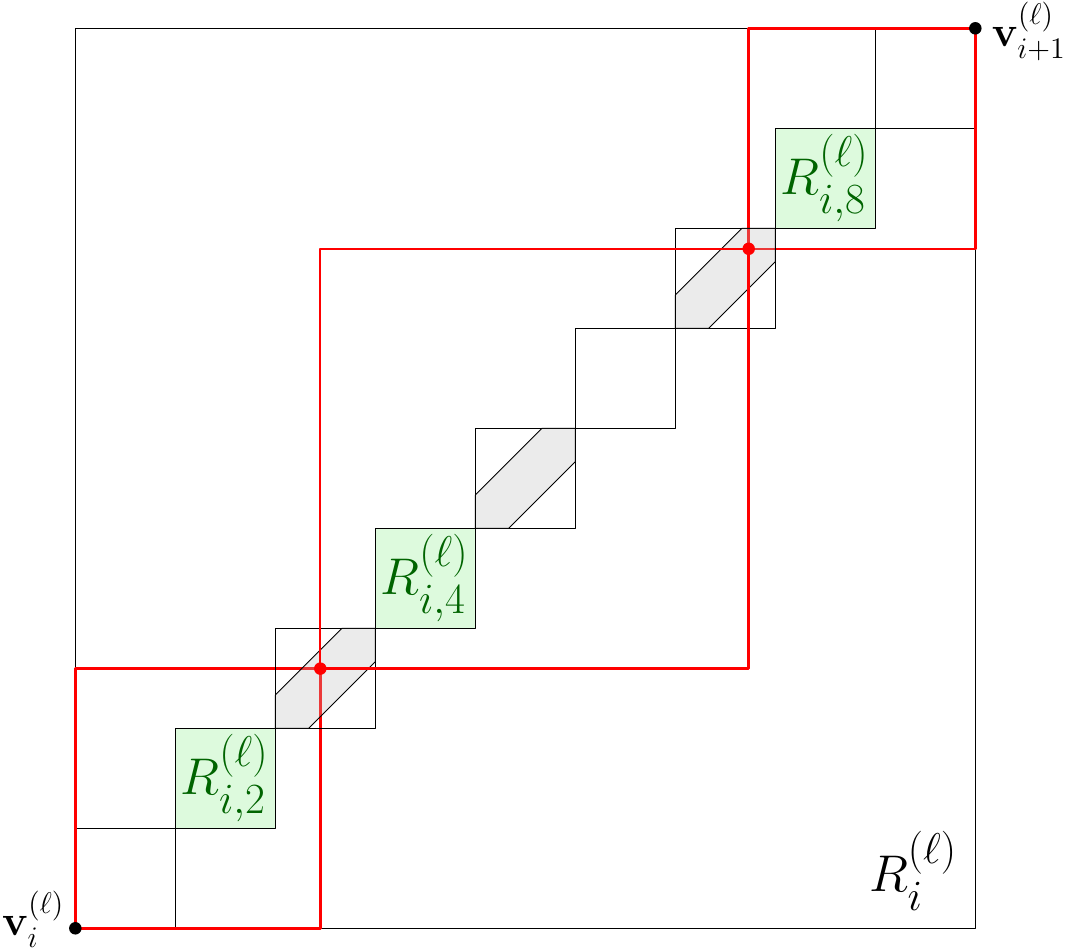}
        \caption{
            Depicted in red are the rectangles belonging to $\RR^{(\ell+1)}_i$.
            In words, these are the rectangles demarcated by consecutive vertices in $\VV^{(\ell+1)}_i\cup\bigl\{
            \vv^{(\ell)}_i,\vv^{(\ell)}_{i+1}
            \bigr\}$.
            Each red rectangle contains some $R^{(\ell)}_{i,j}$ (shaded green).
            This implies a lower bound for the area of each red rectangle.
        }\label{fig:area}
    \end{subfigure}\hfill
    \begin{subfigure}[t]{0.45\linewidth}
        \includegraphics[width=\linewidth]{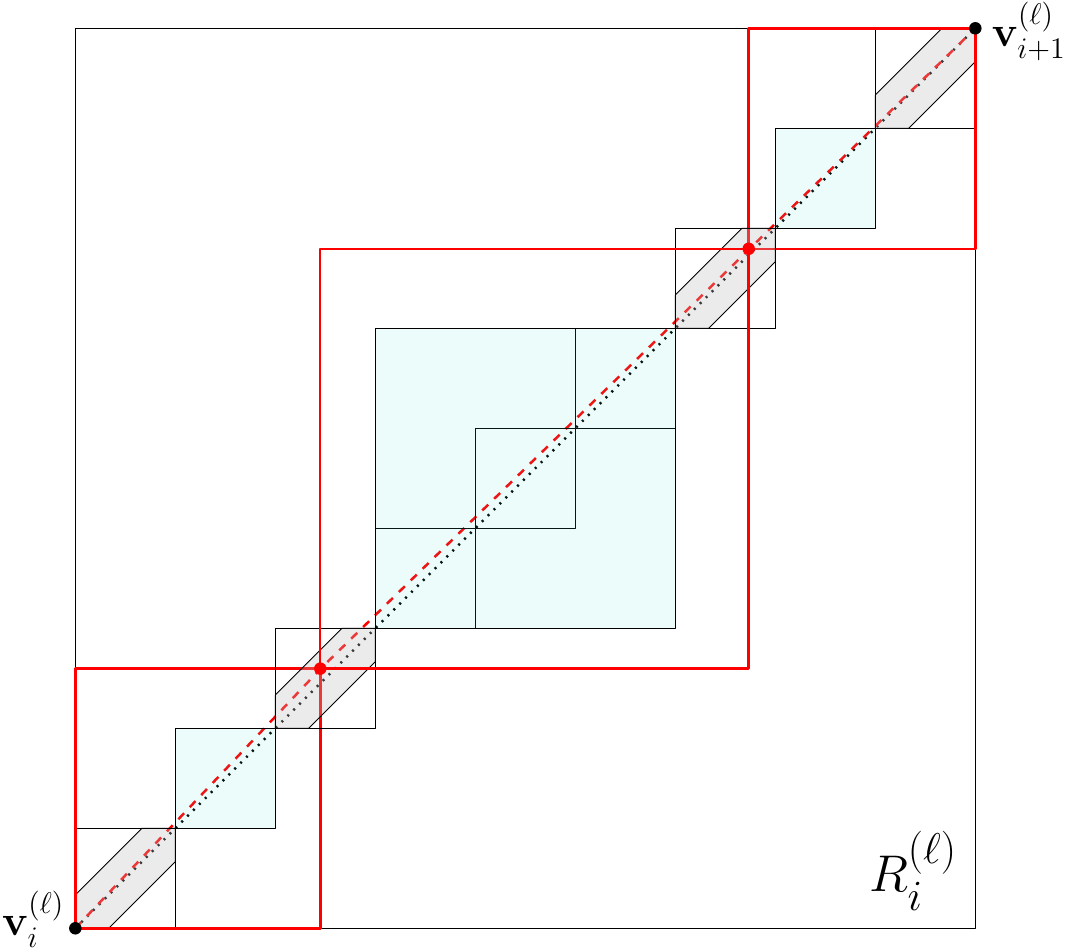}
        \caption{
            The dotted black line is the diagonal of $R^{(\ell)}_i$, and the dashed red lines are the diagonals of the rectangles in $\RR^{(\ell+1)}_i$ (red).
            We want to compare the slopes of these lines via Lemma \ref{lem:slopebound}.
            The bottom-left and top-right corners of each red rectangle lie in cylinders (shaded gray) contained in some $R^{(\ell)}_{i,j_1},R^{(\ell)}_{i,j_2}$ that are separated by a third rectangle (shaded blue) with the same aspect ratio and possibly larger side lengths, as required to apply Lemma \ref{lem:slopebound}.
        }\label{fig:slope}
    \end{subfigure}
    \caption{
        The proof of Lemma \ref{lem:apriori}.
    }\label{fig:apriori}
\end{figure}

\subsection{Vertex sets are large}\label{sec:vertex_sets_large}

In this subsection we prove Proposition \ref{prop:V}\ref{V3}.
For the reader's convenience we recall the parameters defined previously in \eqref{eq:s}, \eqref{eq:rho}:
\begin{align}
    \label{eq:recalled}
    \ss  \coloneqq  100\log\log n,
    \qquad \MM \coloneqq \left\lfloor\frac{\log  n}{10\ss}\right\rfloor,
    \qquad \JJ \coloneqq (\log n)^{1/4},
    \qquad \rr \coloneqq 
    (\log\log n)^{1/2}.
\end{align}
Also recall from \eqref{3095} that 
\begin{align*}
    m_i^{(\ell)}
    \coloneqq\left\lfloor\frac{2^{(\ell+1)\ss}}{\JJ n}\cdot \sqrt{\Bigl|R^{(\ell)}_i\Bigr|}\right\rfloor.
\end{align*}

We begin by establishing the following proposition, which asserts that with high probability a constant fraction of the cylinders $\Cyl_{\rr/\JJ^2}\bigl(R_{i,2j+1}^{(\ell)}\bigr)$ contain a vertex of scale $(\ell+1)\ss$,
for all $\ell\in\lb 0, \MM-1 \rb$ and all $i\in
\lb 1,|\RR^{(\ell)}|\rb$.
\begin{proposition}[Many cylinders contain large weights]\label{prop:largefrac}
    For all sufficiently large $n$,
    \begin{align*}
        \P\biggl(
            \bigl|\VV^{(\ell+1)}_i\bigr|\ge 
            \tfrac{1}{8} m_i^{(\ell)}
            \ \textup{for all} \ \ell\in\lb 0, \MM-1 \rb
            \ \textup{and all}\ i\in\lb 1, |\RR^{(\ell)}| \rb
        \biggr)
        \ge 1-e^{-(\log n)^{97}}.
    \end{align*}
\end{proposition}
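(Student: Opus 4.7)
The plan is to fix $\ell\in\lb 0,\MM-1\rb$, condition on an appropriate $\sigma$-algebra that freezes the cylinder geometry at scale $\ell+1$, and then apply a Chernoff bound to independent indicators. Let $\cF^{(\ell)}$ be the $\sigma$-algebra generated by $\bigl(X(v)\mathbf{1}\{X(v)>n/2^{\ell\ss}\}\bigr)_{v\in\Z^2}$; this reveals the exact weight of each vertex of scale $\le\ell\ss$ and, for every other (``unexposed'') vertex, only that $X(v)\le n/2^{\ell\ss}$. The entire construction of $\VV^{(\ell)}$, the sub-rectangles $R^{(\ell)}_{i,j}$, and the cylinders $C_{i,j}\coloneqq\Cyl_{\rr/\JJ^2}(R^{(\ell)}_{i,2j+1})$ is $\cF^{(\ell)}$-measurable. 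Moreover, conditional on $\cF^{(\ell)}$, the unexposed weights are mutually independent with the law of $X$ restricted to $[0,n/2^{\ell\ss}]$, and since the cylinders $C_{i,j}$ are pairwise disjoint, the events ``$C_{i,j}$ contains a vertex of scale $(\ell+1)\ss$'' are conditionally independent across $(i,j)$.

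The next step is a per-cylinder lower bound. From \eqref{eq:area_cyl} and \eqref{308} we have $|C_{i,j}|\ge\rr\, n^2/2^{2(\ell+1)\ss}$, and by \cref{lem:apriori}\ref{apriori_slope} the side lengths of $R^{(\ell)}_{i,2j+1}$ are comparable to $\JJ n/2^{(\ell+1)\ss}$ up to sub-polynomial slope factors; since $\rr\ll\JJ$ and $2^{\MM\ss}\le n^{1/10}$, the correction term in \cref{lem:lattice_count} is of lower order and $|C_{i,j}\cap\Z^2|\ge\tfrac12\rr\, n^2/2^{2(\ell+1)\ss}$. The conditional expected number of already-exposed vertices in $C_{i,j}$ is $O(\rr/2^{2\ss})=o(1)$ by a first-moment computation using \eqref{eq:tail}, so by Markov's inequality and a crude union bound over the at most $(n+1)^2$ candidate cylinders, the $\cF^{(\ell)}$-measurable event
\[
\cE_\ell\coloneqq\bigl\{\text{every }C_{i,j}\text{ retains at least }\tfrac14\rr\, n^2/2^{2(\ell+1)\ss}\text{ unexposed lattice points}\bigr\}
\]
satisfies $\P(\cE_\ell^c)\le e^{-(\log n)^{99}}$. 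On $\cE_\ell$, using \eqref{eq:tail} together with the fact that conditioning on $X\le n/2^{\ell\ss}$ rescales the scale-$(\ell+1)\ss$ probability by at most $1+o(1)$, the conditional probability that $C_{i,j}$ contains no scale-$(\ell+1)\ss$ vertex is bounded by $\bigl(1-c\,2^{2(\ell+1)\ss}/n^2\bigr)^{\tfrac14\rr\, n^2/2^{2(\ell+1)\ss}}\le e^{-c\rr}$.

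Finally, by \cref{lem:apriori}\ref{apriori_rect} combined with \eqref{3095}, the number of cylinders in $R^{(\ell)}_i$ satisfies $N_i\coloneqq\floor{\tfrac12 m_i^{(\ell)}}-1\ge 2^\ss/3=(\log n)^{100}/3$, and $\tfrac18 m_i^{(\ell)}\le\tfrac14 N_i$. On $\cE_\ell$, conditional on $\cF^{(\ell)}$, $|\VV^{(\ell+1)}_i|$ is a sum of $N_i$ independent Bernoullis each with success probability $\ge 1-e^{-c\rr}\ge\tfrac12$ (for $n$ large, since $\rr\to\infty$); a standard Chernoff bound yields $\P\bigl(|\VV^{(\ell+1)}_i|<\tfrac14 N_i\mid\cF^{(\ell)}\bigr)\mathbf{1}_{\cE_\ell}\le e^{-cN_i}\le e^{-c'(\log n)^{100}}$. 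Taking a union bound over $i\in\lb 1,|\RR^{(\ell)}|\rb$ (with $|\RR^{(\ell)}|\le(n+1)^2$) and then over $\ell\in\lb 0,\MM-1\rb$ (with $\MM\le\log n$), and adding $\sum_\ell\P(\cE_\ell^c)$, bounds the total failure probability by $\MM(n+1)^2 e^{-c'(\log n)^{100}}+\MM\, e^{-(\log n)^{99}}\le e^{-(\log n)^{97}}$ for all large $n$.

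The main technical obstacle is disentangling the randomness driving the cylinder geometry from the randomness driving the occupancy of those cylinders by scale-$(\ell+1)\ss$ weights. This is what the filtration $\cF^{(\ell)}$ accomplishes: it freezes everything determined by weights above the current threshold while leaving the remaining weights conditionally independent with essentially their original marginal structure. The event $\cE_\ell$ handles the small technical correction due to the handful of vertices per cylinder that have already been ``used up'' at previous scales.
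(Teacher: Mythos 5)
Your overall strategy matches the paper's proof closely: condition on the randomness that determines the cylinder geometry (your $\cF^{(\ell)}$ carries the same information as conditioning on the set $S_\ell = \{v : X(v)>n/2^{\ell\ss}\}$ and its values, as the paper does), exploit conditional independence of the unexposed weights, establish a per-cylinder success probability of the order $1-e^{-c\rr}$ via \cref{lem:lattice_count} and \eqref{eq:tail}, apply a binomial/Chernoff bound, and union bound over $i$ and $\ell$. This is the same route as the paper.

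However, there is a genuine gap in your bound on $\P(\cE_\ell^c)$. You claim that a first-moment computation followed by Markov's inequality and a union bound over the $\le(n+1)^2$ cylinders yields $\P(\cE_\ell^c)\le e^{-(\log n)^{99}}$. This cannot be right: the expected number of exposed vertices per cylinder is $O(\rr/2^{2\ss})=o(1)$, and Markov gives at best a polynomial-in-$n$ tail, so after a union bound over polynomially many cylinders you would obtain a bound that decays only polynomially in $n$ (or not at all), nowhere near $e^{-(\log n)^{99}}$. There is also an endogeneity issue lurking in the background: the cylinders $C_{i,j}$ are themselves $\cF^{(\ell)}$-measurable, so ``the expected number of exposed vertices in $C_{i,j}$'' is not a sum of independent indicators over a fixed set, and a naive per-cylinder first-moment estimate is not obviously justified. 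The paper sidesteps both problems cleanly by bounding $|S_\ell|$ \emph{globally}: \cref{lem:most-not-large} shows $\P(|S_\ell|>n^{0.5})\le e^{-Cn^{0.5}\log n}$ via a crude binomial-tail (not Markov) estimate, and since each realized cylinder has $\gtrsim \rr\, n^{1.8}$ lattice points by \eqref{690}, the $\le n^{0.5}$ exposed vertices are negligible regardless of where the cylinders lie. If you want to salvage a per-cylinder version of $\cE_\ell$, you would need a genuine Chernoff-type tail bound (not Markov) and would need to union bound over all \emph{deterministic} cylinder positions, but the paper's global approach is simpler.

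A minor arithmetic slip: you write $\tfrac18 m_i^{(\ell)}\le\tfrac14 N_i$ with $N_i=\floor{\tfrac12 m_i^{(\ell)}}-1$, but this inequality goes the wrong way since $N_i<\tfrac12 m_i^{(\ell)}$. To reach the constant $\tfrac18$ in the statement you should apply the Chernoff bound at threshold $\tfrac12 N_i$ (as the paper does, using $N_i\ge\tfrac14 m_i^{(\ell)}$), rather than at $\tfrac14 N_i$; your version as written only yields the proposition with constant $\tfrac{1}{16}$, which as the paper remarks is still enough for its downstream use, but does not match what you asserted.
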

The coefficient $\frac18$ holds no particular significance; for example, the proof can be easily modified to establish an identical result but with any $c<\frac12$ in place of $\frac18$.

To prove the above proposition we will reveal the vertex weights one scale at a time.
That is, we will condition on the vertices of scales $k\le\ell\ss$, and then sample conditionally the remaining vertices to check whether they are of scale $(\ell+1)\ss$.
Towards this, for $\ell\in\lb 0,\MM\rb$ we define the set
\begin{align}\label{largeset}
    S_\ell \coloneqq
    \left\{v\in\dmn : 
        X(v) > \frac{n}{2^{\ell\ss}}
    \right\}.
\end{align}
Note that conditional on $S_\ell$, the vertices of scale $(\ell+1)\ss$ in $\dmn$ are uniformly distributed in $\dmn\setminus S_\ell$.
Thus to show that any given cylinder
$\Cyl_{\rr/\JJ^2}\bigl(R_{i,2j+1}^{(\ell)}\bigr)$
contains a vertex of scale $(\ell+1)\ss$ with high probability, it will be useful to first prove that $S_\ell$ typically is not very large, so that after $S_\ell$ is revealed each cylinder still has ample room for vertices of scale $(\ell+1)\ss$.
This is where we will use the fact that $\MM$ was defined in \eqref{eq:s} to be $\floor{\frac{\log n}{10\ss}}$ and not $\floor{\frac{\log n}{\ss}}$.

\begin{lemma}[Most weights are not large]\label{lem:most-not-large}
     There exists $c>0$ such that for all sufficiently large $n$ and all $\ell\in\lb 0,\MM\rb$,
    \[
        \P\bigl(|S_\ell| > n^{0.5}\bigr) \le e^{-c n^{0.5}\log n}.
    \]
\end{lemma}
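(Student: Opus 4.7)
The plan is to use the fact that $\ell\le\MM$ forces the threshold defining $S_\ell$ to be large (polynomial in $n$ with exponent close to $1$), so that each vertex lies in $S_\ell$ with very small probability, and then to apply a standard Chernoff-type bound for sums of Bernoullis.

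First I would unpack the parameters. Since $\MM=\lfloor \log n / (10\ss)\rfloor$, for every $\ell\in\lb 0,\MM\rb$ we have $\ell\ss \le \tfrac{1}{10}\log n$, and hence
\[
    \frac{n}{2^{\ell\ss}} \ge n^{0.9}.
\]
Combined with the tail hypothesis $\P(X>t)\sim Ct^{-2}$, this yields a constant $C_0>0$ such that for all sufficiently large $n$ and all $v\in\dmn$,
\[
    p_\ell \coloneqq \P(v\in S_\ell) = \P\left(X(v)>\frac{n}{2^{\ell\ss}}\right) \le C_0\,\frac{2^{2\ell\ss}}{n^2} \le C_0 n^{-1.8}.
\]
Since the weights are i.i.d., $|S_\ell|$ is stochastically dominated by $\Bin(N,p_\ell)$ with $N=(n+1)^2$, and in particular $\E[|S_\ell|]\le 2C_0 n^{0.2}$ for all large $n$.

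Next I would apply the standard Chernoff bound for binomial tails at a level far above the mean. For $k\ge e^2\,\E[|S_\ell|]$,
\[
    \P(|S_\ell|\ge k) \le \left(\frac{e\,\E[|S_\ell|]}{k}\right)^{k}.
\]
Taking $k=\lceil n^{0.5}\rceil$, the right-hand side is at most $(2eC_0 n^{-0.3})^{n^{0.5}} = \exp\bigl(-n^{0.5}(0.3\log n - O(1))\bigr) \le e^{-C n^{0.5}\log n}$ for some constant $C>0$ and all sufficiently large $n$, uniformly in $\ell\in\lb 0,\MM\rb$.

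I do not expect any real obstacle here: the statement is essentially a concentration estimate for a binomial with mean $n^{O(1)}$ at the level $n^{0.5}$, and the key point is only to check that $\MM$ was defined small enough that $n/2^{\MM\ss}$ remains polynomially large, which is exactly the role of the factor $\tfrac{1}{10}$ in the definition of $\MM$ in \eqref{eq:s}.
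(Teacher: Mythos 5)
Your argument is correct and mirrors the paper's proof: both establish $\P(v\in S_\ell)\ls n^{-1.8}$ via $\ell\ss\le\tfrac{1}{10}\log n$, stochastically dominate $|S_\ell|$ by a binomial, and then apply an essentially equivalent tail bound (the paper uses $\binom{a}{b}\le(ae/b)^b$; you use the multiplicative Chernoff form, which is the same estimate in disguise). No gaps.
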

\begin{proof}
Note that $\frac{n}{2^{\MM\ss}}\ge n^{0.9}$ by definition of $\MM,\ss$ (recalled in \eqref{eq:recalled} above).
Now by the $\al=2$ power-law tail of $X$, we have $\P(X>n^{0.9}) \le Cn^{-1.8}$ for all sufficiently large $n$, for some $C>0$.
Therefore we have the stochastic domination
    \[
        \P\bigl(|S_\ell|>t\bigr)
        \le \P\bigl(|S_\MM|>t\bigr)
        \le \P\Bigl(
            \Bin\bigl((n+1)^2, \;\;Cn^{-1.8}\bigr)
            > t
        \Bigr).
    \] 
    It follows that 
    \begin{align*}
        \P\bigl(|S_\ell|>n^{0.5}\bigr)
        &\le \binom{(n+1)^2}{\ceil{n^{0.5}}}
        \left(Cn^{-1.8}\right)^{\ceil{n^{0.5}}}\\
        &\le \left(\frac{C'n^2 e}{\ceil{n^{0.5}}}\right)^{\ceil{n^{0.5}}}
        \left(Cn^{-1.8}\right)^{\ceil{n^{0.5}}}\\
        &= \left(n^{-0.3 + o(1)}\right)^{\ceil{n^{0.5}}}\\
        &\le e^{-cn^{0.5}\log n},
    \end{align*}
    where we used the standard bound 
    $\binom{a}{b} \le {\left(\frac{ae}{b}\right)}^b$.
    This finishes the proof.
\end{proof}

With the above lemma in place we can now finish the proof of Proposition \ref{prop:largefrac}.

\begin{proof}[Proof of Proposition \ref{prop:largefrac}]
Fix a rectangle $R_i^{(\ell)}\in\RR^{(\ell)}$ and an index $j\in\lb 1, m_i^{(\ell)} \rb$. 
As indicated we will first prove a lower bound on the
number of points in each cylinder $\bigl(\Cyl_{\rr/\JJ^2}(R^{(\ell)}_{i,j})\cap\Z^2\bigr)\setminus S_{\ell}$, where $S_\ell$ was defined in \eqref{largeset}.

We aim to apply Lemma \ref{lem:lattice_count} to the cylinder $\Cyl_{\rr/\JJ^2}(R^{(\ell)}_{i,j})$.
To this end, note that for all $\ell \le M$, the side lengths of the rectangles $R^{(\ell)}_{i,j}$ are at least polynomially large in $n$.
Indeed, by \eqref{308} and Lemma \ref{lem:apriori}\ref{apriori_slope} (area and slope bounds),
each side length is lower bounded by
\begin{align}\label{689}
    \min
    \Biggl\{\sqrt{\Bigl|R_{i,j}^{(\ell)}\Bigr|\cdot\Slope(R_i^{(\ell)})},
    &\quad\sqrt{\Bigl|R_{i,j}^{(\ell)}\Bigr|\cdot\frac{1}{\Slope(R_i^{(\ell)})}}\Biggr\}\nonumber\\
    &\ge \JJ\frac{n}{2^{(\ell+1)\ss}} \cdot \left(1 + 2\frac{\rr}{\JJ^2}\right)^{-\ell/2}
    \nonumber\\
    &\ge \JJ\frac{n}{2^{\MM\ss}} \cdot \left(1 + 2\frac{\rr}{\JJ^2}\right)^{-\MM/2}
    \nonumber\\
    &\ge \JJ  n^{0.9}
    \cdot
    \left(1 + C\frac{(\log\log n)^{1/2}}{(\log n)^{1/2}}\right)^{-C'\frac{\log n}{\log\log n}}
    \nonumber\\
    &=\JJ n^{0.9-o(1)}
    \nonumber\\
    &\ge n^{0.8}
\end{align}
for all sufficiently large $n$,
where the last two inequalities are obtained by substituting in the definitions of $\ss,\MM,\rr,\JJ$ (recalled in \eqref{eq:recalled} above).
Thus, since $\smash{\frac{\rr}{\JJ^2}}$ decays only polylogarithmically in $n$, the condition \eqref{08508} is clearly satisfied and hence we can apply Lemma \ref{lem:lattice_count} to the cylinder $\Cyl_{\rr/\JJ^2}(R^{(\ell)}_{i,j})$.
Doing so, we learn that for all sufficiently large $n$,
\begin{align}\label{690}
\Bigl|\Cyl_{\rr/\JJ^2} (R^{(\ell)}_{i,j})\cap\Z^2\Bigr|
&\overset{\hphantom{\eqref{308}}}{\ge}
\Bigl|\Cyl_{\rr/\JJ^2} (R^{(\ell)}_{i,j})\Bigr|
- 100n\nonumber\\
&\overset{\eqref{eq:area_cyl}}{\ge}\frac{\rr}{\JJ^2}\Bigl|R^{(\ell)}_{i,j}\Bigr|-100n\nonumber\\
&\overset{\eqref{308}}{\ge}\rr\frac{n^2}{2^{2(\ell+1)\ss}}- 100n\nonumber\\
&\overset{\hphantom{\eqref{308}}}{\gs} \rr\frac{n^2}{2^{2(\ell+1)\ss}},
\end{align}
where in the first line we used that the side lengths of $R^{(\ell)}_{i,j}$ are at most $n$ (because $R^{(\ell)}_{i,j}\subset[0,n]^2$), and in the last line we used that $\rr\frac{n^2}{2^{2(\ell+1)\ss}}\ge (\log \log n)^{1/2}\,n^{1.8}$.
By the same reasoning, the lower bound \eqref{690} is much larger than the high probability $n^{0.5}$ upper bound for $|S_\ell|$.

Next we derive a uniform lower bound for the number $m_i^{(\ell)}$ of rectangles $R^{(\ell)}_{i,j}$.
By \eqref{3095} we have
\[
    m_i^{(\ell)} \ge \frac{2^{(\ell+1)\ss}}{\JJ n}\cdot \sqrt{\bigl|R^{(\ell)}_i\bigr|} -1.
\]
For $\ell\ge1$ we have the lower bound $\sqrt{\bigl|R^{(\ell)}_i\bigr|} \ge \JJ \frac{n}{2^{\ell\ss}}$ (Lemma \ref{lem:apriori}\ref{apriori_rect}),
while for $\ell=0$ we have $\sqrt{\bigl|R^{(0)}_1\bigr|}=n$.
Therefore, for all sufficiently large $n$, we have that
\begin{align}\label{256}
    \lfloor\tfrac{1}{2} m_i^{(\ell)}\rfloor-1
    \ge \left\lfloor \frac{2^{\ss-1}}{\JJ} - \frac12 \right\rfloor -1
    \ge
    (\log n)^{99}
\end{align}
for all $\ell\in\lb 0, \MM-1 \rb$ and all $i\in\lb 1, |\RR^{(\ell)}| \rb$, where we plugged in the values $\ss=100\log \log n$ and  $\JJ=(\log n)^{1/4}$ (recall that all logarithms have base $2$ by convention).

We will now analyze the law of the number of cylinders containing a vertex of scale $(\ell+1)\ss$ conditional on $S_\ell$.
First we note that, conditional on $S_\ell$, the weights $X(v)$ with $v\not\in S_\ell$ are i.i.d. with common distribution $\P\bigl(X(v)\in\cdot\,\big|\,X(v)\le\frac{n}{2^{\ell\ss}}\bigr)$.
Under this conditioning, the probability of being of scale $(\ell+1)\ss$ is 
$\P\left(\frac{n}{2^{(\ell+1)\ss}} < X \le \frac{n}{2^{(\ell+1)\ss-1}}\;\Big|\;X\le \frac{n}{2^{\ell\ss}}\right)\ge \tail\, \frac{2^{2(\ell+1)\ss}}{n^2}$ for all sufficiently large $n$.
It follows that
for all sufficiently large $n$,
\begin{align}\label{691}
    \P\biggl(
        \Cyl_{\rr/\JJ^2}(R^{(\ell)}_{i,j})
        \textup{ contains a vertex of scale }
        &(\ell+1)\ss\;\bigg|\;S_\ell
    \biggr)\nonumber\\
    &\overset{\hphantom{\eqref{690}}}{\ge} 1-\left(
        1-\tail\,\frac{2^{2(\ell+1)\ss}}{n^2}
    \right)^{\bigl|\bigl(\Cyl_{\rr/\JJ^2}(R^{(\ell)}_{i,j})\cap\Z^2\bigr)\,\setminus\, S_\ell\bigr|}\nonumber\\
    &\overset{\eqref{690}}{\ge}
    1-\left(
        1-\tail\, \frac{2^{2(\ell+1)\ss}}{n^2}
    \right)^{c'\rr\frac{n^2}{2^{2(\ell+1)\ss}} \,-\, |S_\ell|}.
\end{align}
As indicated above, on the high probability event that $|S_\ell|\le n^{0.5}$, the $-|S_\ell|$ term in the exponent in \eqref{691} can be absorbed into the constant $c'$ (because $\rr\frac{n^2}{2^{2(\ell+1)\ss}} \ge (\log\log n)^{1/2}n^{1.8} \gg n^{0.5}$).
Thus, when $|S_\ell|\le n^{0.5}$, we can lower bound \eqref{691} by
\begin{align}\label{692}
    1 - e^{-c\rr}
    = 1-e^{-c(\log\log n)^{1/2}}.
\end{align}

Next, writing $N \coloneqq \lfloor\frac12 m_i^{(\ell)}\rfloor-1$, we have by definition that
\begin{align}\label{694}
    \bigl|\VV^{(\ell+1)}_i\bigr|
    = \sum_{j=1}^{N}
    \1_{\left\{\Cyl_{\rr/\JJ^2}(R^{(\ell)}_{i,2j+1})  
    \textup{ contains a vertex of scale }
    (\ell+1)\ss\right\}}\,.
\end{align}
By the above estimates 
\eqref{691}, \eqref{692},
it follows that 
\begin{align}\label{693}
    \P\left(\bigl|\VV^{(\ell+1)}_i\bigr| \ge 
    \tfrac{1}{2}N\right)
    &= \E\left[\P\left(\bigl|\VV^{(\ell+1)}_i\bigr| \ge \tfrac{1}{2}N\;\Big|\;S_\ell, N\right)\right]\nonumber\\
    &\ge\E\left[
        \P\Bigl(
            \Bin\bigl(
                N,\;\;
                1-e^{-c(\log\log n)^{1/2}}
            \bigr)
            \ge 
            \tfrac12 N
            \;\Big|\;N
        \Bigr)
        \right]
        - \P\bigl(|S_\ell|> n^{0.5}\bigr),
\end{align}
where we used the fact that the summed indicator functions in \eqref{694} are conditionally independent given $S_\ell$.
Applying the simple estimate
$\P\bigl(\Bin(k, 1-p)\ge \frac{1}{2}k\bigr)\ge 1 - 2^{k}p^{k/2}$ conditionally,
we get that the first term on the RHS of \eqref{693} is lower bounded by
\begin{align*}
    \E\left[
        1-2^N e^{-\frac{c}{2}N(\log\log n)^{1/2}}
    \right]
    &\overset{\hphantom{\eqref{256}}}{\ge} \E\left[
        1-e^{-\frac{c}{4}N(\log\log n)^{1/2}}
    \right]\\
    &\overset{\eqref{256}}{\ge} 1-e^{-\frac{c}{4}(\log n)^{99}(\log\log n)^{1/2}},
\end{align*}
where the first inequality is valid for all sufficiently large $n$.
Finally, using the estimate for $\P\bigl(|S_\ell|> n^{0.5}\bigr)$ from Lemma \ref{lem:most-not-large}, we obtain
\begin{align*}
    \P\left(
        \bigl|\VV^{(\ell+1)}_i\bigr| \ge \tfrac12 N
    \right)
    &\ge 1-e^{-\frac{c}{4} (\log n)^{99}(\log\log n)^{1/2}}
    -e^{-c' n^{0.5}\log n}\\
    &\ge  1-e^{-(\log n)^{99}}.
\end{align*}

Next, to avoid cluttering subsequent notation with additive terms, we will use the crude bound $N=\floor{\frac12 m_i^{(\ell)}}-1 \ge \frac14 m_i^{(\ell)}$, which is valid for all sufficiently large $n$, all $\ell\in\lb 0, \MM-1 \rb$, and all $i\in\lb 1, |\RR^{(\ell)}| \rb$ (e.g. by \eqref{256}).
Then a union bound over $\ell,i$ shows that the probability in the statement of Proposition \ref{prop:largefrac} is lower bounded by
\begin{align*}
    1-(n+1)^4(\log n)\, e^{-(\log n)^{99}} \ge 1-e^{-(\log n)^{97}},
\end{align*}
where we upper bounded the cardinality of $\RR^{(\ell)}$ by the square of the cardinality of $\dmn$, and used that $\MM\le \log n$.
\end{proof}

So far we have not exploited cancellations between the slopes of the
rectangles $R^{(\ell)}\in\RR^{(\ell)}_i$, which was indicated to be crucial in
Section \ref{iop} (see \eqref{thin1}).
This is indeed required to prove Proposition \ref{prop:V}\ref{V3}.
The following lemma will serve as the key input.

\begin{lemma}[Slope cancellations]\label{lem:cancellations}
    The following is true for all sufficiently small $\delta>0$.

    Fix real numbers $x,y>0$.
    Fix also an integer $m\ge1$ and real numbers $x_1,\dots,x_m,y_1,\dots,y_m>0$ satisfying
    \[
        x=\sum_{j=1}^m x_j,\qquad y=\sum_{j=1}^m y_j.
    \]
    Suppose that for all $j\in\lb 1,m\rb$,
    \begin{align*}
        \frac{1}{1+\delta}\cdot \frac{y}{x}\le  \frac{y_j}{x_j} \le (1+\delta)\frac{y}{x}.
    \end{align*}
    Then we have
    \begin{align*}
        \sum_{j=1}^m\sqrt{x_j y_j}\ge  (1- \delta^2) \sqrt{xy}.
    \end{align*}
\end{lemma}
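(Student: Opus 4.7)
The idea is to reduce to a one-parameter problem and then apply a second-order Taylor estimate for $\sqrt{1+t}$, with the crucial gain coming from the fact that the linear term vanishes by a conservation identity.

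\textbf{Step 1: Rescale.} Replacing $x_j$ by $x_j/x$ and $y_j$ by $y_j/y$ shows that it suffices to prove the statement in the normalized case $x=y=1$. Indeed, under this substitution all hypotheses are preserved (the ratio bound is homogeneous after noting that $y_j/x_j$ gets replaced by $(y/x)\cdot(y_j'/x_j')$ while the reference slope $y/x$ becomes $1$), and the desired inequality becomes $\sum_j \sqrt{x_j y_j}\ge 1-\delta^2$.

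\textbf{Step 2: Reparametrize via the local slope.} Set $\lambda_j\coloneqq y_j/x_j$, so the hypothesis reads $\lambda_j\in[1/(1+\delta),\,1+\delta]$ and $\sqrt{x_jy_j}=x_j\sqrt{\lambda_j}$. Writing $t_j\coloneqq\lambda_j-1$, the constraints $\sum x_j=\sum y_j=1$ become
\begin{equation*}
    \sum_{j=1}^m x_j=1,\qquad \sum_{j=1}^m x_j t_j=0,
\end{equation*}
and for all sufficiently small $\delta>0$ we have $|t_j|\le\delta$ (since $\delta/(1+\delta)<\delta$). The target inequality is now
\begin{equation*}
    \sum_{j=1}^m x_j\sqrt{1+t_j}\ge 1-\delta^2.
\end{equation*}

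\textbf{Step 3: A quantitative Taylor lower bound.} I claim that for all $t$ with $|t|\le 1/2$,
\begin{equation*}
    \sqrt{1+t}\ge 1+\tfrac{t}{2}-t^2.
\end{equation*}
To see this, let $f(t)\coloneqq\sqrt{1+t}-1-t/2+t^2$. A direct computation gives $f(0)=f'(0)=0$ and $f''(t)=2-\tfrac{1}{4}(1+t)^{-3/2}\ge 2-\tfrac{\sqrt{2}}{2}>0$ on $[-1/2,1/2]$, so $f\ge 0$ there by convexity together with the vanishing of $f$ and $f'$ at $0$.

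\textbf{Step 4: Assemble.} Applying the Step 3 estimate to each $t_j$ (which is permitted once $\delta\le 1/2$) and using the conservation identity $\sum x_j t_j=0$ from Step 2,
\begin{equation*}
    \sum_{j=1}^m x_j\sqrt{1+t_j}
    \ge \sum_{j=1}^m x_j\Bigl(1+\tfrac{t_j}{2}-t_j^2\Bigr)
    = 1+\tfrac{1}{2}\sum_{j=1}^m x_j t_j-\sum_{j=1}^m x_j t_j^2
    = 1-\sum_{j=1}^m x_j t_j^2
    \ge 1-\delta^2,
\end{equation*}
where the last inequality uses $|t_j|\le\delta$ and $\sum x_j=1$. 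This completes the proof.

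\textbf{Anticipated obstacle.} There is no real obstacle: the entire content of the lemma is the cancellation of the first-order term via $\sum x_jt_j=0$, so only a clean second-order estimate for $\sqrt{1+t}$ is needed. The only thing to verify with a little care is that the naive bound $\sqrt{1+t}\ge 1+t/2-t^2$ (as opposed to the weaker $\sqrt{1+t}\ge 1+t/2-t^2/2+\ldots$ one might write from the series) holds uniformly on the relevant range, which is handled by the convexity argument in Step 3.
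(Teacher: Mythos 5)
Your proof is correct and follows essentially the same approach as the paper: rescale to $x=y=1$ and apply a second-order Taylor estimate for the square root, with the gain coming from the cancellation of the linear term (since $\sum_j x_j=\sum_j y_j$). The only cosmetic difference is that the paper packages the estimate as a pointwise AM--GM-type claim, $\sqrt{zw}\ge(1-\delta^2)\tfrac{z+w}{2}$ for $z/w$ close to $1$, and then sums over $j$, whereas you make the conservation identity $\sum_j x_j t_j=0$ explicit and replace the paper's $O(\delta^3)$ Taylor bookkeeping with the clean uniform bound $\sqrt{1+t}\ge 1+t/2-t^2$ on $|t|\le 1/2$.
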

\begin{proof}
    By writing
    \begin{align*}
        x_j = \frac{x_j}{x} x,
        \qquad y_j = \frac{y_j}{y} y,
    \end{align*}
    it suffices to treat the case $x=y=1$.
    The lemma is then an immediate consequence of the following claim.
    \begin{claim}
        For all sufficiently small $\delta>0$, if $z,w>0$ satisfy
        \begin{align*}
            \frac{1}{1+\delta} \le \frac{z}{w}\le 1+\delta,
        \end{align*}
        then
        \begin{align}\label{34198}
            \sqrt{zw}\ge (1-\delta^2)\frac{z+w}{2}.
        \end{align}
    \end{claim}
    We now prove the claim.
    Since the inequalities in the claim are invariant under the rescaling $(z,w)\mapsto(tz,tw)$ for any $t>0$, we can assume  that $w=1$ without loss of generality.
    Then, a Taylor expansion on the left-hand side of \eqref{34198} yields
    \begin{align*}
        \sqrt{z} &= 1 + \tfrac{1}{2}(z-1) - \tfrac{1}{8}(z-1)^2 + O(\d^3)\\
        &\ge 1+\tfrac{1}{2}(z-1) - \tfrac{1}{8}\d^2 + O(\d^3),
    \end{align*}
    where the inequality is obtained by minimizing $-\smash{\frac{1}{8}}(z-1)^2$ over $z\in[\frac{1}{1+\d}, 1+\d]$.
    On the other hand, we can rewrite the right-hand side of \eqref{34198} as 
    \[
        (1-\d^2)\frac{z+1}{2}
        = 1+\tfrac{1}{2}(z-1) - \d^2 - O(\d^3).
    \]
    The claim follows by comparing the last two displays and noting that $-\smash{\frac{1}{8}}\d^2 > -\d^2+O(\d^3)$ for all small $\d>0$.
\end{proof}

This allows us to finish the proof of Proposition \ref{prop:V}\ref{V3}.
\begin{proof}[Proof of Proposition \ref{prop:V}\ref{V3}]
    By definition of $\VV^{(\ell+1)}$ (see \eqref{eq:def_V}), we have that
    \[
        \bigl|\VV^{(\ell+1)}\setminus\VV^{(\ell)}\bigr| = \sum_{i=1}^{|\RR^{(\ell)}|}
        \bigl|\VV^{(\ell+1)}_i\bigr|.
    \]
    It follows by Proposition \ref{prop:largefrac} that
    \[
        \P\left(
            \bigl|\VV^{(\ell+1)}\setminus\VV^{(\ell)}\bigr|
            \ge \frac18\sum_{i=1}^{|\RR^{(\ell)}|}m_i^{(\ell)}
        \quad \text{for all} \ \ell\in\lb 0, \MM-1 \rb    
        \right)
        \ge 1-e^{-(\log n)^{97}}.
    \]
    Therefore, to prove Proposition \ref{prop:V}\ref{V3}, it suffices to establish the existence of a constant $c>0$ such that for all sufficiently large $n$, it is deterministically true that
    \begin{align*}
        \sum_{i=1}^{|\RR^{(\ell)}|}m_i^{(\ell)}
        \ge c\,\frac{2^{(\ell+1)\ss}}{\JJ}\qquad
        \text{for all } \ell\in\lb 0, \MM-1 \rb.
    \end{align*}

    To this end, notice that \eqref{3095} implies the deterministic lower bound
    \[
        \sum_{i=1}^{|\RR^{(\ell)}|}m_i^{(\ell)}
        \ge 
        \frac{2^{(\ell+1)\ss}}{2\JJ n}
        \cdot
        \sum_{i=1}^{|\RR^{(\ell)}|}
        \sqrt{\Bigl|R^{(\ell)}_i\Bigr|},
    \] 
    where we used that $\floor{x}\ge x/2$ for all $x\ge1$ (note that $m^{(\ell)}_i\gg 1$ by \eqref{256}).
    We lower bound the sum on the right-hand side:
    \begin{align*}
        \sum_{i=1}^{|\RR^{(\ell)}|}
        \sqrt{\Bigl|R^{(\ell)}_i\Bigr|}
        &\;\overset{\eqref{eq:Ri},\, \eqref{eq:RR-decomp}}{=}\;
        \sum_{i=1}^{|\RR^{(\ell-1)}|}
        \sum_{R^{(\ell)}_j\in\RR^{(\ell)}_i}\sqrt{\Bigl|R^{(\ell)}_j\Bigr|}\\
        &\;\overset{\hphantom{\eqref{eq:Ri}, \eqref{eq:RR-decomp}}}{\ge}\;
        \sum_{i=1}^{|\RR^{(\ell-1)}|}
        \left(1-4\frac{\rr^2}{\JJ^4}\right)\sqrt{\Bigl|R^{(\ell-1)}_i\Bigr|},
    \end{align*}
    where the inequality is obtained by applying Lemma \ref{lem:cancellations} with $\d=2\frac{\rr}{\JJ^2}$, with $x,y$ given respectively by the horizontal side length and vertical side length of $R^{(\ell-1)}_i$, and with $x_j,y_j$ given respectively by the horizontal and vertical side length of $R^{(\ell)}_j\in\RR^{(\ell)}_i$.
    To see that this application of Lemma \ref{lem:cancellations} is justified, observe that
    $\sum_j x_j = x$ and $\sum_j y_j=y$ by the identity \eqref{eq:Ri2} and the discussion preceding it,
    and observe that we can choose $\smash{\d=2\frac{\rr}{\JJ^2}}$ due to the slope bounds of Lemma \ref{lem:apriori}\ref{apriori_slope} and the fact that $\smash{2\frac{\rr}{\JJ^2}}=o(1)$ 
    (recall  \eqref{eq:s}, \eqref{eq:rho}).

    Now, combining the last two displays and repeatedly applying Lemma \ref{lem:apriori}\ref{apriori_slope} and Lemma \ref{lem:cancellations}, we get
    \begin{align*}
        \sum_{i=1}^{|\RR^{(\ell)}|}
        m_i^{(\ell)}
        &\overset{\hphantom{\eqref{eq:base}}}{\ge}\; 
        \frac{2^{(\ell+1)\ss}}{2\JJ n}
        \left(1-4\frac{\rr^2}{\JJ^4}\right)^{\ell} 
        \sqrt{\Bigl|R_1^{(0)}\Bigr|}\\
        &\overset{\eqref{eq:base}}{=}\;
        \frac{2^{(\ell+1)\ss}}{2\JJ}
        \left(1-4\frac{\rr^2}{\JJ^4}\right)^{\ell}.
    \end{align*}
    Finally, using that $\JJ=(\log n)^{1/4}$, that $\rr=(\log\log n)^{1/2}$, and that $\ell\le\MM\le C\frac{\log n}{\log\log n}$,
    we obtain
    \begin{equation}\label{160983}
        \sum_{i=1}^{|\RR^{(\ell)}|}
        m_i^{(\ell)}
        \ge 
        \frac{2^{(\ell+1)\ss}}{2\JJ}
        \left(
            1-4\frac{\log \log n}{\log n}
        \right)^{C\frac{\log n}{\log\log n}}
        \ge c\,
        \frac{2^{(\ell+1)\ss}}{\JJ}
    \end{equation}
    for all sufficiently large $n$.
    This completes the proof of Proposition \ref{prop:V}\ref{V3}.
\end{proof}

In the next subsection we show how a straightforward adaption of the proof of Theorem \ref{thm:main} yields Theorem \ref{thm:secondmoment}.
\subsection{A distribution with finite second moment and superlinear last passage time}\label{sec:secondmoment}
As discussed just above (see also Section \ref{iop}), the key insight underlying the proof
of Theorem \ref{thm:main} is that accounting for the cancellations between the slopes of
rectangles at a given scale allows us to repeat the inductive construction underlying
Proposition \ref{prop:V} $\log n$ many times before losing control over the slopes, whereas
naively applying the worst-case slope bound at every scale would lead to a loss
of control after only $(\log n)^{1/2}$ many repetitions.
In what follows we will show that this improvement over the naive 
$(\log n)^{1/2}$ also allows us to construct a weight distribution with
finite second moment and superlinear last passage time, as asserted in Theorem \ref{thm:secondmoment}.

To make the distribution have a finite second moment we add a logarithmic factor in the tail.
Thus, we fix a non-negative random variable $X$ for which there exist constants $\beta \in (1,\,\frac32)$ and $C>0$ such that
as $t\to\infty$,
\[
    \P(X>t)\sim Ct^{-2}(\log t)^{-\beta}.
\]
Note that $\E[X^2] < \infty$, since
\begin{align*}
    \E[X^2]
    = \int_0^\infty 2t\, \P(X>t)\,\mrm{d}t 
    \ls
    \int_{2}^\infty \frac{1}{t(\log t)^{\beta}}\,\mrm{d}t 
    <\infty.
\end{align*}
Theorem \ref{thm:secondmoment} is therefore an immediate corollary of the following result.
\begin{proposition}\label{prop:real-secondmoment}
Consider LPP where the weights are i.i.d. copies of the above random variable $X$. 
Then there exists $c>0$ such that for all sufficiently large $n$, 
    \begin{align*}
        \P\left(L_n \ge c\,\frac{n(\log n)^{\frac{3}{4} - \frac{\beta}{2}}}{\log\log n}\right)
        \ge 1-e^{-(\log n)^{97}}.
    \end{align*}
\end{proposition}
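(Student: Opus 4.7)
The strategy is to re-run the multi-scale construction of \cref{sec:multiscale} and the proof of \cref{prop:V} verbatim, only adjusting the two tunable parameters $\rr$ and $\JJ$ (while $\ss$ and $\MM$ remain as in \eqref{eq:s}) to compensate for the extra $(\log t)^{-\beta}$ factor in the tail. Since $\log(n/2^{k})\asymp \log n$ uniformly for $k\in\lb 0,\MM\ss\rb$, the analog of \eqref{eq:tail} becomes
\[
    \P(v \text{ is of scale } k) \ge \tail\,\frac{2^{2k}}{n^{2}\,(\log n)^{\beta}},
\]
with a modified constant $\tail>0$; \cref{lem:most-not-large} carries over unchanged, since the extra logarithm only strengthens the tail bound.

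I would set
\[
    \rr := (\log n)^{\beta}(\log\log n)^{1/2},
    \qquad
    \JJ := (\log n)^{(1+2\beta)/4}.
\]
The crucial observation is that the cylinder-width parameter $\rr/\JJ^{2} = (\log\log n/\log n)^{1/2}$ is identical in order to its value in the original proof. Consequently \cref{lem:apriori} (in particular the iterated slope bound \eqref{eq:iteratedslope}) and \cref{lem:cancellations} carry over verbatim, and the cumulative cancellation factor $(1-4\rr^{2}/\JJ^{4})^{\MM}$ remains bounded below by an absolute positive constant. Likewise the side-length estimate \eqref{689} still delivers $n^{0.8}$, so \cref{lem:lattice_count} continues to yield \eqref{690} with the new $\rr$.

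The only non-trivial change appears in the analog of \eqref{691}: multiplying the cylinder volume $\gs \rr\,n^{2}/2^{2(\ell+1)\ss}$ by the reduced per-vertex probability $\tail\cdot 2^{2(\ell+1)\ss}/(n^{2}(\log n)^{\beta})$ produces the lower bound $1-\exp(-c\rr/(\log n)^{\beta})=1-\exp(-c(\log\log n)^{1/2})$ for a given cylinder to contain a vertex of scale $(\ell+1)\ss$, which matches \eqref{692} exactly. Hence the binomial concentration step and the union bound in the proof of \cref{prop:largefrac} are unaffected, and the final computation \eqref{160983} gives
\[
    |\VV^{(\ell)}\setminus\VV^{(\ell-1)}|\ge c\,\frac{2^{\ell\ss}}{\JJ}\qquad\text{for all }\ell\in\lb 1,\MM\rb,
\]
with probability at least $1-e^{-(\log n)^{97}}$. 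Summing the per-scale weight contributions (each of order $n/\JJ$) over the $\MM$ scales yields
\[
    L_n \ge c\,\MM\cdot\frac{n}{\JJ}
    \gs \frac{n\log n}{\log\log n \cdot (\log n)^{(1+2\beta)/4}}
    = \frac{n(\log n)^{3/4-\beta/2}}{\log\log n},
\]
as claimed.

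The main obstacle to watch for is the parameter balance: I need $\rr\gg(\log n)^{\beta}$ so each cylinder is populous enough, while simultaneously $\rr/\JJ^{2}=o(1)$ and $\MM\cdot(\rr/\JJ^{2})^{2}=O(1)$ so the slope control and the cancellation bound both survive. The choice $\JJ=(\log n)^{(1+2\beta)/4}$ is essentially forced by these competing requirements, and the hypothesis $\beta<\tfrac{3}{2}$ is precisely what keeps $3/4-\beta/2$ strictly positive, so the lower bound is genuinely superlinear. \cref{thm:secondmoment} then follows from \cref{prop:real-secondmoment} by Borel--Cantelli, since the failure probabilities $e^{-(\log n)^{97}}$ are summable in $n$.
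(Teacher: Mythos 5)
Your proposal is correct, and it reaches the same bound by a genuinely different parameter choice than the paper's. The paper keeps $\rr=(\log\log n)^{1/2}$ and $\JJ=(\log n)^{1/4}$ unchanged but redefines ``scale $k$'' to mean $\frac{n}{2^k(\log(n/2^k))^{\beta/2}} < X(v) \le \frac{n}{2^{k-1}(\log(n/2^k))^{\beta/2}}$, so that $\P(v\text{ is of scale }k)\gs 2^{2k}/n^2$ holds without any logarithmic loss and \cref{prop:V} carries over verbatim; the logarithm then reappears in the final step because each selected vertex has weight only $\gs \frac{n}{2^{\ell\ss}(\log n)^{\beta/2}}$. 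You instead keep the original scale thresholds $\frac{n}{2^k}<X(v)\le\frac{n}{2^{k-1}}$, so each selected vertex retains weight $\ge\frac{n}{2^{\ell\ss}}$, and compensate for the $(\log n)^{-\beta}$ loss in the per-vertex probability by dilating the cylinders ($\rr\leftarrow(\log n)^\beta(\log\log n)^{1/2}$) while widening them relatively ($\JJ\leftarrow(\log n)^{(1+2\beta)/4}$) so that $\rr/\JJ^2$ and $\rr^2/\JJ^4$ are unchanged, which preserves the slope control of \cref{lem:apriori} and the cancellation budget of \cref{lem:cancellations}. Both routes use $\beta<\tfrac32$ crucially: for you it keeps $2^\ss/\JJ\ge(\log n)^{99}$ so \eqref{256} survives, while for the paper it makes the final exponent $3/4-\beta/2$ positive. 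Your version has the minor pedagogical advantage that the ``scale'' definition is unchanged, at the cost of having to re-verify that the modified $\rr,\JJ$ still satisfy all the constraints scattered through \cref{sec:superlinear1}; the paper's version localizes the change to a single definition and leaves the machinery untouched. Both are sound, and the two choices are morally equivalent since the Flory-type heuristic only fixes the product structure $\rr/\JJ^2$ and the per-scale budget $m/\JJ$, not each parameter individually.
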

\begin{proof}
The arguments are the same as in the proof of Theorem \ref{thm:main}, so we will be brief to avoid repetition. 
As in \eqref{eq:s}, we write
\[
    \ss \coloneqq 100\log \log n
    \qquad \MM \coloneqq \left\lfloor\frac{\log  n}{10\ss}\right\rfloor,
    \qquad\JJ \coloneqq (\log n)^{1/4}.
\]
For $k\ge0$, we say that a vertex $v$ is of \emph{scale $k$} if 
\begin{align*}
    \frac{n}{2^k(\log\frac{n}{2^k})^{\beta/2}}
    < X(v) \le
    \frac{n}{2^{k-1}(\log\frac{n}{2^k})^{\beta/2}}.
\end{align*}
A straightforward calculation reveals that there exists $c>0$ such that for all sufficiently large $n$ and all $k\le \frac{1}{10}\log n$,
\begin{align*}  
    \P(v \ \text{is of scale}\ k)  \ge c\,\frac{2^{2k}}{n^2}.
\end{align*}
One can check that the proof of Proposition \ref{prop:V} (multi-scale construction) simply relied on the tail estimate \eqref{eq:tail}.
Since that estimate continues to hold here the conclusions of Proposition \ref{prop:V} remain valid in the current setting as well.
We reuse the notation $\VV^{(0)}\subset\VV^{(1)}\subset\cdots\subset\VV^{(\MM)}$ for the random vertex sets thus constructed.

We can now argue as in the proof of Theorem \ref{thm:main} (see below Proposition \ref{prop:V}).
By Proposition \ref{prop:V} and the non-negativity of the weights, for all sufficiently large $n$ the following estimates hold with probability at least $\smash{1-e^{-(\log n)^{97}}}$:
    \begin{align*}
        L_n
        \ge \sum_{v\in\VV^{(\MM)}} X(v)
        &\ge \sum_{\ell=1}^\MM \sum_{v\in\VV^{(\ell)}\setminus\VV^{(\ell-1)}} X(v)\\
        &\gs \sum_{\ell=1}^\MM \frac{2^{\ell\ss}}{\JJ} \cdot \frac{n}{2^{\ell\ss}(\log \frac{n}{2^{\ell\ss}})^{\beta/2}}\\
        &\ge \MM\cdot \frac{n}{\JJ(\log n)^{\beta/2}}\\
        &\gs \frac{n(\log n)^{\frac{3}{4} - \frac{\beta}{2}}}{\log\log n}.
    \end{align*}
This finishes the proof.
\end{proof}
\noindent Observe that, as alluded to earlier, the exponent $3/4$ in Theorem \ref{thm:main} being strictly greater than $1/2$ was crucial in the construction of the above example: otherwise, we could not have deduced superlinear growth for any $\beta>1$.\\

Next we prove our results on concentration and fluctuations of the $\al=2$ heavy-tailed last passage time, namely Theorem \ref{thm:conc} and a fluctuation lower bound in Proposition \ref{prop:fluc-lower-bound} as indicated in Remark \ref{lower1234}.

\section{Concentration and fluctuations}\label{sec:conc}
We prove Theorem \ref{thm:conc} using the Efron--Stein inequality (e.g. \cite[Theorem 3.1]{BLMConcentrationInequalitiesNonasymptotic2013}):
\begin{proposition}[Efron--Stein inequality]\label{efron-stein}
    Let $Z=f(W_1,\dots,W_q)$ be a square-integrable function of independent random variables $W_1,\dots,W_q$.
    For $i\in\lb 1,q\rb$, let
    \[
        Z'_i \coloneqq f(W_1,\dots,W_{i-1},W_i',W_{i+1},\dots,W_q),
    \]
    where $W_i'$ is an independent copy of $W_i$. 
    Then
    \[
        \Var(Z) \le \sum_{i=1}^q \E\Bigl[\max\bigl\{Z-Z'_i,\, 0\bigr\}^2\Bigr].
    \]
\end{proposition}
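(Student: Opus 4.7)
The plan is to prove this classical inequality via the standard two-step route: tensorization of variance, followed by a symmetrization identity that converts the conditional variance into the one-sided second moment appearing on the right-hand side.

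For the first step, I would introduce the Doob martingale $Z_i \coloneqq \E[Z \mid W_1,\dots,W_i]$, so that $Z - \E[Z] = \sum_{i=1}^q (Z_i - Z_{i-1})$ decomposes into orthogonal martingale increments, giving $\Var(Z) = \sum_{i=1}^q \E[(Z_i - Z_{i-1})^2]$. Using independence of the $W_j$, one may rewrite $Z_{i-1}$ as the expectation of $Z_i$ taken solely over $W_i$, and then conditional Jensen bounds $\E[(Z_i - Z_{i-1})^2]$ by $\E[\Var_i(Z)]$, where $\Var_i(Z)$ is the conditional variance of $Z$ with respect to $W_i$ with all other coordinates frozen. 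This yields the tensorization bound
\[
    \Var(Z) \le \sum_{i=1}^q \E\bigl[\Var_i(Z)\bigr].
\]

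For the second step, I would use the elementary identity $\Var(X) = \tfrac{1}{2}\E[(X - X')^2]$ valid for any independent copy $X'$ of $X$. Applied conditionally on $(W_j)_{j \ne i}$ using the independent copy $W_i'$, this gives $\Var_i(Z) = \tfrac{1}{2}\E\bigl[(Z - Z'_i)^2 \mid (W_j)_{j \ne i}\bigr]$, and hence $\Var(Z) \le \tfrac{1}{2}\sum_{i=1}^q \E[(Z - Z'_i)^2]$. To recover the stated one-sided form, I would invoke exchangeability: since $(W_i, W_i')$ is exchangeable given the remaining coordinates, the joint law of $(Z, Z'_i)$ is symmetric, which gives $\E[(Z - Z'_i)^2 \1_{Z \ge Z'_i}] = \E[(Z - Z'_i)^2 \1_{Z \le Z'_i}]$, and therefore $\E[(Z - Z'_i)^2] = 2\E[\max\{Z - Z'_i, 0\}^2]$. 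Substituting cancels the factor of $\tfrac{1}{2}$ and produces the claimed inequality.

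The only nontrivial input in this plan is the tensorization bound; everything else is bookkeeping with independence and symmetry. The full argument is recorded as Theorem~3.1 of \cite{BLMConcentrationInequalitiesNonasymptotic2013}, and the statement is used in the paper as a black box to deduce \cref{thm:conc,thm:conc-brw}.
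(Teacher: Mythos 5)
Your proof is correct, and there is nothing in the paper to compare it against: the authors state this proposition as a known black-box result and cite Boucheron--Lugosi--Massart (Theorem~3.1) without reproducing a proof. The tensorization-via-Doob-martingale step followed by the conditional symmetrization identity and the exchangeability reduction to the one-sided form is precisely the standard argument from that reference, and all the steps you outline (orthogonality of martingale increments, conditional Jensen using independence to commute the relevant conditional expectations, the identity $\Var(X)=\tfrac12\E[(X-X')^2]$, and the symmetry of the law of $(Z,Z_i')$) are sound.
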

We briefly sketch the proof of the Efron--Stein inequality.
For $i\in\lb 1,q\rb$ let $\cF_i\coloneqq \sigma(W_1,\dots,W_i)$, and let $\cF_0$ be the trivial $\sigma$-algebra.
Denote by $\E_i$ the conditional expectation given $\cF_i$.
By $L^2$-orthogonality of martingale increments,
\begin{align*}
    \Var(Z) = \sum_{i=1}^q \E\Bigl[
        \Bigl(
            \E_i[Z]
            -\E_{i-1}[Z]
        \Bigr)^2
    \Bigr].
\end{align*}
Writing $\E^{(i)}$ for the conditional expectation given $\sigma(W_j:j\ne i)$, we have by independence
\begin{align*}
    \E_{i-1}[Z]
    = \E_i\bigl[
        \E^{(i)}[Z]
        \bigr].
\end{align*}
Applying Cauchy--Schwarz to $\E_i$, we get
\begin{align*}
    \E\Bigl[
        \Bigl(
            \E_i[Z]
            -\E_{i-1}[Z]
        \Bigr)^2
    \Bigr]
    \le \E\Bigl[
        \Bigl(Z-\E^{(i)}[Z]\Bigr)^2
    \Bigr]
    = \E\Bigl[\Var^{(i)}(Z)\Bigr]
    =
    \E\Bigl[\max\bigl\{Z-Z'_i,\, 0\bigr\}^2\Bigr],
\end{align*}
from which the Efron--Stein inequality follows.

Note that the above argument applies verbatim in the more general setting where each $W_i$ is a random element of some measurable space $\Omega_i$, and $W_1,\dots,W_q$  are independent.
This observation will be used in the proof of Proposition \ref{prop:fluc-lower-bound} below, where we will control the variance of a Poissonian hierarchical model by resampling Poisson point processes restricted to small boxes.

We proceed now to the proof of Theorem \ref{thm:conc}.

\begin{proof}[Proof of Theorem \ref{thm:conc}]
    Fix $K\ge 1$, to be specified later, and let $\wt{L}_n$ be the last passage time from $(0,0)$ to $(n,n)$ with respect to the truncated weights
    \[
        \wt{X}(v) \coloneqq X(v)\1_{X(v)\le K^2n}.
    \]
    Note that $\P\bigl(X(v)\ne \wt{X}(v)\bigr)\ls \frac{1}{K^4n^2}$.
    We will now show that $\Var(\wt{L}_n) \ls K^2n^2$
    by viewing $\wt{L}_n$ as a function of the weights $\bigl(\wt{X}(v)\bigr)_{v\in\dmn}$ and applying the Efron--Stein inequality.

    For any vertex $v\in\dmn$ we denote by $\wt{L}'_{n,v}$ the last passage time obtained by replacing the weight $\wt{X}(v)$ with an independent copy $\wt{X}'(v)$.
    Suppose that $\wt{L}_n - \wt{L}'_{n,v} \ge 0$.
    Then, writing $\Gamma$ for a geodesic with respect to $\wt{L}_n$
    (Definition \ref{def:lpp}), we have that
    \begin{align}\label{resample}
        0\le \wt{L}_n-\wt{L}_{n,v}'
        &\le \sum_{w\in\Gamma}\wt{X}(w)
        - \left(\sum_{w\in\Gamma\setminus\{v\}}\wt{X}(w) + \wt{X}'(v)\1_{v\in\Gamma}\right)
        \le \wt{X}(v) \1_{v\in\Gamma},
    \end{align}
    where for future notational simplicity we used the bound $\wt{X}(v)-\wt{X}'(v)\le \wt{X}(v)$ (since $\wt{X}'(v)$ is non-negative).
    Combining the above estimate with the Efron--Stein inequality, we get
    \begin{align*}
        \Var(\wt{L}_n)
        &\le \sum_{v\in\dmn}\E\Bigl[\max\bigl\{\wt{L}_n-\wt{L}'_{n,v},\,0\bigr\}^2\Bigr]\\
        &\le \sum_{v\in\dmn}\E\Bigl[\wt{X}(v)^2\1_{v\in\Gamma}\Bigr]\\
        &\le\E\left[\max_{\pi}\sum_{v\in\pi}\wt{X}(v)^2\right],
    \end{align*}
    where the maximum in the last line is over all up-right directed lattice paths from $(0,0)$ to $(n,n)$.
    The last line is the expected last passage time with respect to the squared weights $\wt{X}(v)^2$, which we upper bound next.
    A general upper bound was obtained in \cite{MarLinearGrowthGreedy2002}
    relying the fact that for LPP with i.i.d. $\Ber(p)$ weights,
    the expected last passage time from $(0,0)$ to $(n,n)$ is $O(np^{1/2})$.
    Using this and decomposing the weights into scales (e.g. dyadically), one gets
    \begin{align*}
        \E\left[\max_{\pi}\sum_{v\in\pi}\wt{X}(v)^2\right]
        &\ls 
        n \int_0^\infty \P\Bigl(\wt{X}(v)^2 > t\Bigr)^{1/2}\,\mrm{d}t
    \end{align*}
    (for details, see the proof of \cite[Theorem 2.3]{MarLinearGrowthGreedy2002}).
  It is straightforward to see that 
    \begin{align*}
        \int_0^\infty \P\Bigl(\wt{X}(v)^2 > t\Bigr)^{1/2}\,\mrm{d}t
        &=\int_0^{(K^2n)^2} \P\Bigl(X(v)>t^{1/2}\Bigr)^{1/2}\,\mrm{d}t\\
        &\ls \int_0^{(K^2n)^2}\frac{1}{t^{1/2}}\,\mrm{d}t\\
        &=K^2n.
    \end{align*}
    Combining the last three displays shows that 
    \begin{equation}\label{varbound}
    \Var(\wt{L}_n) \ls K^2n^2
    \end{equation}
    as claimed.
    Then, by Chebyshev's inequality, there exists $C>0$ such that for all $t>0$ and all $K,n\ge1$,
    \begin{align*}
        \P\left(\left|\frac{\wt{L}_n-\E[\wt{L}_n]}{Kn}\right| > t\right)
        \le \frac{C}{t^2}.
    \end{align*}

It remains to compare  $\smash[b]{\frac{\wt L_n-\E[\wt{L}_n]}{Kn}}$ to  $\smash[b]{\frac{L_n-\E[L_n]}{Kn}}$.
    By a union bound,
    \begin{equation*}
        \begin{split}
            \P\bigl(L_n\ne \wt{L}_n\bigr)
        &\le \P\bigl(
                \text{there exists} \ v\in \dmn \ \text{such that} \ X(v)>K^2n
            \bigr)
        \ls \frac{1}{K^4}.
        \end{split}
    \end{equation*}
    Next we compare the means $\E[L_n]$ and $\E[\wt{L}_n]$:
    \begin{align*}
        0 \le \E[L_n]-\E[\wt{L}_n]
        &\le \E\left[
            \max_{\pi}\sum_{v\in\pi}X(v)\1_{X(v)>K^2n}
        \right]\\
        &\le \sum_{k=\floor{\log (K^2n)}}^\infty 
        \E\left[\max_{\pi}\sum_{v\in\pi}X(v)\1_{X(v)\in[2^k, 2^{k+1}]}\right]\\
        &\le \sum_{k=\floor{\log (K^2n)}}^\infty 2^{k+1}\cdot \E\biggl[\Bigl|\Bigl\{
            v\in\dmn : X(v) \in [2^k, 2^{k+1}]
            \Bigr\}\Bigr|\biggr]\\
        &\le C\sum_{k=\floor{\log (K^2n)}}^\infty 2^{k+1}\cdot \frac{n^2}{2^{2k}}\\
        &\le C \frac{n}{K^2},
    \end{align*}
    where the constant $C$ does not depend on $K$.
    It follows that there exists $t_0\ge 1$ such that for all $t\ge t_0$ and all $n,K\ge 1$,
    \begin{align*}
        \P\left(\left|\frac{L_n-\E[L_n]}{Kn}\right| > t\right)
        &\le 
        \P\left(\left|\frac{\wt{L}_n-\E[\wt{L}_n]}{Kn}\right| > t-\frac{C}{K^3}\right)
        + \P\bigl(L_n\ne \wt{L}_n\bigr)\\
        &\ls \frac{1}{t^2} + \frac{1}{K^4}.
    \end{align*}
    Setting $K=t^{1/2}$ and making the change of variables $t^{3/2}\mapsto t$ completes the proof of Theorem \ref{thm:conc}.
\end{proof}

We end this section with a discussion on fluctuation lower bounds, including the upcoming Proposition \ref{prop:fluc-lower-bound}.
Recall from Remark \ref{lower1234} that we expect the true fluctuation order of $L_n$ to be lower than $n$ by a polylogarithmic correction.
Note that this is consistent with the following simple observation:
\begin{align}\label{eq:lower-via-max}
    \P\bigl(L_n-\E[L_n]\ge n\log n\bigr)
    &\ge \P\left(\max_{v\in\dmn} X(v) \ge Cn\log n\right)
    \gs \frac{1}{(\log n)^2},
\end{align}
where we used that $\E[L_n]=O(n\log n)$ (recall Section \ref{iop}) and  $C>0$ is taken to be sufficiently large.
Note that except for \eqref{eq:lower-via-max}, in our analysis of the $\al=2$ heavy-tailed LPP model we have completely ignored the fluctuation behavior coming from weights taking values $\gg n$, but rather have focused on the underlying hierarchical structure.
Returning our focus to the latter, the upcoming proposition shows that a lower bound 
consistent with the above 
holds even after conditioning on
$\max_{v\in\dmn}X(v)  = O(n)$ (note that this is the typical behavior of the maximum).
It is in this regime that the fluctuation behavior of $L_n$ should approximate that of the hierarchical Poissonian model from Section \ref{iop}, so for simplicity we focus on the latter in the rest of this discussion.
Thus, we consider independent Poisson point processes $\omega^{(0)},\omega^{(1)},\dots$ where $\omega^{(k)}$ is of intensity $\frac{2^{2k}}{n^2}$, and define the last passage time by
\[
    \cL_n\coloneqq\max_\pi \sum_{k=0}^{\log  n}
    \frac{n}{2^k}\bigl|\omega^{(k)}\cap\pi\bigr|,
\]
where the maximum is over continuous increasing paths $\pi$ from $(0,0)$ to $(n,n)$.

\begin{proposition}\label{prop:fluc-lower-bound}
    There exists $C>0$ such that for all sufficiently large $n$,
    \[
        \P\bigl(\cL_n - \E[\cL_n] \ge n\bigr) \ge \frac{1}{(\log n)^C}.
    \]    
\end{proposition}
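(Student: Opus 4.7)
The plan is to exploit the independence of the scale-$0$ Poisson process $\omega^{(0)}$ from the higher-scale processes $\{\omega^{(k)}\}_{k\geq 1}$. Let $\cL_n^\flat$ be the last passage time using only the scales $k\geq 1$; then $\cL_n^\flat$ is independent of $\omega^{(0)}$, $\cL_n\geq\cL_n^\flat$, and $\E[\cL_n]\leq\E[\cL_n^\flat]+n$ (since $\cL_n-\cL_n^\flat\leq n|\omega^{(0)}|$ and $\E[|\omega^{(0)}|]=1$). First I would establish the variance bound $\Var(\cL_n^\flat)=O(n^2)$ by adapting the Efron--Stein argument of \cref{thm:conc} to the Poisson proxy: each weight is bounded by $n$, so a scale-by-scale decomposition reduces the variance computation to a sum of Poisson-LPP contributions, each of order $n^2$. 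Chebyshev then yields $\cL_n^\flat\geq\E[\cL_n^\flat]-Tn$ with probability at least $3/4$ for some constant $T>0$.

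Next, for constants $K\in\N$ and $\e>0$ to be specified, I introduce the event $A$ that $\omega^{(0)}$ contains $K$ points $p_1\prec\cdots\prec p_K$ all lying inside $\Cyl_{\e/\sqrt{\log n}}([0,n]^2)$, the cylinder of width $\e n/\sqrt{\log n}$ around the diagonal. Since $\omega^{(0)}$ has intensity $1/n^2$ and this cylinder has area $\asymp n^2/\sqrt{\log n}$, a direct Poisson computation gives $\P(A)\gs (\log n)^{-K/2}$, and crucially $A$ is independent of $\cL_n^\flat$. On $A$, routing a directed path through $(0,0), p_1, \ldots, p_K, (n,n)$ and filling the gaps with optimal scale-$\geq 1$ sub-paths yields
\[
\cL_n \geq Kn + \sum_{j=0}^{K} \cL^\flat(p_j, p_{j+1}),
\]
where $p_0\coloneqq(0,0)$, $p_{K+1}\coloneqq(n,n)$, and $\cL^\flat(u,v)$ is the LPP over scales $\geq 1$ from $u$ to $v$.

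The heart of the matter is the estimate $\sum_{j=0}^{K}\cL^\flat(p_j, p_{j+1})\geq \cL_n^\flat - O(K^2\e^2 n)$ with high probability. This is achieved by combining a deterministic mean bound from a slope-cancellation identity in the spirit of \cref{lem:cancellations} (the per-scale parabolic loss from forcing the path through the $K$ near-diagonal constraints scales as $K^2\e^2 n/\log n$, so summing over the $\log n$ scales gives $O(K^2\e^2 n)$; the strip width $\e n/\sqrt{\log n}$ is calibrated precisely so this total remains $O(n)$), together with concentration of each $\cL^\flat(p_j, p_{j+1})$ around its mean via Efron--Stein applied in the sub-rectangle $\Rect(p_j, p_{j+1})$. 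Combining on $A\cap\{\cL_n^\flat\geq\E[\cL_n^\flat]-Tn\}$, whose probability is $\gs(\log n)^{-K/2}$ by independence, gives
\[
\cL_n \geq \E[\cL_n] + \bigl(K - T - 1 - O(K^2\e^2)\bigr)\,n.
\]
Choosing $K$ a large enough constant and $\e$ sufficiently small yields $\cL_n\geq\E[\cL_n]+n$ on this event, concluding the proof with $C=K/2$. The main obstacle is the deterministic mean bound: while per-scale parabolic loss is controlled by AM-GM à la \cref{lem:cancellations}, summing carefully across all $\log n$ scales of the Poisson proxy to reach the stated $O(K^2\e^2 n)$ bound requires some bookkeeping, and this is exactly what forces the strip width $\e n/\sqrt{\log n}$ (rather than $\e n$) and thus drives the appearance of the $\polylog$ rate in the conclusion.
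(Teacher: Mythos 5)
Your setup (removing scale $0$, mean and variance comparison via Efron--Stein, then using independence of $\omega^{(0)}$ to create extra weight) starts out parallel to the paper's argument, but the two proofs diverge at the crucial step, and your version has a genuine gap.

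The problem is the claim
\[
\sum_{j=0}^{K}\cL^\flat(p_j, p_{j+1}) \;\geq\; \cL_n^\flat - O(K^2\e^2 n)
\quad\text{with high probability,}
\]
where the $p_j$ are pre-specified near-diagonal points. This is an \emph{upper bound} on the passage-time loss incurred by constraining a directed path through $K$ fixed points. \cref{lem:cancellations} cannot deliver this: it is a deterministic inequality about sums of geometric means, which in the proof of \cref{prop:V}\ref{V3} is used to lower-bound the number of rectangles in a constrained multi-scale construction and hence to lower-bound a \emph{constructed} path's weight. It says nothing about how much the unconstrained geodesic loses when forced through pre-chosen points. For that, you would need to know that the geodesic for $\cL_n^\flat$ already passes close to each $p_j$ (a localization statement) or that $\E[\cL^\flat(u,v)]$ is an approximately concave function of $(u,v)$ with quantified curvature so that the triangle-inequality loss can be bounded --- neither of which is available: the paper establishes only $\E[\cL_n^\flat]=\Omega(n(\log n)^{3/4}/\log\log n)$ and $O(n\log n)$, with the true order and the transversal fluctuation exponent explicitly open (see \cref{lower1234}). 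The per-scale parabolic loss heuristic you invoke is the right intuition, but it only controls the loss \emph{relative to a path already living in a diagonal strip of the same width}, not relative to the unconstrained geodesic, which may deviate by $\Theta(n)$ a priori.

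The paper avoids this entirely by reversing the order of the quantifiers: it lets the random geodesic $\wh\Gamma$ for $\wh{\cL}_n$ choose the rectangle $R_*$ (a rectangle of area $\ge s^2/4$ with corners on $\wh\Gamma$ that meets at most four of the $s\times s$ partition boxes), then conditions on $\omega^{(0)}$ dropping $J$ ordered points into $R_*$, exploiting that $\omega^{(0)}$ is independent of $R_*$. Since $R_*$ lies on the geodesic, the rerouting cost is bounded by $\wh{\cL}_R$ over the (at most four) partition boxes meeting $R_*$, and these are shown to be $O(n)$ uniformly with high probability by a per-box, per-scale union bound --- no localization of the geodesic is needed. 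You would need to either adopt this ``geodesic chooses the target box'' scheme or supply a separate localization/concavity estimate to close the gap in your step 7.
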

The guiding intuition in the proof of the above, as also in the upcoming analysis for the branching random walk model, is that the $k=0$ scale alone contributes an order $n$ amount to the fluctuations, as the points have weight $n$ and thus their inclusion or 
exclusion in a path changes its passage time by $n$. However since we expect the geodesic to not completely delocalize but instead deviate from the diagonal by only an $O\left(\frac{n}{\polylog n}\right)$ amount (recall Remark \ref{lower1234}), the probability that the points of weight $n$ land close enough to the geodesic for the latter to collect them is only $\frac{1}{\polylog n}$, leading to a probability lower bound of that order.

\begin{proof}[Proof of Proposition \ref{prop:fluc-lower-bound}]
Consider the last passage time defined in the same manner as $\cL_n$, but without the $0$\textsuperscript{th} scale:
\[
    \wh{\cL}_n \coloneqq \max_\pi \sum_{k=1}^{\log  n} \frac{n}{2^k}\bigl|\omega^{(k)}\cap\pi\bigr|.
\]
We will show that $\wh{\cL}_n\ge \E[\cL_n]-Cn$ and $\cL_n\ge \wh{\cL}_n + (C+1)n$ simultaneously with probability at least $\frac{1}{\polylog n}$.
First, notice that 
\begin{align*}
    \cL_n \ge \wh{\cL}_n \ge \cL_n - n\cdot |\o^{(0)}\cap[0,n]^2|,
\end{align*}
and thus, since $|\o^{(0)}\cap[0,n]^2|$ is a Poisson random variable of mean $1$,
\begin{align*}
    \E[\cL_n]
    \ge \E[\wh{\cL}_n]
    \ge \E[\cL_n] - n.
\end{align*}
A similar argument as in \eqref{varbound} using the Efron--Stein inequality implies that $\Var(\wh{\cL}_n)\ls n^2$.
Postponing the details of this variance bound to the next paragraph, it follows by Chebyshev's inequality that there exists $C_0>0$ such that
\begin{align}\label{08084308}
    \P\Bigl(\wh{\cL}_n \ge \E[\cL_n] - C_0n\Bigr) \ge \frac12
\end{align}
for all sufficiently large $n$.

We now show that $\Var(\wh{\cL}_n)\ls n^2$.
Partition $[0,n]^2$ into $1\times 1$ boxes.
For each $1\times 1$ box $B$ and each $k\in\lb 1,\log n\rb$, let $\wh{\cL}_{n,k,B}'$ be the last passage time obtained by replacing the restricted point process $\o^{(k)}\cap B$ with an i.i.d. copy.
Let $\wh{\Gamma}$ be a geodesic with respect to $\wh{\cL}_n$.
Arguing as in \eqref{resample}, on the event that $\wh{\cL}_{n} - \wh{\cL}_{n,k,B}'\ge 0$, we have the bounds
\begin{align*}
    0 \le \wh{\cL}_{n}-\wh{\cL}_{n,k,B}'
    \le \frac{n}{2^k}\left|\o^{(k)} \cap B\cap \wh{\Gamma}\right|.
\end{align*}
It follows by the Efron--Stein inequality (see the discussion below Proposition \ref{efron-stein}) that
\begin{align*}
    \Var(\wh{\cL}_{n})
    &\le 
    \sum_{k=1}^{\log n}
    \frac{n^2}{2^{2k}}
    \sum_{B} 
    \E\left[
        \left|\o^{(k)} \cap B\cap \wh{\Gamma}\right|^2
    \right],
\end{align*}
where the inner sum is over all $1\times 1$ boxes $B$ partitioning $[0,n]^2$.
We bound the RHS as follows:
\begin{align*}
    \sum_{k=1}^{\log n}
    \frac{n^2}{2^{2k}}
    \sum_{B} 
    \E\left[
        \left|\o^{(k)} \cap B\cap \wh{\Gamma}\right|^2
    \right]
    &\le
    \sum_{k=1}^{\log n}
    \frac{n^2}{2^{2k}}
    \E\left[
        \left(\sum_B \left|\o^{(k)}\cap B\cap \wh{\Gamma}\right|\right)^2
    \right]\\
    &= \sum_{k=1}^{\log n}
    \frac{n^2}{2^{2k}}
    \E\left[
        \left|\o^{(k)}\cap \wh{\Gamma}\right|^2
    \right]\\
    &\le \sum_{k=1}^{\log n}
    \frac{n^2}{2^{2k}}
    \E\left[
        \max_{\pi}\left|\o^{(k)}\cap \pi\right|^2    
    \right].
\end{align*}
By the large deviations estimates for Poissonian LPP discussed in Section \ref{iop}, the above second moment is $O(2^k)$ for all $k\in\lb 1,\log n\rb$.
It follows that $\Var(\wh{\cL}_n) \ls \sum_{k=1}^{\log n} \frac{n^2}{2^k} \le n^2$ as claimed.

Next we will estimate the contribution to $\wh{\cL}_n$ coming from small boxes.
Write $s\coloneqq \frac{n}{\log n}$ and partition $[0,n]^2$ into $s\times s$ boxes 
whose sides are parallel to the coordinate axes.
We now show that with high probability, every such box contributes at most $O(n)$
to the last passage time $\wh{\cL}_n$.
Let $R$ be any such box.
Note that by directedness and the non-negativity of the weights, the sum of weights collected by the geodesic of $\wh{\cL}_n$ in $R$ is upper bounded by
\[
    \wh{\cL}_R \coloneqq \max_\pi \sum_{k=1}^{\log  n}
    \frac{n}{2^k} \bigl|\omega^{(k)}\cap \pi\bigr|,
\]
where the maximum is over continuous increasing paths between the bottom-left
and top-right corners of $R$.
It therefore suffices to upper bound $\wh{\cL}_R$.
Towards this, note that for any $m\le \log n$,
\[
    \wh{\cL}_R \le \sum_{k=1}^{m-1}\underbrace{\max_\pi \frac{n}{2^k}\bigl|\omega^{(k)}\cap\pi\bigr|}_{\wh{\cL}_R^{(k)}}
    + \underbrace{\max_\pi \sum_{k=m}^{\log n} \frac{n}{2^k}\bigl|\omega^{(k)}\cap  \pi\bigr|}_{\wh{\cL}_R^{(\ge m)}}.
\]
Choosing $m\coloneqq\log\log n$, we will show that both terms are $O(n)$ using different arguments.
For notational simplicity we assume that $m\in\N$ (the general case follows by rounding).
Note that ${\wh{\cL}_R^{(\ge m)}}$ has the exact same distribution as $\cL_s$, i.e. the last passage time between $(0,0)$ and $(s,s)$ with $\log s$ many layers of the Poisson noise present (recall that all logarithms have base $2$ by convention).
Thus using the same arguments as in \eqref{upper1234} with $n$ replaced by $s$, we get that
\[
    \P\left(
        \wh{\cL}^{(\ge m)}_R
        > Cs\log s 
    \right)
    \le e^{-c\log s}.
\]
Then since $s\log s =O(n)$ and $\log n = O(\log s)$,
we get that
\[
    \P\left(
        \wh{\cL}^{(\ge m)}_R
        > Cn 
    \right)
    \le e^{-c \log n}.
\]
Now a union bound over the $(\log  n)^2$ many boxes $R$ shows that, with high probability, $\wh{\cL}^{(\ge m)}_R\ls n$ for all $R$ simultaneously.

It remains to bound the contribution from the scales $k\in\{1,\dots,m-1\}$.
Consider $\bigl|\omega^{(k)}\cap R\bigr|$ which is a Poisson random variable of mean
$\frac{2^{2k}}{n^2}\cdot\frac{n^2}{(\log  n)^2} = 2^{2(k-m)}$.
Since $k-m<0$, we have the simple tail estimate
\begin{align*}
    \P\left(\bigl|\omega^{(k)}\cap R\bigr| \ge i\right)
    &= e^{-2^{2(k-m)}}\sum_{j=i}^\infty\frac{2^{2j(k-m)}}{j!}
    \ls 2^{2i(k-m)}\quad \text{for all } i\ge 1.
\end{align*}
In particular,
\[
    \P\left(\wh{\cL}^{(k)}_R \ge \frac{n}{2^k}\cdot \frac{2m}{m-k}\right)
    \le \P\left(\bigl|\omega^{(k)}\cap R\bigr| \ge \frac{2m}{m-k}\right)
    \ls 2^{-4m}.
\]
Summing this bound over $k\in\{1,\dots,m-1\}$ and over all $(\log  n)^2=2^{2m}$ many boxes $R$ yields
\[
    \P\left(\text{there exists } R \text{ such that } 
        \sum_{k=1}^{m-1}\wh{\cL}^{(k)}_R 
    \ge \sum_{k=1}^{m-1}\frac{n}{2^k}\cdot \frac{2m}{m-k}\right)
    \ls m2^{-2m} = o(1).
\]
Next, observe that
\begin{align*}
    \sum_{k=1}^{m-1}\frac{n}{2^k}\cdot \frac{2m}{m-k}
    &= \frac{2nm}{2^m}\sum_{k=1}^{m-1}\frac{2^{m-k}}{m-k}
    \ls \frac{nm}{2^m} \cdot \frac{2^{m-1}}{m-1}
    \le n.
\end{align*}
We conclude that $\wh{\cL}_R \le C_1n$ for all $R$ simultaneously with probability $1-o(1)$, for some constant $C_1>0$.
This fact, along with \eqref{08084308}, implies that the event
\[
    \cE\coloneqq\bigl\{\wh{\cL}_R \le C_1n\text{ for all partition boxes } R\bigr\}\cap\bigl\{\wh{\cL}_n \ge \E[\cL_n] - C_0n\bigr\}
\]
satisfies $\P(\cE)\ge \frac12-o(1) \ge \frac14$ for all sufficiently large $n$.

Next we consider the effect of the $0$\textsuperscript{th} scale.
We again partition $[0,n]^2$ into $s\times s$ boxes (recall that $s= \frac{n}{\log n}$).
Let $\wh{\Gamma}$ be a geodesic with respect to $\wh{\cL}_n$.
One can show that, simply by virtue of $\wh{\Gamma}$ being a continuous increasing path from $(0,0)$ to $(n,n)$, there must exist two points $a,b\in\wh\Gamma$ with $a\prec b$ 
such that the rectangle $R_*\coloneqq\Rect(a,b)$ has area at least $s^2/4$ 
and intersects at most four of the $s\times s$ partition boxes (see Figure \ref{fig:lowerbound}).
Then, on the event $\cE$, the Poisson points of scales $k\ge 1$ in $R_*$ together contribute weight at most $4C_1n$ to the last passage time $\wh{\cL}_n$.
On the other hand, using that $\omega^{(0)}$ is independent of $R_*, \cE$, and using that $|R_*|\ge s^2/4$, we obtain
(writing $J\coloneqq \ceil{C_0+4C_1+1}$)
\begin{align*}
    \P\Bigl(\bigl|\omega^{(0)}\cap R_*\bigr| = J,\quad \omega^{(0)}\cap R_* \text{ is ordered},\quad \cE\Bigr)
    &\ge \frac{1}{J!}\, \P\left(\Poi\left(\frac{1}{n^2}\cdot \frac{s^2}{4}\right) = J\right)\cdot \P(\cE) \\
    &\ge \frac{1}{(J!)^2}\,e^{-2^{-2(m+1)}}2^{-2J(m+1)}\cdot \frac14\\
    &\ge \frac{1}{(\log n)^{C}}
\end{align*}
for some $C>0$.
It follows that with probability at least $\frac{1}{\polylog n}$, the geodesic $\wh{\Gamma}$ can be rerouted to collect $J$ many points from $\omega^{(0)}$ thereby increasing its passage time by $Jn$, at the cost of possibly decreasing its passage time coming from the scales $k\ge1$ by at most $4C_1n$, and simultaneously $\wh{\cL}_n \ge \E[\cL_n]-C_0n$.
In other words, with probability at least $\frac{1}{\polylog n}$ the following estimates hold:
\begin{align*}
    \cL_n &\ge \wh{\cL}_n + Jn-4C_1n\\
    &\ge \E[\cL_n]-C_0n + Jn-4C_1n\\
    &\ge \E[\cL_n] + n.
\end{align*}
This proves the proposition.
\end{proof}
\begin{figure}[ht]
    \centering
    \includegraphics[width=0.4\textwidth]{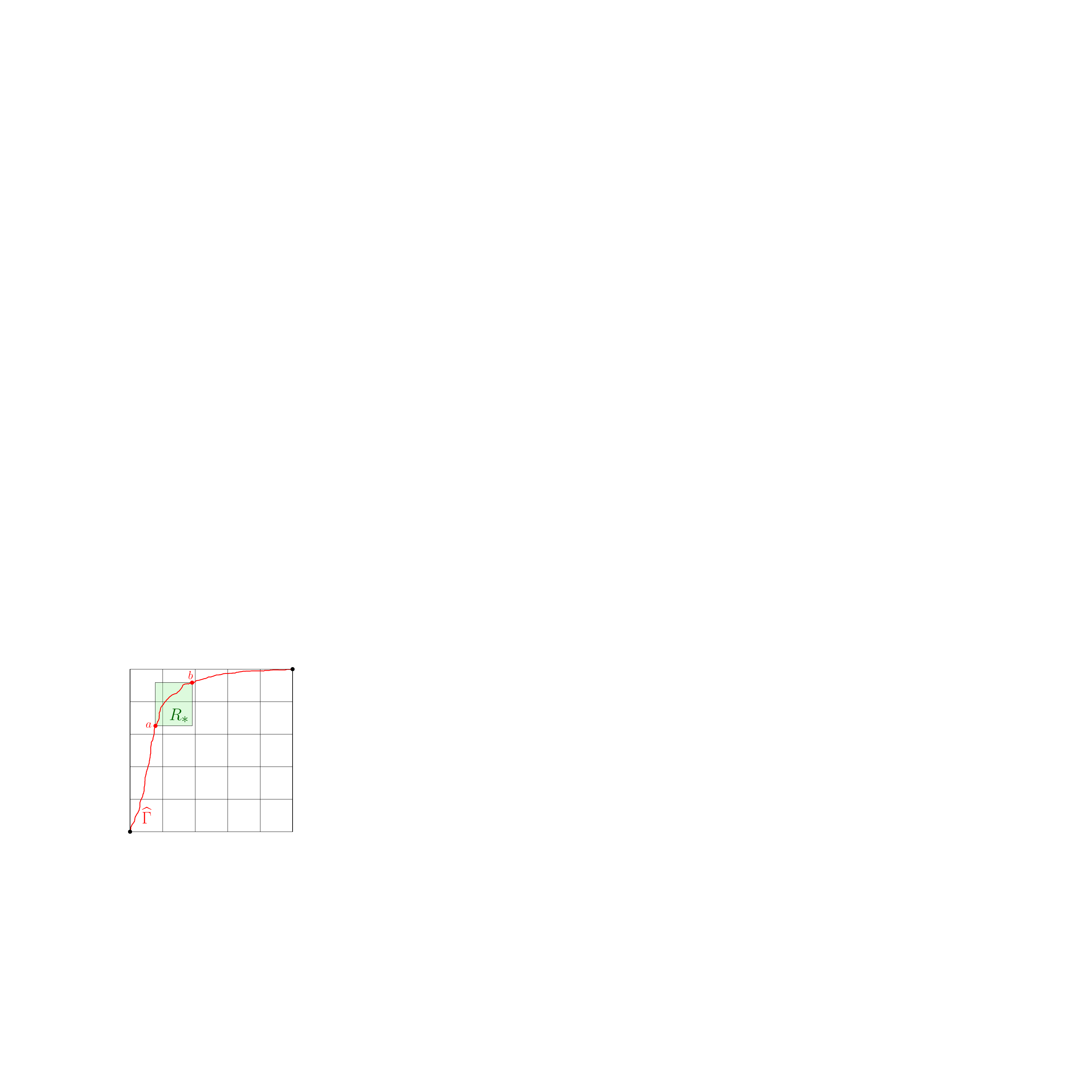}
    \caption{Depicted is $[0,n]^2$ (large black square) partitioned into $s\times s$ boxes (smaller black squares).
    Since the geodesic $\wh{\Gamma}$ (red) is directed, there exist points $a,b\in\wh{\Gamma}$ with $a\prec b$ such that the rectangle $R_*\coloneqq\Rect(a,b)$ (green) has area at least $s^2/4$ and is contained in at most four of the $s\times s$ partition boxes. }\label{fig:lowerbound}
\end{figure}

Next we will implement our multi-scale approach for the branching random walk and prove Theorems \ref{thm:brw} and \ref{thm:conc-brw}.
Along the way we will highlight some of the differences from the hierarchical structure present in the $\al=2$ heavy-tailed environment.

\section{Last passage percolation on the branching random walk}\label{sec:brw}

Throughout this section we use the notation of Section \ref{sec:brw0}.
Our first aim is to prove Theorem \ref{thm:conc-brw}, which recall asserts that $\LBRW$ has sub-Gaussian fluctuations of order $\Theta(n)$.
We will use the following standard results on maxima of Gaussian processes:
\begin{proposition}[Fluctuations of maxima of Gaussian processes]
    \label{borell-TIS}
    Let $S$ be a finite set and let $(W(s))_{s\in S}$ be a centered Gaussian process on $S$.
    Let $\sigma^2 \coloneqq \max_{s\in S} \Var(W(s))$.
    By the Gaussian Poincar\'{e} inequality (e.g. \cite[Theorem 3.20]{BLMConcentrationInequalitiesNonasymptotic2013}), we have
    \begin{align*}
        \Var\left(\max_{s\in S} W(s)\right)
        \le \sigma^2.
    \end{align*}
    Moreover, by the Borell--TIS inequality (e.g. \cite[Theorem 2.1.1]{ATRandomFieldsGeometry2007}), we have that for all $t\ge 0$,
    \begin{align*}
        \P\left(\left|\frac{\max_{s\in S} W(s) - \E[\max_{s\in S} W(s)]}{\sigma}\right| > t\right)
        \le 2e^{-t^2/2}.
    \end{align*}
\end{proposition}

We proceed to the proof of Theorem \ref{thm:conc-brw}.

\begin{proof}[Proof of Theorem \ref{thm:conc-brw}]
    First we prove the upper bounds in \eqref{eq:brw-variance}, \eqref{eq:brw-fluctuations}.
    Let $\Pi_n$ be the set of up-right directed paths $\pi$ from $(0,0)$ to $(n-1,n-1)$, and define
    \begin{align*}
        W_n(\pi) \coloneqq  \sum_{v\in\pi} Y_n(v)
        \quad\text{for } \pi\in\Pi_n,
    \end{align*}
    where $Y_n$ denotes the BRW on $\dmp$.
    Thus $(W_n(\pi))_{\pi\in\Pi_n}$ is a centered Gaussian process on $\Pi_n$, the maximum of which is the last passage time $\LBRW$.
    We will upper bound the fluctuations of $\LBRW$ using 
    Proposition \ref{borell-TIS}.

    Towards applying Proposition \ref{borell-TIS},
    we claim that $\max_{\pi\in\Pi_n}\Var(W_n(\pi))\ls n^2$.
    To see this, fix $\pi\in\Pi_n$ and observe that by directedness, for every $k\in\lb 0,\log n\rb$ and $B\in\BB^{(k)}$, we have that $\left|B\cap\pi\right|\ls \frac{n}{2^k}$.
    Moreover, $\left|B\cap\pi\right|\ne 0$ for at most $O(2^k)$ many $B\in\BB^{(k)}$.
    Therefore, by the independence of the Gaussian variables across boxes, we get that
    \begin{align*}
        \max_{\pi\in\Pi_n}\Var(W_n(\pi))
        =
        \max_{\pi\in\Pi_n}\Var\left(\sum_{k=0}^{\log n} \sum_{B\in\BB^{(k)}} \left|B\cap\pi\right|\xi_B\right)
        &= \max_{\pi\in\Pi_n}
        \sum_{k=0}^{\log n} \sum_{B\in\BB^{(k)}}\left|B\cap\pi\right|^2\\
        &\ls \sum_{k=0}^{\log n} 2^k \cdot \frac{n^2}{2^{2k}}\\
        &\ls n^2
    \end{align*}
    as claimed.
    As a consequence of this pathwise $O(n^2)$ variance bound, the upper bounds in \eqref{eq:brw-variance} and \eqref{eq:brw-fluctuations} now follow from Proposition \ref{borell-TIS}.

    The lower bounds are straightforward since by definition the coarsest scale affects all directed paths by a constant shift in their weights of order $n$.
    To see the details, recall that $\BB^{(0)}$ consists of just a single box, namely $\dmp$.
    Write $\xi_0\coloneqq\xi_{\dmp}$.
    Then we can rewrite the last passage time $L=\LBRW$ as
    \begin{align}\label{1001}
        L = (2n-1)\xi_0
        + \underbrace{\max_{\pi\in\Pi_n} \sum_{k=1}^{\log n} \sum_{B\in\BB^{(k)}}
        \left|B\cap\pi\right| \xi_B}_{\wh{L}},
    \end{align}
    where we used that every directed path $\pi\in\Pi_n$ contains exactly $2n-1$ many vertices,
    and where $\xi_0$ and $\wh{L}$ are independent.
    Independence implies that
    \[
        \Var(L) = \Var((2n-1)\xi_0) + \Var(\wh{L})
        = (2n-1)^2 + \Var(\wh{L}),
    \]
    which implies the variance lower bound in \eqref{eq:brw-variance}.
    Moreover, since $\Var(L)\ls n^2$, it follows that $\Var(\wh{L})\ls n^2$.
    Note that $\E[L]=\E[\wh{L}]$, since $\xi_0$ has mean zero.
    Therefore, by Chebyshev's inequality,
    \[
        \P\left(\left|\frac{\wh{L}-\E[L]}{n}\right|\le C\right) 
        \ge \frac12
        \qquad\text{for some } C>0.
    \]
    Combining this with \eqref{1001} and again using that $\xi_0$ and $\wh{L}$ are
    independent, we obtain
    \begin{align*}
        \P\left(\left|\frac{L-\E[L]}{n}\right|>t\right)
        &\ge \frac{1}{2}\cdot \P\bigl(\xi_0 > C+t\bigr).
    \end{align*}
    Since $\xi_0\sim N(0,1)$, the lower bound asserted in \eqref{eq:brw-fluctuations} follows.
    This proves Theorem \ref{thm:conc-brw}.
\end{proof}

\subsection*{Last passage time lower bound}

We turn now to proving Theorem \ref{thm:brw}, which recall asserts that
\begin{align*}
    \P\left(\LBRW \ge c\,\frac{n(\log n)^{1/2}}{\log\log n}\right)
    \ge 1-e^{-c'\frac{\log n}{(\log\log n)^{2}}}
\end{align*}
and that $\E[\LBRW]\gs \frac{n(\log n)^{1/2}}{\log\log n}$.
We will modify the multi-scale construction underlying Theorem \ref{thm:main} to build a path $\pi$ with the property that 
for many scales $k\in\{0,\dots,\log  n\}$, the sum $\sum_{B\in\BB^{(k)}}\left|B\cap \pi\right|\xi_B$ has a large positive mean, implying the lower bound for $\E[\LBRW]$.
The concentration established in Theorem \ref{thm:conc-brw} will then yield the probability bound.

The construction involves considering a tree of possible path trajectories, see Figure \ref{fig:brw}. 
As alluded to earlier, for reasons that will be clear shortly, we will only consider scales at a certain separation $\sss$.
The $\ell\textsuperscript{th}$ level of the tree corresponds to the trajectory or the skeleton of the path $\pi$ at scale $\ell\sss$. Given the latter, we decide the skeleton at the $(\ell+1)\textsuperscript{th}$ level in the following way. Inside each box of size $\frac{n}{2^{\ell\sss}}$ intersecting the trajectory decided so far,  
we construct two \emph{disjoint} sequences of boxes $B\in\BB^{(\ell+1)\sss}$, 
each encoding a possible trajectory (at resolution $\smash{\frac{n}{2^{(\ell+1)\sss}}}$) for the path $\pi$.
The trajectory maximizing the sum $\sum_{B\in\BB^{(\ell+1)\sss}}\left|B\cap \pi\right|\xi_B$ is chosen to be the true trajectory of $\pi$. 
Note that the contribution to the overall weight of the path at this scale by any one of the candidate trajectories is simply a sum of i.i.d. mean zero Gaussians (one for each box $B$) and hence has overall mean zero.
Taking the best of the two trajectories is what gives us a non-trivial mean.
However, to make the trajectories pass through disjoint randomness we will need to sacrifice some control over the slope of the path at each scale, more than what we had to in the 
heavy-tailed $\al=2$ setting.
This is manifested in the fact that, unlike the $\al=2$ case where the scale separation $\sss$ was simply to ensure concentration properties lending flexibility in its choice, in the BRW case choosing an optimal separation is crucial to get the best possible {exponent for the logarithmic corrections in the lower bound. The exponent} nonetheless turns out to be smaller (namely $\frac12$) than was obtained in Theorem \ref{thm:main}.
We start by redefining some of the quantities from \eqref{eq:recalled}.
Let  
\begin{equation*}
    \sss  \coloneqq  \log \log n \qquad\text{and}\qquad \MM\coloneqq \frac{\log n}{\sss}.
\end{equation*}
We emphasize that choosing $\sss=c  \log \log n$ for some  $c\ne 1$ will be suboptimal.
Here it is important to recall that all logarithms have base $2$.
For notational simplicity we will assume that $\sss,\MM\in\N$.

We now describe the basic inductive construction, which will be a recipe for constructing a skeleton of a path from the bottom-left corner to the top-right corner of a square of size $\frac{n}{2^{\ell\sss}}\times \frac{n}{2^{\ell\sss}}$, for any $\ell\in\lb 0, \MM-1\rb$.
This is analogous to the construction given in Section \ref{sec:multiscale} but with some key differences.
To avoid repetition we will mostly focus on pointing out the differences. 
The initial analysis will be in terms of the mean which will involve a counterpart to Lemma \ref{lem:cancellations} and the discussion thereafter.
Then having constructed a path whose weight sum has large mean, we will deduce the result via the concentration bound of Theorem \ref{thm:conc-brw}, which plays a similar role as Proposition \ref{prop:largefrac}.

\begin{figure}[ht]
    \centering
    \begin{subfigure}[t]{0.45\textwidth}
        \includegraphics[width=\linewidth]{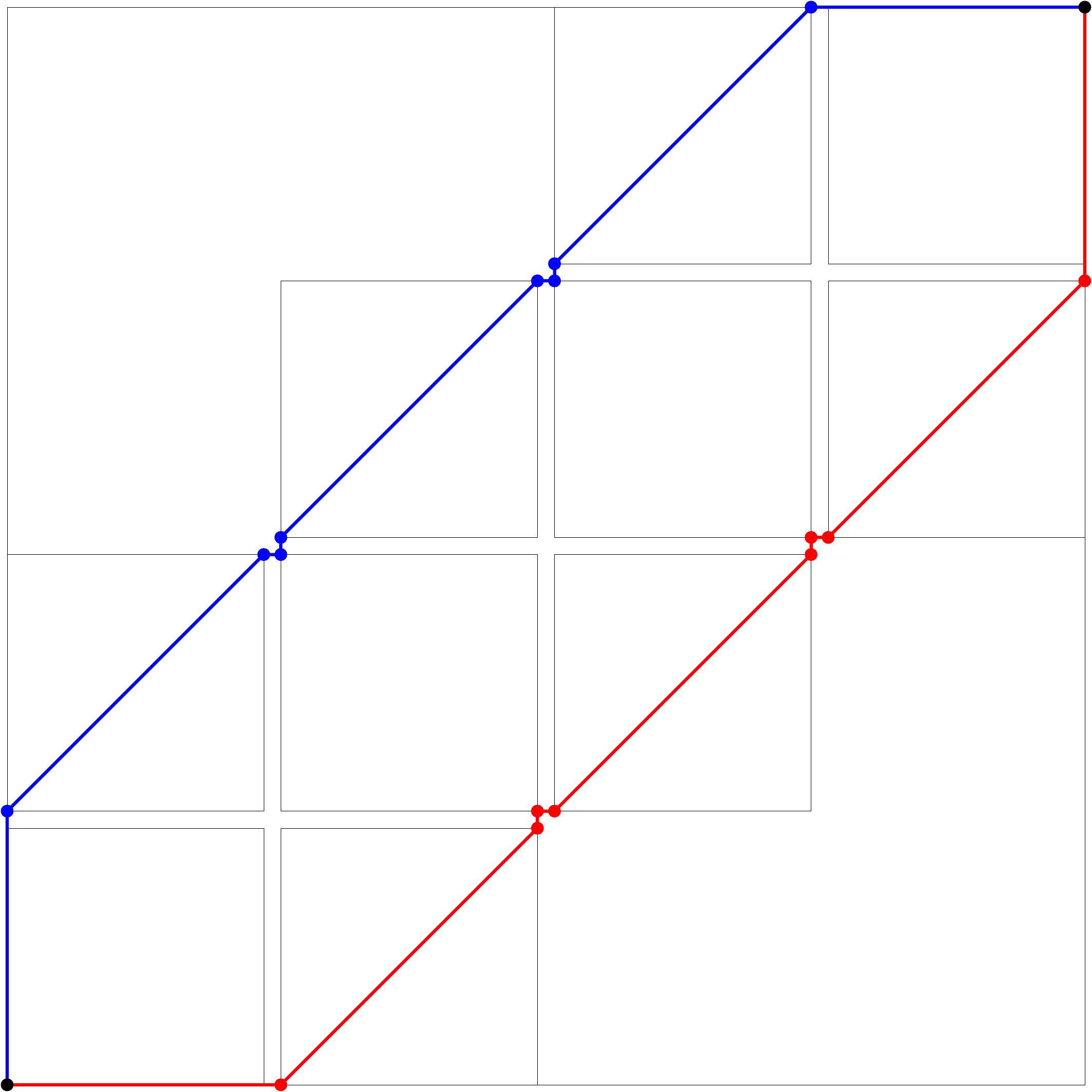}
    \end{subfigure}
    \hfill
    \begin{subfigure}[t]{0.45\textwidth}
        \includegraphics[width=\linewidth]{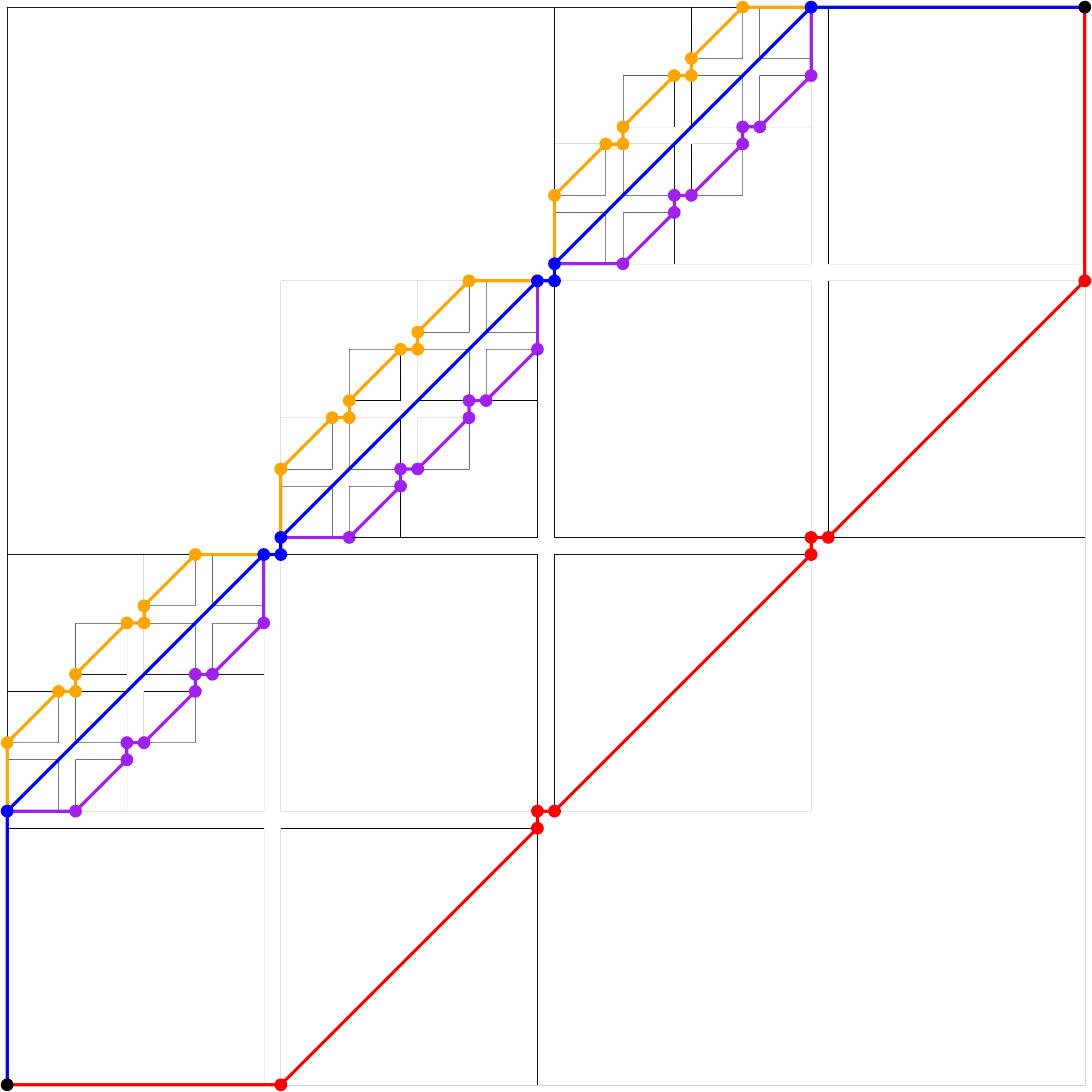}
    \end{subfigure}
    \caption{The multi-scale construction underlying Theorem \ref{thm:brw}.\\
    \textbf{Left:}
    The large black square is $\lb0,\frac{n}{2^{\ell\sss}}-1\rb^2$ which has side length $\frac{n}{2^{\ell\sss}}$,
    and the smaller squares inside have side length
    $\frac{n}{2^{(\ell+1)\sss}}$ (as drawn, $2^{\sss}=4$ and $\frac{n}{2^{\ell\sss}} = 64$).
    Here ``side length" refers to the number of lattice points on each side.
    The up-skeleton consists of the two black points and the
    eight blue points, and the down-skeleton consists of the two black
    points and the eight red points.
    The blue broken line is a schematic drawing of a path
    containing the up-skeleton, and similarly for the red broken line.
    \emph{Any} directed path containing either skeleton takes exactly
    $\frac{n}{2^{(\ell+1)\sss}}$ many steps in the bottom-left and top-right squares,
    and exactly $1$ step in the other two squares intersecting the diagonal $y=x$.
    Moreover, \emph{any} directed path containing the up-skeleton takes
    exactly $2\frac{n}{2^{(\ell+1)\sss}}-1$ many steps in each of the three
    squares above the diagonal, and similarly for the down-skeleton and
    the three squares below the diagonal.
    We select the skeleton for which the sum of BRW variables at
    scale $(\ell+1)\sss$ is larger.
    Since the two triplets of off-diagonal squares are disjoint, the corresponding BRW variables $\xi_B$ are independent of each other, and thus the expected maximum weight sum is of the same order as the fluctuations of the sum of BRW variables along each individual skeleton (see \eqref{909}).
    Note that the slope between the bottom-left and top-right corner of each off-diagonal square is $1$.
    Moreover, the slope between all other consecutive points on the skeletons is $0$ or $\infty$, which by directedness completely determines the trajectory of the path between them.
    Writing $m^{(\ell+1)}$ for the number of squares of side length
    $\frac{n}{2^{(\ell+1)\sss}}$ in which the path has slope $1$,
    we have that $m^{(\ell+1)} = (2^\sss - 1)m^{(\ell)}$.\\
    \textbf{Right:}
    Depicted here is the situation where the up-skeleton is the maximizer in the
    left figure.
    We repeat the same construction as in the left figure in each of the squares
    of side length $\frac{n}{2^{(\ell+1)\sss}}$ whose bottom-left and top-right 
    corners are given by consecutive points in the maximizing skeleton.
    That is, in each such square we consider an up-skeleton (orange) and
    a down-skeleton (purple), now at scale $(\ell+2)\sss$, and we choose
    the skeleton which maximizes the corresponding sum of BRW variables
    at scale $(\ell+2)\sss$.
    }\label{fig:brw}
\end{figure}

\subsection*{Step 1: multi-scale construction}
Fix $\ell\in \lb 0, \MM-1\rb$ and write $s\coloneqq\frac{n}{2^{(\ell+1)\sss}}$.
Consider the square $R \coloneqq \lb 0,\frac{n}{2^{\ell\sss}}-1\rb^2$
and the following sequence of ordered points in it:
\begin{align*}
    (0,0)
    \preceq 
    (0,s)
    &\prec 
    (s-1, 2s-1)
    \preceq 
    (s, 2s-1)
    \preceq
    (s,2s)\\
    &\prec 
    (2s-1, 3s-1)
    \preceq
    (2s, 3s-1)
    \preceq
    (2s,3s)\\
    &\prec\\
    &\;\;\vdots\\
    &\prec
    ((2^{\sss}-1)s-1, 2^{\sss}s-1)
    \preceq (2^{\sss}s-1, 2^{\sss}s-1)
    =\left(\frac{n}{2^{\ell\sss}}-1, \frac{n}{2^{\ell\sss}}-1\right).
\end{align*}
We call this collection of points the \emph{up-skeleton}, 
and call its reflection across the diagonal $y=x$ the \emph{down-skeleton}.
See the left side of Figure \ref{fig:brw} for an illustration of the skeletons.

Consider any up-right directed path $\pi$ from $(0,0)$ to $(\frac{n}{2^{\ell\sss}}-1,\frac{n}{2^{\ell\sss}}-1)$ containing one of the two skeletons.
We analyze the weight collected by $\pi$ from the BRW at scale $(\ell+1)\sss$ within $R$, which is given by the sum
\[
    \sum_{\substack{B\in\BB^{((\ell+1)\sss)}\\B\subset R}}\left|B\cap \pi\right|\xi_B.
\]
By the definition of the skeletons, $\pi$ passes through exactly $2^{\sss+1}-1$ many distinct squares $B\in\BB^{((\ell+1)\sss)}$ during its journey through $R$.
We call these squares \emph{diagonal} or \emph{off-diagonal} according to whether they intersect the diagonal line $y=x$.
Notice that no matter which skeleton $\pi$ contains, it passes through every diagonal square, of which there are $2^\sss$ many.
Moreover, by directedness $\pi$ takes exactly $\frac{n}{2^{(\ell+1)\sss}}$ many steps inside the bottom-left square and inside the top-right square, and exactly $1$ step inside all the other diagonal squares.
Also, the other $2^\sss-1$ many squares through which $\pi$ passes are off-diagonal, and by directedness $\pi$ takes exactly $2\frac{n}{2^{(\ell+1)\sss}}-1$ many steps in each of them.
Note that since the latter squares are off-diagonal, they are disjoint from their reflections across the line $y=x$.
We label the diagonal squares in increasing order as $B_1\prec B_2\prec\cdots\prec B_{2^{\sss}}$, so $B_1$ is the bottom-left square and $B_{2^{\sss}}$ is the top-right square.
We label the off-diagonal squares along $\pi$ as $B_1^\Box,\dots,B_{2^{\sss}-1}^\Box$, where $\Box\in\{\up,\down\}$ according to which skeleton $\pi$ contains.
From all this discussion we obtain the decomposition 
\begin{align}\label{908}
    \sum_{\substack{B\in\BB^{((\ell+1)\sss)}\\B\subset R}}\left|B\cap \pi\right|\xi_B
    &= \underbrace{\frac{n}{2^{(\ell+1)\sss}}\bigl(\xi_{B_1} + \xi_{B_{2^\sss}}\bigr) + \sum_{i=2}^{2^{\sss}-1}\xi_{B_i}}_{W}
    + \underbrace{\left(2\frac{n}{2^{(\ell+1)\sss}}-1\right)\sum_{j=1}^{2^{\sss}-1}\xi_{B_j^\Box}}_{Z^\Box}.
\end{align}
Observe that $W,Z^{\up},Z^{\down}$ are independent Gaussian random variables with mean zero.
Moreover,
\begin{align}\label{907}
    \Var(Z^\Box)
    = \left(2\frac{n}{2^{(\ell+1)\sss}}-1\right)^2 (2^{\sss}-1)
    \gs \frac{n^2}{2^{(2\ell+1)\sss}}.
\end{align}
It follows that the expected maximum weight from the scale $(\ell+1)\sss$ collected in $R$ by any path containing one of the two skeletons is given by
\begin{align}\label{909}
    \E\bigl[\max\{W+Z^{\up},\, W + Z^{\down}\}\bigr]
    = \E\bigl[\max\{Z^{\up},\, Z^{\down}\}\bigr]
    \gs 
    \frac{n}{2^{(\ell+\frac12)\sss}},
\end{align}
where we used the fact that the expected maximum of two i.i.d. $N(0,\sigma^2)$ random variables is lower bounded by $c\sigma$ for some universal constant $c>0$.

\subsection*{Step 1$'$: iteration}
We now iterate the construction just described.
Let $\cF^{(0)}$ be the trivial $\sigma$-algebra, and for $\ell\ge 1$ let $\cF^{(\ell)}$ be the $\sigma$-algebra generated by the BRW at scales $\sss,2\sss,\dots,\ell\sss$, i.e. by the random variables $\xi_B$ for $B\in\bigcup_{\ell'=1}^\ell \BB^{(\ell'\ss)}$.
For the base case $\ell=0$ we set $\VV^{(0)}\coloneqq \left\{(0,0), (n-1,n-1)\right\}$.
Suppose that for some $\ell\in\lb 0,\MM-1\rb$ 
we have specified the skeletons up to scale $\ell \sss$ 
and in particular let 
\[
    \VV^{(\ell)} = \left\{\vv^{(\ell)}_1\preceq\cdots\preceq \vv^{(\ell)}_{|\VV^{(\ell)}|}\right\}
\]
be an $\cF^{(\ell)}$-measurable set of vertices such that
for each consecutive pair $\vv^{(\ell)}_i\preceq \vv^{(\ell)}_{i+1}$, one of the following holds:
\footnote{By $\Slope(a,b)=0$ we mean that the line segment joining $a$ to $b$ is horizontal, and by $\Slope(a,b)=\infty$ we mean that the same line segment is vertical.}
\begin{itemize}
    \item $\Slope\bigl(\vv^{(\ell)}_i, \vv^{(\ell)}_{i+1}\bigr) \in \{0,\infty\}$ \quad and \quad $\norm{\vv^{(\ell)}_i - \vv^{(\ell)}_{i+1}}_1\in\left\{1, \frac{n}{2^{\ell\sss}}\right\}$,\qquad or
    \item $\Slope\bigl(\vv^{(\ell)}_i, \vv^{(\ell)}_{i+1}\bigr) = 1$ \quad and\quad $\norm{\vv^{(\ell)}_i - \vv^{(\ell)}_{i+1}}_1 = 2(\frac{n}{2^{\ell\sss}}-1)$.
\end{itemize}
In other words, each $\Rect\bigl(\vv^{(\ell)}_i, \vv^{(\ell)}_{i+1}\bigr)$ is either a degenerate rectangle (i.e. a horizontal or vertical line) of length $1$ or $\frac{n}{2^{\ell\sss}}$,
or a square 
with $\frac{n}{2^{\ell\ss}}$ many lattice points on each side.
Note that  $\VV^{(0)}\coloneqq\{(0,0), (n-1,n-1)\}$ indeed satisfies the above properties.

Conditional on $\cF^{(\ell)}$, we apply the construction from the previous subsection in each rectangle $\Rect\bigl(\vv^{(\ell)}_i, \vv^{(\ell)}_{i+1}\bigr)$ with slope $1$ 
(see the left side of Figure \ref{fig:brw}, where the black points correspond to $\vv^{(\ell)}_i$ and $\vv^{(\ell)}_{i+1}$).
For each such rectangle, we add the points in the maximizing skeleton to $\VV^{(\ell)}$, thus obtaining an $\cF^{(\ell+1)}$-measurable vertex set $\VV^{(\ell+1)}$ with the same properties as in the above bullet points with $\ell$ replaced by $\ell+1$ (see Figure \ref{fig:brw}).
This completes the induction step.

Having selected skeletons at every scale $0,\sss,\dots,\MM\sss$, 
we choose an up-right path $\pi$ containing all the skeletons in some arbitrary $\cF^{(\MM)}$-measurable way.
This measurability assumption is just for convenience: it implies that $\pi$ is independent of the BRW at all scales  
$k\not\in\{\sss,2\sss,\dots,\MM\sss\}$,
which will simplify the later analysis. For instance, we can take $\pi$ to be the leftmost path passing through all the points specified by the skeletons.

\subsection*{Step 2: counting rectangles}
Consider $\pi$ as above.
Let $m^{(\ell)}$ be the number of rectangles of slope $1$ demarcated by vertices in $\VV^{(\ell)}$.
Then \eqref{909}  shows that the expected contribution made by the BRW at scale $(\ell+1)\sss$ to the sum of weights along $\pi$ is at least
\begin{align}\label{900}
    m^{(\ell)}\cdot\frac{n}{2^{\ell\sss+\sss/2}},
\end{align}
up to a multiplicative constant.
We can evaluate $m^{(\ell)}$ using the recursion
\[
    m^{(\ell)} = \bigl(2^{\sss}-1\bigr) m^{(\ell-1)},
\]
where the factor $2^\sss-1 = (2^{\sss+1}-1)-2^\sss$ accounts for the squares of size $\frac{n}{2^{\ell\sss}}\times \frac{n}{2^{\ell\sss}}$ 
in which the path takes just one step or has slope $0$ or $\infty$
(see Figure \ref{fig:brw}).
Since $m^{(0)}=1$,
we get that
\begin{equation}\label{901}
    \begin{split}
        m^{(\ell)} = \bigl(2^{\sss}-1\bigr)^{\ell}
        &= 2^{\ell\sss}\bigl(1-2^{-\sss}\bigr)^{\ell}\\
        &\ge 2^{\ell \sss}\,2^{-\frac{10\ell}{2^{\sss}}},
    \end{split}
\end{equation}
where we used that $\sss\ge 1$ and the crude bound $(1-\frac{1}{x})^x\ge 2^{-10}$ when $x\ge 2$.
This yields the following lower bound for \eqref{900}:
\begin{align*}
    m^{(\ell)}\cdot\frac{n}{2^{\ell\sss+\sss/2}}
    &\ge \frac{n}{2^{\sss/2}}\,2^{-\frac{10\ell}{2^{\sss}}}.
\end{align*}
Summing over 
$\ell\in\lb 0, \MM-1\rb$
and using that $1-2^{-10x} \ls x$ for $x>0$
now yields
\begin{align*}
    \sum_{\ell=0}^{\MM-1} m^{(\ell)}\cdot \frac{n}{2^{\ell\sss+\sss/2}}
    \ge \sum_{\ell=0}^{\MM-1} \frac{n}{2^{\sss/2}}\,2^{-\frac{10\ell}{2^{\sss}}}
    &= \frac{n}{2^{\sss/2}} \sum_{\ell=0}^{\frac{\log n}{\sss}-1} 2^{-\frac{10\ell}{2^{\sss}}}\\
    &=\frac{n}{2^{\sss/2}}\cdot \frac{1-2^{-\frac{10}{2^{\sss}}\frac{\log n}{\sss 2^{\sss}}}}{1-2^{-\frac{10}{2^{\sss}}}}\\
    &\gs \frac{n}{2^{\sss/2}}\cdot \frac{1-2^{-\frac{10}{2^{\sss}}\frac{\log n}{\sss}}}{\frac{1}{2^{\sss}}}\\
    &\gs \frac{n}{2^{\sss/2}} \cdot \min \left\{{2^\sss}, \frac{\log n}{\sss}\right\}.
\end{align*}
This is where the need to choose $\sss$ optimally becomes apparent. A quick computation reveals that, up to lower order terms which we ignore, $\sss=\log\log n$ is indeed the optimal choice (recall that all logarithms have base $2$ by convention).
For this choice of $\sss$, the above is at least  $c\,\frac{n(\log n)^{1/2}}{\log\log n}$ for some constant $c>0$.

\subsection*{Step 3: concluding the proof}
Let $\pi$ be the path constructed above.
We decompose its weight sum $\sum_{v\in\pi}Y(v)$ as:
\[
    \sum_{v\in\pi}Y(v)
    =
    \underbrace{\sum_{\ell=1}^{\MM}\sum_{B\in\BB^{(\ell\sss)}}\left|B\cap\pi\right| \xi_B}_{L_1}
    \quad+\quad
    \underbrace{
        \sum_{k\not\in\{\sss,2\sss,\dots,\MM\sss\}}\sum_{B\in\BB^{(k)}}\left|B\cap\pi\right| \xi_B
    }_{L_2}.  
\]
We argued above that there exists $c>0$ such that for all sufficiently large $n$,
\begin{align*}
    \E[L_1] \ge c\,\frac{n(\log n)^{1/2}}{\log\log n}.
\end{align*}
Also, since $\pi$ is $\cF^{(\MM)}$-measurable, it follows that $L_2$ is a mean
zero Gaussian random variable, and in particular $\E[L_1+L_2]=\E[L_1]$.
Since $\LBRW \ge L_1+L_2$, we deduce that for all sufficiently large $n$,
\begin{align*}
    \E[\LBRW] \ge c\,\frac{n(\log n)^{1/2}}{\log\log n},
\end{align*}
which is \eqref{eq:brw-mean}.
It follows by Theorem \ref{thm:conc-brw} (viz. \eqref{eq:brw-fluctuations}) that for all sufficiently large $n$,
\begin{align*}
    \P\left(
        \LBRW \ge \frac{c}{2}\,\frac{n(\log n)^{1/2}}{\log\log n}
    \right)
    \ge 1-e^{-c'\frac{\log n}{(\log\log n)^2}},
\end{align*}
which proves \eqref{eq:brw-hp}.
This completes the proof of Theorem \ref{thm:brw}.\qed
\\

In this final section we discuss extensions of our results to LPP in hierarchical environments in higher dimensions.
\section{Higher dimensions}\label{sec:higher-dimensions}

We will consider LPP on $\Z^{d+1}$ for $d\ge 1$: for vertex weights $(X(v))_{v\in\Z^{d+1}}$, the last passage time from $(0,\dots,0)$ to $(n,\dots,n)$ is defined as
\[
    L_n  \coloneqq  \max_\pi \sum_{v\in\pi}X(v),
\]
where the maximum is over directed paths in $\Z^{d+1}$ starting at $(0,\dots,0)$ and ending at $(n,\dots,n)$.
Note that in all the upcoming results the various constants depend implicitly on $d$.

\subsection*{Critical heavy-tailed LPP}
Consider LPP on $\Z^{d+1}$ with i.i.d. weights whose common distribution is non-negative and satisfies
\[
    \P(X>t)\sim Ct^{-\al}
\]
for some $\al>0$.
As discussed in the introduction, Hambly and Martin
\cite{HMHeavyTailsLastpassage2007} analyzed this model with $d=1$ and $\al\in(0,2)$.
Their techniques and results admit natural analogues for any $d\ge1$ and $\al\in(0,d+1)$, see \cite[Section 8]{HMHeavyTailsLastpassage2007}.
The critical value $\al=d+1$ is determined by the same heuristic ``Flory argument'' as in the case $d=1$ presented in the introduction.
Briefly, it is expected that at criticality the geodesic is fully delocalized and the KPZ scaling relation holds, up to possible logarithmic corrections. 
The largest weight in $\lb 0,n\rb^{d+1}$ is of order $n^{\frac{d+1}{\al}}$ which according to the Flory heuristic dictates fluctuations, i.e. $\chi=\frac{d+1}{\al}$.
Since the KPZ scaling relation also holds it follows that $\chi=1$ and hence $\al=d+1$.

Assuming $\al=d+1$, a straightforward modification of our techniques yields the following $(d+1)$-dimensional analogue of Theorem \ref{thm:main}.
\begin{theorem}[Logarithmic correction lower bound for $\al=d+1$]\label{thm:main-d}
    There exists $c>0$ such that for all sufficiently large $n$,
    \[
        \P\left(
            L_n \ge c\,\frac{n(\log n)^{\frac{d+2}{2(d+1)}}}{\log\log n}
        \right)
        \ge 1-e^{-(\log n)^{97}}.
    \]
\end{theorem}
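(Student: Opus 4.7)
The plan is to adapt the multi-scale construction of Section~\ref{sec:multiscale} to $(d+1)$ dimensions. We keep the scale separation $\ss=100\log\log n$ and the number of scales $\MM=\floor{\log n/(10\ss)}$, but replace $\JJ=(\log n)^{1/4}$ with $\JJ=(\log n)^{d/(2(d+1))}$; this choice is tuned so that $\MM\cdot n/\JJ\asymp n(\log n)^{(d+2)/(2(d+1))}/\log\log n$, producing exactly the claimed lower bound. The scale of a vertex is defined as before, and by the $\al=d+1$ tail the density of scale-$k$ vertices is $\asymp 2^{(d+1)k}/n^{d+1}$, so that a box of volume $V$ contains on average $V\cdot 2^{(d+1)k}/n^{d+1}$ many scale-$k$ vertices.

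Three geometric inputs need to be updated. First, a $(d+1)$-dimensional box $R$ has diagonal neighborhood of volume $\asymp r^d|R|$ rather than $r|R|$. Accordingly we take cylinder width $r\asymp\rr^{1/d}/\JJ^{(d+1)/d}$, with $\rr\asymp(\log\log n)^{d/2}$ a slowly-growing parameter, so that the expected number of scale-$(\ell+1)\ss$ vertices in each cylinder is of order $\rr\to\infty$, and the Bernoulli concentration estimate underlying Proposition~\ref{prop:largefrac} carries through. Second, the higher-dimensional analogue of Pick's theorem (the lattice-point count of a convex polytope differs from its volume by at most a constant times its surface area) yields the counterpart of Lemma~\ref{lem:lattice_count}. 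Third, and most importantly, Lemma~\ref{lem:cancellations} is replaced by: if positive reals $x_{i,j}$ for $i\in\lb 1,m\rb$, $j\in\lb 1,d+1\rb$ satisfy $\sum_i x_{i,j}=x_j$ for each $j$, and every aspect ratio $x_{i,j}/x_{i,k}$ lies within a factor of $1\pm\d$ of $x_j/x_k$, then
\[
    \sum_{i=1}^m\bigl(x_{i,1}\cdots x_{i,d+1}\bigr)^{1/(d+1)}
    \ge(1-C\d^2)\bigl(x_1\cdots x_{d+1}\bigr)^{1/(d+1)}.
\]
After rescaling to set each $x_j=1$, this reduces to the statement $(z_1\cdots z_{d+1})^{1/(d+1)}\ge(1-C\d^2)\frac{z_1+\cdots+z_{d+1}}{d+1}$ whenever the $z_j$ all lie within $1\pm\d$ of one another, which follows from a Taylor expansion of AM--GM at its equality point; as in the 2D case, the quadratic error in $\d$ is essential.

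With these ingredients the inductive construction of Sections~\ref{sec:multiscale}--\ref{sec:vertex_sets_large} proceeds mutatis mutandis. The aspect-ratio accumulation across scales is controlled by $(1+2r)^{\MM}$, which remains sub-polynomial in $n$ because $r\MM\asymp\rr^{1/d}(\log n)^{1/2}/\log\log n=o(\log n)$, while the cancellation step multiplies the per-scale contribution by $(1-Cr^2)^{\MM}=\Theta(1)$, since $r^2\MM\asymp\rr^{2/d}/\log\log n=O(1)$ under our choice of $\rr$. The main obstacle is not any single argument---no new conceptual ingredient appears---but rather the notational bookkeeping: formulating the $(d+1)$-dimensional analogues of Lemmas~\ref{lem:slopebound} and~\ref{lem:apriori}, where ``slope'' is replaced by the $(d+1)$-tuple of aspect ratios of a box and the separation hypothesis~\eqref{eq:separation-hyp} must hold coordinate-wise, adds overhead but no new difficulty.
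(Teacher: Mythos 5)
Your proposal is correct and takes essentially the same approach as the paper, which itself only sketches the higher-dimensional argument by pointing to the obvious analogues of Lemma~\ref{lem:apriori}\ref{apriori_slope}, Proposition~\ref{prop:largefrac}, and Lemma~\ref{lem:cancellations}, and by specifying (as you do) the cylinder dimensions $(\log n)^{-1/(2(d+1))}$ by $(\log n)^{d/(2(d+1))}$. Your bookkeeping---$\JJ=(\log n)^{d/(2(d+1))}$, $r\asymp\rr^{1/d}/\JJ^{(d+1)/d}$, the reduction of the $(d+1)$-variable cancellation inequality to a pointwise AM--GM Taylor estimate, and the verification that $r\MM$ is sub-polynomial while $r^2\MM=O(1)$---is exactly what makes the exponent $\frac{d+2}{2(d+1)}$ come out, and the proof of the pointwise inequality (the second-order correction $-\frac{1}{2(d+1)}\sum(z_j-1)^2+\frac12\bar{u}^2\ge -C\d^2$ by convexity) is sound.
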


The exponent $\frac{d+2}{2(d+1)}$ in Theorem \ref{thm:main-d} is determined by the obvious 
$(d+1)$-dimensional analogues of Lemma \ref{lem:apriori}\ref{apriori_slope},
Proposition \ref{prop:largefrac}, and Lemma \ref{lem:cancellations}.
In particular, recalling Figure \ref{fig:iop}, the analogous cylinders have bases with side length ${(\log n)^{-\frac{1}{2(d+1)}}}$ and height $(\log n)^{\frac{d}{2(d+1)}}$.

The next result whose proof is the same as that of Theorem \ref{thm:conc} asserts that the critical last passage time is concentrated in all dimensions.
\begin{theorem}[Concentration for $\al=d+1$]\label{thm:conc-d}
    There exist $C,t_0>0$ such that for all $t\ge t_0$ and all $n\ge1$,
    \[
        \P\left(\left|\frac{L_n-\E[L_n]}{n}\right| > t\right) \le \frac{C}{t^{4/3}}.
    \]
    Note that by Theorem \ref{thm:main-d}, there exists $c>0$ such that
    \[
        \E[L_n] \ge c\,\frac{n(\log n)^{\frac{d+2}{2(d+1)}}}{\log\log n}
    \]
    for all sufficiently large $n$ and hence the fluctuations of $L_n$ around its mean are of strictly smaller order than its mean. 
\end{theorem}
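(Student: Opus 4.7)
The plan is to mimic the Efron--Stein argument that proved \cref{thm:conc}, with the various exponents adjusted to the $(d+1)$-dimensional tail $\P(X>t)\sim Ct^{-(d+1)}$. First I would truncate the weights at level $K^{d+1}n$, setting $\wt{X}(v)\coloneqq X(v)\1_{X(v)\le K^{d+1}n}$ and letting $\wt{L}_n$ denote the corresponding last passage time; the parameter $K\ge1$ will be tuned as a function of $t$ at the end. The tail assumption gives $\P(X(v)\ne\wt{X}(v))\lesssim K^{-(d+1)^2}n^{-(d+1)}$, so a union bound over the $(n{+}1)^{d+1}$ vertices yields $\P(L_n\ne\wt{L}_n)\lesssim K^{-(d+1)^2}$.

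Next I would bound $\Var(\wt{L}_n)$ via Efron--Stein. Exactly as in \cref{sec:conc}, perturbing a single weight $\wt{X}(v)$ changes $\wt{L}_n$ by at most $\wt{X}(v)\1_{v\in\Gamma}$ (where $\Gamma$ is a geodesic), so summing gives $\Var(\wt{L}_n)\le\E\bigl[\max_\pi\sum_{v\in\pi}\wt{X}(v)^2\bigr]$. The key auxiliary input is the $(d+1)$-dimensional analogue of Martin's LPP bound: for any non-negative i.i.d.\ weights $Y(v)$,
\[
\E\Bigl[\max_\pi\sum_{v\in\pi}Y(v)\Bigr]\lesssim n\int_0^\infty\P(Y>t)^{1/(d+1)}\,\mrm{d}t,
\]
which in turn rests on the fact that LPP with i.i.d.\ $\Ber(p)$ weights from $(0,\dots,0)$ to $(n,\dots,n)$ in $\Z^{d+1}$ has expectation $O(np^{1/(d+1)})$. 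Applying this to $Y=\wt{X}^2$, whose tail satisfies $\P(\wt{X}^2>t)\lesssim t^{-(d+1)/2}$ for $t\le(K^{d+1}n)^2$, the integrand becomes $\lesssim t^{-1/2}$ and the integral evaluates to $\lesssim K^{d+1}n$, so that $\Var(\wt{L}_n)\lesssim K^{d+1}n^2$.

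Combining Chebyshev's inequality with the above two ingredients and the mean comparison $0\le\E[L_n]-\E[\wt{L}_n]\lesssim n\,K^{-d(d+1)}$ (obtained, as in \cref{sec:conc}, by dyadic decomposition of $X\1_{X>K^{d+1}n}$), I would deduce that for $t$ above a fixed threshold,
\[
\P\!\left(\frac{|L_n-\E[L_n]|}{K^{(d+1)/2}n}>t\right)\lesssim\frac{1}{t^2}+\frac{1}{K^{(d+1)^2}}.
\]
Balancing the two right-hand terms by setting $K^{(d+1)^2}=t^2$ and then substituting $s\coloneqq tK^{(d+1)/2}=t^{(d+2)/(d+1)}$ yields $\P(|L_n-\E[L_n]|/n>s)\lesssim s^{-2(d+1)/(d+2)}$. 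Since $2(d+1)/(d+2)\ge 4/3$ for every $d\ge1$, the stated bound $C/s^{4/3}$ follows a fortiori.

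Conceptually, nothing new is required beyond dimension-dependent exponent tracking. The main technical obstacle I foresee is recording carefully the $(d+1)$-dimensional Martin-type inequality above: the underlying Bernoulli-LPP estimate $O(np^{1/(d+1)})$ is classical (obtainable by Poissonization and subadditivity, or via the longest chain in random posets), but one should either locate a precise reference (for instance in \cite{MarLinearGrowthGreedy2002,MarLimitingShapeDirected2004}) or include a short self-contained derivation. Everything else is a direct transcription of the two-dimensional argument.
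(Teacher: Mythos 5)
Your proposal is correct and follows precisely the approach the paper intends: the paper states that \cref{thm:conc-d} has ``the same proof'' as \cref{thm:conc}, and your dimension-dependent transcription (truncation, Efron--Stein via the $(d+1)$-dimensional Martin bound $\E[\max_\pi\sum_{v\in\pi}Y(v)]\lesssim n\int_0^\infty\P(Y>t)^{1/(d+1)}\,\mrm{d}t$, Chebyshev, and optimization over $K$) is exactly that. Your bookkeeping even shows the stronger decay $s^{-2(d+1)/(d+2)}$, from which the stated uniform $4/3$ exponent (the worst case, attained at $d=1$) follows for all $d\ge1$.
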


\subsection*{LPP on the branching random walk}
We turn next to the $(d+1)$-dimensional analogues of our results for LPP on BRW (Theorems \ref{thm:brw} and \ref{thm:conc-brw}).
The BRW on $\Z^{d+1}$ is defined analogously to the case $d=1$: for each $k\in\lb0,\log_2n\rb$ we partition $\lb 0,n-1\rb^{d+1}$ into $2^k\times\cdots\times 2^k$ boxes and associate to each box an independent standard Gaussian random variable, and define the BRW to be the field whose value at a vertex $v$ is the sum of the Gaussian variables corresponding to the boxes containing $v$. 
We denote by $\LBRW$ the corresponding last passage time.
The proofs of Theorems \ref{thm:brw} and \ref{thm:conc-brw} (given in Section \ref{sec:brw}) go through essentially verbatim for any $d\ge1$, yielding identical conclusions:
with high probability,
\[
    \LBRW \ge c\,\frac{n(\log n)^{1/2}}{\log\log n}.
\]
Moreover, $\LBRW$ is concentrated around its mean with sub-Gaussian fluctuations of order $\Theta(n)$, and in fact $\Var(\LBRW)=\Theta(n^2)$.

\begin{remark}
    It is interesting to note that for all $d\ge1$ the exponent $\frac{d+2}{2(d+1)}$ appearing in Theorem \ref{thm:main-d} is strictly larger than its BRW counterpart $\frac12$, and moreover as $d\to\infty$ we have $\frac{d+2}{2(d+1)}\downarrow \frac12$ (cf. the discussion in Section \ref{iop} on the discrepancy between the lower bounds for $d=1$).
\end{remark}

\begin{remark}\label{rem:dgz}
    As mentioned earlier, Ding, Gwynne, and Zhuang \cite{DGZPercolationThickPoints2026} 
    recently studied percolation properties of the set of thick points of a log-correlated field in high dimensions.
    As the reader may already realize, such results are closely related to the subject of this article.
    We elaborate on this further below while referring the reader to \cite{DGZPercolationThickPoints2026} for more details and other interesting results.

    For the BRW, the results of \cite{DGZPercolationThickPoints2026} imply that for all sufficiently large $d$, as $n\to\infty$ it holds with probability $1-o(1)$ that there exists a path $P$  from $(0,\dots,0)$ to 
    $\{v\in\Z^{d+1}:\|v\|_1=n\}$
    such that all but a vanishing fraction of the BRW weights along $P$ have values of order $\log n$. Crucially, the path $P$ is not directed.
    Note that the existence of a directed path with the same properties as $P$ would imply that $\LBRW\gs n\log n$ in sufficiently high dimensions, matching the trivial upper bound $\LBRW \ls n\log n$
    (recall that the latter follows from the fact that the maximum of the BRW is $O(\log n)$ in every dimension).
    However, the set of vertices 
    with weights of order $\log n$
    is known to exhibit a fractal structure, 
    which heuristically suggests that any path contained in that set must also be fractal and hence should dramatically fail to be directed.  
    The following result to this effect was proved by Ding and Zhang \cite[Theorem 1.2]{DZLiouvilleFirstPassage2019} in the setting of the two-dimensional GFF:
    for any $\e>0$, it holds with high probability that for any path with $O(n)$ many vertices, at least a constant fraction of the weights along it are upper bounded by $\e\log n$.
    Although \cite[Theorem 1.2]{DZLiouvilleFirstPassage2019} is formulated only for the two-dimensional GFF (primarily due to the implications for two-dimensional Liouville quantum gravity), its proof does not rely on planarity and so a similar result is expected for log-correlated fields in every dimension.
    A similar statement valid in every dimension was proved by \cite{BJJ+FractalPercolationUnrectifiable2021} in the setting of fractal percolation, a model of a random fractal set that is a natural proxy for the thick point set of a log-correlated field
    (see also \cite{ChaAbsenceDirectedFractal1995,ChaLengthShortestCrossing1996,CPPNoDirectedFractal1997,OrzLowerBoundBoxcounting1998} for more qualitative results in $d=1$ relying on planarity).
    In forthcoming joint work of the first two authors with Kaihao Jing \cite{GGJ}, we obtain strong quantitative bounds to this effect in a related discrete model.

    The aforementioned path construction of \cite{DGZPercolationThickPoints2026} is in spirit quite similar to ours (Section \ref{sec:brw}), aside from the lack of directedness.
    In view of this and the above discussion, finding the true growth rate of $\LBRW$ and the dependence of the latter on the underlying dimension remains an intriguing open problem that we leave to future work.
\end{remark}

\subsection*{A distribution with finite $(d+1)\textsuperscript{th}$ moment and superlinear last passage time}

We close by discussing the $(d+1)$-dimensional analogue of
Theorem \ref{thm:secondmoment} and Proposition \ref{prop:real-secondmoment}.
For LPP on $\Z^{d+1}$ with i.i.d. non-negative weights with common distribution $X$, Martin \cite{MarLinearGrowthGreedy2002,MarLimitingShapeDirected2004} proved that the condition
\begin{align*}
    \int_0^\infty \P(X>t)^{\frac{1}{d+1}}\,\mrm{d}t < \infty
\end{align*}
implies that $g\coloneqq\limsup_{n\to\infty}\frac{L_n}{n}<\infty$.
Note that the above condition is stronger than $\E[X^{d+1}]<\infty$.
Moreover, \cite{CGGKGreedyLatticeAnimals1993} showed that $\E[X^{d+1}]<\infty$
is necessary for $g<\infty$.
The next result, like Theorem \ref{thm:secondmoment} and Proposition \ref{prop:real-secondmoment} and with the same proof, shows that $\E[X^{d+1}]<\infty$ is not sufficient for $g<\infty$.
\begin{theorem}
    Fix $\beta\in(1, \frac{d+2}{2})$ and assume that
    \[
        \P(X>t)\sim Ct^{-(d+1)}(\log t)^{-\beta}.
    \]
    Then $\E[X^{d+1}]<\infty$, and the last passage time satisfies
    \[
        \P\left(
            L_n \ge \frac{n(\log n)^{\frac{d+2}{2(d+1)} - \frac{\beta}{d+1}}}{\log\log n}
        \right)
        \ge 1-e^{-(\log n)^{97}}.
    \]
    In particular,
    \[
        \frac{L_n}{n}\to \infty \quad\text{almost surely}.
    \]
\end{theorem}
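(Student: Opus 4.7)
The plan is to follow the proof of \cref{prop:real-secondmoment} verbatim, with the $(d+1)$-dimensional multi-scale machinery underlying \cref{thm:main-d} in place of its planar counterpart. Finiteness of $\E[X^{d+1}]$ is immediate from the tail assumption:
\[
\E[X^{d+1}] = (d+1)\int_0^\infty t^d\,\P(X>t)\,dt \lesssim \int_2^\infty \frac{dt}{t(\log t)^\beta} < \infty,
\]
since $\beta>1$.

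For the superlinear lower bound, I would first modify the scale definition of \cref{def:kscale} to absorb the logarithmic correction in the tail by declaring $v\in\Z^{d+1}$ to be of \emph{scale $k$} if
\[
\frac{n}{2^k\bigl(\log(n/2^k)\bigr)^{\beta/(d+1)}} < X(v) \le \frac{n}{2^{k-1}\bigl(\log(n/2^k)\bigr)^{\beta/(d+1)}}.
\]
A direct calculation using $\P(X>t)\sim Ct^{-(d+1)}(\log t)^{-\beta}$ shows that the log factors chosen in the cutoffs are precisely what is needed to cancel the log factor in the tail, yielding
\[
\P(v\textup{ is of scale }k) \gtrsim \frac{2^{(d+1)k}}{n^{d+1}}
\]
for all sufficiently large $n$ and all $k\le\tfrac{1}{10}\log n$. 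This is the sole tail input used by the multi-scale construction underlying \cref{thm:main-d}, so that construction applies verbatim to produce (with probability at least $1-e^{-(\log n)^{97}}$) nested vertex sets $\VV^{(0)}\subset\cdots\subset\VV^{(\MM)}$ lying on a common up-right directed path from $(0,\dots,0)$ to $(n,\dots,n)$, with the same increment cardinality lower bounds that drive \cref{thm:main-d}. Since each vertex of scale $\ell\sss$ now carries weight at least $\frac{n}{2^{\ell\sss}(\log n)^{\beta/(d+1)}}$ in place of $\frac{n}{2^{\ell\sss}}$, the same summation that produces the exponent $\frac{d+2}{2(d+1)}$ in \cref{thm:main-d} now yields
\[
L_n \ge \sum_{\ell=1}^{\MM}\sum_{v\in\VV^{(\ell)}\setminus\VV^{(\ell-1)}} X(v) \gtrsim \frac{n(\log n)^{\frac{d+2}{2(d+1)}-\frac{\beta}{d+1}}}{\log\log n}
\]
on the same event. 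The hypothesis $\beta<\frac{d+2}{2}$ makes the log-exponent strictly positive, and almost-sure divergence $L_n/n\to\infty$ then follows by Borel--Cantelli since $\sum_n e^{-(\log n)^{97}}<\infty$: for any fixed $M>0$ the event $\{L_n<Mn\}$ is eventually contained in the failure event of the above bound.

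The only non-routine ingredient is the higher-dimensional multi-scale construction underlying \cref{thm:main-d}, which is taken as given from the excerpt; with that in hand, the present theorem reduces to the bookkeeping above, exactly parallel to how \cref{prop:real-secondmoment} is deduced from \cref{prop:V}.
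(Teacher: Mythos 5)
Your proposal is correct and follows exactly the argument the paper intends: the paper states this theorem is proved "with the same proof" as \cref{thm:secondmoment,prop:real-secondmoment}, i.e., by modifying the definition of scales to absorb the logarithmic tail correction, verifying that the resulting tail estimate matches \eqref{eq:tail}, and then running the $(d+1)$-dimensional multi-scale construction of \cref{thm:main-d} verbatim. Your bookkeeping of the exponents and the Borel--Cantelli deduction are also consistent with what the paper does.
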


\emergencystretch=1em
\printbibliography

\end{document}